\documentclass[msom,non-blindrev]{informs3} 
\OneAndAHalfSpacedXI 


\usepackage{endnotes}
\usepackage[ruled]{algorithm}
\usepackage{algorithmicx,algpseudocode,amsfonts,subfigure,optidef}
\usepackage{graphicx,hyperref,comment,appendix}
\hypersetup{%
    pdfborder = {0 0 0}
}
\usepackage[capitalize,noabbrev]{cleveref}
\usepackage{tikz}
\usepackage{multirow,makecell}
\usepackage{subfigure,caption,microtype,booktabs} 

\usepackage{amsthm,amsmath,amssymb,soul}
\usepackage{comment,enumitem,diagbox}
\usepackage{subcaption}
\setlist{leftmargin=*}

\newtheorem{theorem}{Theorem}

\newtheorem{lemma}[theorem]{Lemma}
\newtheorem{proposition}[theorem]{Proposition}
\newtheorem{remark}[theorem]{Remark}
\theoremstyle{definition}
\newtheorem{example}[theorem]{Example}

\newtheorem{assumption}[theorem]{Assumption}

\newcommand{\eg}{\textit{e.g.}}

\usepackage{natbib}
\bibpunct[, ]{(}{)}{,}{a}{}{,}%
\usepackage{bm,comment}


\ECRepeatTheorems

\EquationsNumberedThrough    


\allowdisplaybreaks
\begin{document}


\RUNAUTHOR{}

\RUNTITLE{}

\TITLE{Assortment Optimization under the Multinomial Logit Model with Covering Constraints}

\ARTICLEAUTHORS{Omar El Housni \quad \quad Qing Feng \quad \quad Huseyin Topaloglu \\ School of Operations Research and Information Engineering, Cornell Tech, Cornell University  \\ \EMAIL{\{oe46,qf48,ht88\}@cornell.edu} \URL{}}



\ABSTRACT{ We consider an assortment optimization problem under the multinomial logit choice model with general covering constraints. In this problem, the seller offers an assortment that should contain a minimum number of products from multiple categories. We refer to these constraints as covering constraints. Such constraints are common in practice, as retailers often need to meet service-level agreements with suppliers or simply wish to impose diversity in the assortment to meet different customers’ needs. We consider both the deterministic version, where the seller decides on a single assortment, and the randomized version, where they choose a distribution over assortments. In the deterministic case, we provide a $1/(\log K+2)$-approximation algorithm, where $K$ is the number of product categories, matching the problem's hardness up to a constant factor. For the randomized setting, we show that the problem is solvable in polynomial time via an equivalent linear program. We also extend our analysis to multi-segment assortment optimization with covering constraints, where there are $m$ customer segments, and an assortment is offered to each. In the randomized setting, the problem remains polynomially solvable. In the deterministic setting, we design a $(1 - \epsilon) / (\log K + 2)$-approximation algorithm for constant $m$ and a $1 / (m (\log K + 2))$-approximation for general $m$, which matches the problem’s hardness up to a logarithmic factor. Finally,  we conduct a numerical experiment using real data from an online electronics store, categorizing products by price range and brand. Our findings demonstrate that, in practice, it is feasible to enforce a minimum number of representatives from each category while incurring a relatively small revenue loss. Moreover, we observe that the optimal expected revenue in both deterministic and randomized settings is often comparable, and the randomized setting's optimal solution typically involves only a few assortments.}

\KEYWORDS{Approximation algorithms, Multinomial logit model, Assortment optimization, Covering constraints} 

\maketitle

%

\section{Introduction}

Assortment optimization is a fundamental problem in the revenue management literature. In this class of problems, the decision maker, hereafter referred to as the seller, selects an assortment (a subset of products) from the entire product universe to offer to customers. The goal is to maximize a context-specific objective, such as revenue, profit, or total market share. These problems typically rely on discrete choice models that capture the purchase probability of each product when a particular assortment is offered. Among the various choice models established in the literature, the multinomial logit (MNL) model is arguably the most prevalent due to its simplicity in computing purchase probabilities, its tractability relative to more complex models, and its strong predictive power. Over the past few decades, numerous studies have explored assortment optimization under the MNL model, both in the unconstrained setting and with various practical constraints.

In the context of assortment optimization, we refer to {\em covering constraints} as conditions that require the selection of at least  certain numbers of products from different, possibly overlapping, product categories. These constraints are common in today’s retail industry but remain relatively unexplored in the academic literature. For example, online retailers may need to adhere to service-level agreements with suppliers, mandating that a minimum number of products from specific brands or suppliers be offered to customers. {External regulations may also mandate the seller to guarantee a certain number of products from specific categories to be offered to customers. For example, according to a recent news article\footnote{https://variety.com/2022/digital/global/netflix-30-europe-content-quota-avms-1235286587}, Netflix is required by the European Commission to include a minimum amount of local original productions in their content list. Another example is that in 2023, the French government launched the ``anti-inflation basket" initiative, pushing retailers to offer at least a certain number of everyday products in reduced prices.\footnote{https://www.reuters.com/world/europe/french-government-says-has-deal-anti-inflation-shopping-basket-2023-03-06/} Another common reason of introducing covering constraints is to ensure product diversity, where retailers guarantee a minimum number of products with particular features (e.g., price range, color, size) to be included in their offerings.} In essence, given different product categories, covering constraints guarantee that each product category is sufficiently represented in the assortment, making the assortment more diverse and better able to meet customer preferences for products with certain attributes. This helps to engage a wider market segment. To achieve these objectives, sellers may impose covering constraints based on either product brands or features, ensuring that a minimum number of products from each category is {offered} to customers.

Although covering constraints are common in business practice, the assortment optimization problem with such constraints remains largely unexplored in the literature. Early works in this area, such as \cite{barre2023assortment} and \cite{lu2023simple}, focus on assortment customization across multiple customers, introducing covering constraints at the level of individual products. These studies require each product to be visible to a certain number of customers, but they do not address the more general scenario where covering constraints apply to product categories, each containing multiple products. When the product categories are disjoint, covering constraints are totally unimodular, allowing the problem to be solved in polynomial time, as shown in \cite{sumida2021revenue}. However, in practice, categories often overlap, with a single product belonging to multiple categories (for example, an iPhone can be part of both the smartphone category and the Apple product category). In such cases, covering constraints are not necessarily totally unimodular, making it more challenging to solve the assortment optimization problem with these general constraints.

\subsection{Main Contributions}

We consider assortment optimization under the MNL model with covering constraints. The problem can be classified along two dimensions: the first dimension is deterministic versus randomized, and the second is single-segment versus multi-segment. In the deterministic setting, the seller must offer a single assortment deterministically, whereas in the randomized setting, the seller can randomize over multiple assortments. In the single-segment setting, all customers belong to the same segment and their choices follow the same MNL model. In contrast, in the multi-segment setting, customers are divided into multiple segments, each following a potentially different MNL model, and the seller customizes the assortments offered to each segment. In what follows, we will provide a precise definition of each class of these problems and present our respective contributions for each.

\vspace{3mm}

\noindent\textbf{Single-Segment Problem.} We begin with the single-segment assortment optimization problem with covering constraints, addressing both the deterministic and randomized versions. The main results of the single-segment problem are summarized as follows:


\noindent\underline{\textit{Deterministic Single-Segment Problem:}} In \cref{sec:deterministic_setting}, we consider the deterministic single-segment assortment optimization problem under the MNL model with covering constraints. In this setting, the seller is given multiple product categories, each with an exogenously specified minimum threshold. The seller must select an assortment such that the number of products from each category in the assortment  exceeds the minimum threshold for that category. We first show that it is NP-hard to approximate the problem within a factor of \((1+\epsilon)/\log K\) for any \(\epsilon > 0\), using a reduction from the minimum set cover problem, where \(K\) is the number of categories (Theorem \ref{thm:hardness_single_segment}). Our main technical contribution is the design of a novel algorithm that provides a \(1/(\log K + 2)\)-approximation to this problem (Theorem \ref{thm:single_segment}). Therefore, our  approximation ratio matches the hardness result up to a constant, and is asymptotically tight for large \(K\).  Our algorithm consists of two steps. Given an MNL model in which each product is associated with a preference weight, the first step is to find an assortment that approximately minimizes the sum of the preference weights, subject to the covering constraints. This is done by approximately solving a weighted set cover problem using a greedy algorithm. The second step is to determine the \textit{optimal expansion} of the assortment obtained in the first step, i.e., an assortment that maximizes the total expected revenue and includes the assortment from the first step. 


\noindent\underline{\textit{Randomized Single-Segment Problem:}} In \cref{sec:randomized_setting}, we address the randomized single-segment assortment optimization problem with covering constraints. In this case, the seller selects a distribution over assortments, and the covering constraints must be satisfied in expectation. That is, the expected number of products from each category {  offered} to customers must exceed the minimum threshold of the category. We demonstrate that the randomized single-segment assortment optimization problem can be solved in polynomial time by solving a novel linear program (Theorem~\ref{thm:lp_randomized_single_segment}). To establish our linear program formulation, we first demonstrate that the optimal solution is supported by a sequence of nested assortments. We then show that the covering constraints can be reformulated as linear constraints on purchase probabilities, allowing us to rewrite the problem as a linear program with purchase probabilities as decision variables. 
Furthermore, while the randomized assortment could theoretically randomize over an exponential number of assortments, we prove that there exists an optimal solution in the randomized setting that involves randomizing over at most  \(\min\{K+1,n\}\) assortments. This makes the randomization more practical, as it requires a relatively small number of assortments.
Finally, we show that, in the worst case, the optimal expected revenue in the randomized setting can exceed that of the deterministic setting by an arbitrarily large ratio. However, the practical gap between the two is much smaller, as demonstrated in the numerical experiments discussed in Section \ref{sec:numerical}.

\vspace{3mm}

\noindent\textbf{Multi-Segment Problem.} We further extend our results to the case where we have multiple customer segments and can personalize assortments for each segment, while verifying the covering constraints. Specifically, we assume there are $m$ customer segments, each associated with an arrival probability and a potentially different MNL choice model. The covering constraints are introduced in expectation, based on the arrival probabilities of customer segments. That is, the expected number of products from each category {  offered} to customers must exceed the category's minimum threshold, where the expectation is taken over the arrival probabilities of the customer segments.

In \cref{sec:multi_segment}, we study the deterministic setting for the multi-segment problem. We find that the complexity of the deterministic multi-segment problem depends on the number of customer segments $m$, and thus we distinguish between two cases: the case of constant (small) $m$ and the case of general (large) $m$.

\noindent\underline{\textit{Constant number of customer segments:}} 
For a constant number of customer segments, we introduce a $(1-\epsilon)/(\log K+2)$-approximation algorithm with runtime polynomial in the input parameters for constant $m$. Similar to our algorithm for the deterministic single-segment problem, our approach consists of the following two steps. First, we find a set of assortments for all customer segments that minimizes the {\em customer-segment-weighted sum} of preference weights, subject to covering constraints. This is achieved by approximately solving a weighted set cover problem across all customer segments using a greedy algorithm. Then, we determine the optimal expansion of the assortments obtained in the first step for each customer segment. We show that, with appropriate weights across different customer segments, the algorithm yields a $1/(\log K+2)$-approximation to the problem. 
Since the appropriate weights for each customer segment depend on the true optimal assortments, which are unknown, we use a grid search to guess the appropriate weights. This results in a $(1-\epsilon)/(\log K+2)$-approximation algorithm with a runtime that is polynomial in the number of products and categories, and in $1/\epsilon$, but exponential in $m$ (Theorem \ref{thm:multi-segement-constant-m}). The approximation ratio we obtain in the constant $m$ case also matches the hardness result for the single-segment problem up to a constant.

\noindent\underline{\textit{General number of customer segments:}} 
The runtime of our algorithm in the previous setting (Theorem \ref{thm:multi-segement-constant-m}) remains exponential in $m$, making it suitable for small $m$ but impractical for general $m$. To overcome this limitation, we examine the complexity of the multi-segment assortment problem for arbitrary $m$ and investigate approximation algorithms with runtimes that are polynomial in both in $m$ and the other problem inputs. However, for general $m$, we prove that the problem is NP-hard to approximate within a factor of $\Omega(1/m^{1-\epsilon})$ for any $\epsilon > 0$ (Theorem \ref{thm:hardness_multi_segment}), using a reduction from the maximum independent set problem. We then propose a straightforward algorithm that achieves a $1/m(\log K + 2)$-approximation (Theorem \ref{thm:algo-multi-segement-general-m}). This approximation ratio matches the hardness result up to a logarithmic factor of $K$ in the general case.

\noindent\underline{\textit{Randomized multi-segment problem:}} 
The randomized setting of the multi-segment problem is analogous to that of the single-segment problem. As in the randomized single-segment problem, we can solve the randomized multi-segment problem in polynomial time by formulating it as an equivalent linear program. Specifically, we use the same representation of {  offer probabilities of products} via purchase probabilities, introduced in the randomized single-segment problem, and reformulate the multi-segment problem as a linear program where the decision variables are the purchase probabilities for each customer segment. The detailed analysis of the multi-segment problem in the randomized setting is provided in Appendix \ref{sec:multi_segment_randomized}.

In summary, we classify the problems across two dimensions: deterministic versus randomized, and single-segment versus multi-segment. We provide approximation algorithms and establish hardness results for the deterministic problems, and show that the randomized problems are polynomially solvable. A summary of our results is presented in \cref{table:summary_results}.


\begin{table}[!ht]
\centering
\begin{tabular}{|c|c|c|c|c|}
\hline
\multicolumn{2}{|c|}{}&\multicolumn{2}{c|}{Deterministic}&\multirow{2}{*}{Randomized}\\
\cline{3-4}
\multicolumn{2}{|c|}{}&Approximation&Hardness
&~\\
\hline
\multicolumn{2}{|c|}{Single-Segment}&$1/(\log K+2)$&$(1+\epsilon)/\log K$&\multirow{3}{*}{\makecell{Polynomial-time\\solvable}}\\
\cline{1-4}
\multirow{2}{*}{\makecell{Multi-\\Segment}}&Constant $m$&$(1-\epsilon)/(\log K+2)$&$(1+\epsilon)/\log K$&~\\
\cline{2-4}
~&General $m$&$1/m(\log K+2)$&$\Omega(1/m^{1-\epsilon})$&~\\
\hline
\end{tabular}
\caption{Summary of our algorithmic results}
\label{table:summary_results}
\end{table}

\noindent\textbf{Numerical Studies.} In \cref{sec:numerical}, we conduct a numerical study using real data to address the following questions: (i) What is the impact on the seller's revenue due to the introduction of covering constraints in practice? (ii) What is the revenue gap between the deterministic and randomized settings? (iii) How many assortments does the optimal solution randomize over in the randomized setting?
We use the E-commerce dataset \cite{ECommerceData}, which contains transaction records from April 2020 to November 2020 from a large online home appliances and electronics store. Our numerical experiments are performed across $13$ product types. For each product type, we calibrate an MNL model based on the offered assortments and purchase records. Products are classified into categories by price and brand, with a uniform minimum number of representatives enforced across all categories. 
We solve both the deterministic and randomized versions of the problem under the MNL model, using preference weights calibrated from the data and applying the minimum representative constraints for each price and brand category. The results show that introducing covering constraints leads to only a minimal revenue loss compared to the unconstrained optimum in both deterministic and randomized settings, with losses typically under $5\%$. Moreover, the optimal revenues in the deterministic and randomized settings are close in practice. In many cases, the optimal randomized solution offers a single assortment, while in most other cases, it randomizes over at most three assortments. {  We also observe that as we enforce stronger covering constraints, the total expected revenue from high revenue products decreases, while the total expected revenue from low revenue products decreases. This suggests that in general, low revene products would benefit from introducing stonger covering constraints.}

{  In Appendix~\ref{sec:numerical_synthetic}, we further conduct numerical experiments on synthetic instances to test the performance of our approximation algorithm of the deterministic single-segment problem. The results show that under the synthetic instances, our algorithm can often achieve a much better approximation ratio than its theoretical approximation ratio guarantee. We also compare the performance of Algorithm~\ref{alg:single_segment}~with two heuristics, one of which returns the union of locally revenue-ordered assortments of all categories and the unconstrained optimal assortment, and the other returns an optimal expansion of the union of locally revenue-ordered assortments. The results show that our algorithm outperforms the two heuristics under a broad range of test instances.}

\subsection{Related Literature}

Our work is related to three streams of literature: the MNL model, assortment optimization problem under the MNL model, and assortment optimization problem with visibility and fairness constraints.

\noindent\textbf{Multinomial Logit Model:} For decades, the MNL model has been arguably one of the most popular choice model in modeling customer choices. The MNL model was first introduced by~\cite{luce1959individual} and followed by~\cite{mcfadden1972conditional}, and has received attention in economics and operations research literature due to its simplicity in computing choice probabilities, its predictive power, and its significant computational tractability in corresponding decision problems. Many generalized variants of the MNL have also been established, such as the nested logit model (see \citealt{williams1977formation}, \citealt{mcfadden1980econometric}), mixture of MNL models (see \citealt{mcfadden2000mixed}), and the general attraction model (see \citealt{gallego2015general}).

\noindent\textbf{Assortment Optimization under the MNL Model:} Assortment optimization under the MNL model has been studied extensively in the revenue management literature. \cite{talluri2004revenue} are among the first to consider assortment optimization under the MNL model and they showed that the unconstrained optimal assortment under the MNL model is an assortment containing all products whose revenue is above a certain threshold, often referred to as a revenue-ordered assortment. Thus, the unconstrained assortment optimization problem under the MNL model can be solved in polynomial time by finding the revenue-ordered assortment that maximizes the expected revenue. \cite{rusmevichientong2010dynamic} established a polynomial-time algorithm that solves the assortment optimization problem under the  MNL with cardinality constraints. \cite{desir2022capacitated} further considered the assortment optimization problem under the MNL model with capacity constraints. They showed that the capacitated assortment optimization under the MNL model is NP-hard, and they provided an FPTAS for the problem. As an extension of \cite{rusmevichientong2010dynamic}, \cite{sumida2021revenue} further considered the assortment optimization under the MNL model with totally unimodular constraints. They showed that the problem can be reformulated into an equivalent linear program, thus can be solved in polynomial time. Other research works that consider assortment optimization problems under different settings of the MNL model  include \cite{mahajan2001stocking}, \cite{gao2021assortment}, \cite{el2023joint}, \cite{housni2023maximum}.

\noindent\textbf{Assortment Optimization with Visibility and Fairness Constraints:} 
In recent years, there has been growing interest in assortment optimization with fairness and visibility constraints. The works of \cite{barre2023assortment} and \cite{lu2023simple} are the most closely related to ours, as they are among the first to explore assortment optimization under visibility constraints (also referred to as fairness constraints in \cite{lu2023simple}). Both papers address an assortment optimization  aiming to ensure that each product is visible to a minimum number of customers, and therefore achieving some notion of fairness across the offered products. Specifically, \cite{barre2023assortment} study deterministic assortment optimization over a finite set of customers, requiring each product to be offered to at least a certain number of customers and the assortment can be customized to each customer. On the other hand, \cite{lu2023simple} adopt a randomized approach, ensuring that the probability of each product being visible to customers exceeds a certain threshold.
To the best of our knowledge, our paper is the first to investigate assortment optimization under the MNL model with general covering constraints. {  Our covering constraints are more general than  those in \cite{barre2023assortment} and \cite{lu2023simple}, as they capture the special case where each category corresponds to a single product.
In particular, our randomized version of the problem is a strict generalization of the baseline problem in \cite{lu2023simple}. 
Moreover, our problem is significantly more challenging than the baseline models in \cite{barre2023assortment} and \cite{lu2023simple} which are solvable in polynomial time, while we demonstrate that it is NP-hard to approximate our problem within a factor of $(1+\epsilon)/\log K$ for any $\epsilon>0$, even in the deterministic setting with a single customer. Both \cite{barre2023assortment} and \cite{lu2023simple} also consider extensions beyond their baseline models that enforces cardinality constraints over the offered products. Both extensions are shown to be NP-hard, and are not captured by our model.}

Broadly, our work also contributes to the literature on assortment optimization with fairness constraints. Another notable work in this area is \cite{chen2022fair}, which takes a different approach by defining fairness constraints as ensuring similar outcomes (e.g., revenue, market share, visibility) for products with similar features.

\section{Deterministic Single-Segment Assortment Optimization}\label{sec:deterministic_setting}

In this section, we formally introduce the deterministic single-segment assortment optimization problem with covering constraints. We study the complexity of the problem and prove that it is NP-hard to approximate within a factor of $(1+\epsilon)/\log K$ for any $\epsilon>0$, where $K$ is the number of categories. Then we establish an approximation algorithm that gives an approximation ratio of $1/(\log K+2)$, thereby matching the hardness of the problem.

Let $\mathcal{N}=\{1,2,\dots,n\}$ be the universe of products. Each product $i\in\mathcal{N}$ has a revenue $r_i$. The seller chooses an assortment, i.e., a subset of products $S\subseteq\mathcal{N}$, to offer to customers. The option of not purchasing any product is represented symbolically as product $0$, and refereed to as the no-purchase option.

We assume that  customers make choices based on the MNL model. Under this choice model, each product $i\in\mathcal{N}$ is associated with a preference weight $v_i>0$ capturing the attractiveness of the product. Without loss of generality, the preference weight of the no-purchase option is normalized to be $v_0=1$. Under the MNL model, if assortment $S$ is offered, the probability that a customer purchases product $i$ for some $i\in S$ is given by
\begin{equation*}
\phi(i,S)=\dfrac{v_i}{1+\sum_{j\in S}v_j}.
\end{equation*}
The probability of no-purchase is given by $\phi(0,S)={1}/({1+\sum_{i\in S}v_i})$. The expected revenue gained from a customer that is offered assortment $S$ is given by
\begin{equation}
R(S)=\sum_{i\in S}r_i\phi(i,S)=\dfrac{\sum_{i\in S}r_iv_i}{1+\sum_{i\in S}v_i}.\label{eq:revenue_mnl}
\end{equation}

We assume that the products in the universe are categorized into $K$ possibly overlapping categories $C_1,C_2,\dots,C_K$, where each $C_k$ is a subset of $\mathcal{N}$. Each category $C_k$ is associated with a minimum threshold $\ell_k$, which is an exogenous integer parameter between  $0$ and $ |C_k|$. The offered assortment should include at least $\ell_k$ products that belongs to category $C_k$ for each $k$. In other words, the offered assortment $S$ must satisfy $|S\cap C_k|\geq \ell_k$ for all $k\in\{1,2,\dots,K\}$. We refer to these constraints as covering constraints.

Our goal is to find an assortment that maximizes the expected revenue subject to covering constraints. We refer to the problem as the {\em Deterministic Assortment Optimization with Covering constraints}, briefly~\eqref{prob:deterministic_single_segment}, and it can be formulated as follows:

\begin{equation}
\label{prob:deterministic_single_segment}
\tag{DAOC}
\begin{aligned}
&\max_{S\subseteq\mathcal{N}} && R(S)\\
&\textup{s.t.} && |S\cap C_k|\geq \ell_k,\ \forall k\in\{1,2,\dots,K\}.\\
\end{aligned}
\end{equation}

\subsection{Complexity of \ref{prob:deterministic_single_segment}}

For some special cases, the constraints in~\eqref{prob:deterministic_single_segment}~are totally unimodular, thus \eqref{prob:deterministic_single_segment} can be solved in polynomial time via an equivalent linear program as shown in \cite{sumida2021revenue}. For example, the constraints in~\eqref{prob:deterministic_single_segment}~are totally unimodular if the categories are disjoint from each other. We further prove in Lemma~\ref{lemma:tu_constraints} in Appendix \ref{sec:tu_constraints}  that the covering constraints are still totally unimodular even if the categories can be divided into two groups, where categories from the same group are disjoint from each other. 
This occurs, for example, when products are categorized by two features, where each product has one value for the first feature and one value for the second, and each category corresponds to all products that share the same value for one of the two features.
However, in general, not all covering constraints are totally unimodular. In fact, in the next theorem, we show that it is NP-hard to approximate~\eqref{prob:deterministic_single_segment}~within a factor of $(1+\epsilon)/\log K$ for any $\epsilon>0$. 

\begin{theorem}\label{thm:hardness_single_segment}
	It is NP-hard to approximate~\eqref{prob:deterministic_single_segment}~within a factor of $(1+\epsilon)/\log K$ for any $\epsilon>0$ unless $P=NP$.
\end{theorem}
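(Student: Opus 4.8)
The plan is to reduce from the minimum set cover problem, whose inapproximability within $(1-\delta)\ln d$ (for a ground set of size $d$) is NP-hard by the classical results on set cover (Feige; Dinur and Steurer). Given a set cover instance with ground set $E=\{1,\dots,d\}$ and sets $T_1,\dots,T_q\subseteq E$, I would build a \eqref{prob:deterministic_single_segment} instance as follows. Introduce one \emph{anchor} product $a$ with revenue $r_a=1$ and a small preference weight $v_a=w>0$, and, for each set $T_j$, a \emph{set-product} $p_j$ with revenue $r_{p_j}=0$ and preference weight $v_{p_j}=1$. For each element $e\in E$ create a category $C_e=\{p_j : e\in T_j\}$ with threshold $\ell_e=1$, while the anchor belongs to no category. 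Then $K=d$, and the covering constraints $|S\cap C_e|\ge 1$ for all $e$ say exactly that the set-products contained in $S$ form a set cover of $E$; hence the feasible assortments are, on their set-product part, precisely the set covers.

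Because every $p_j$ has zero revenue, an assortment $S$ has $R(S)=0$ unless it contains the anchor, in which case $R(S)=\frac{w}{1+w+|S\cap\{p_1,\dots,p_q\}|}$, a strictly decreasing function of the number of chosen set-products. The optimal assortment therefore consists of the anchor together with a \emph{minimum} set cover, and
\begin{equation*}
\mathrm{OPT}=\frac{w}{1+w+c^\star},
\end{equation*}
where $c^\star$ is the minimum cover size; more generally any feasible $S$ with positive revenue contains the anchor and induces a cover of size $c\ge c^\star$ with $R(S)=\frac{w}{1+w+c}$. Revenue is thus a monotone, explicitly invertible function of the induced cover size, which is what lets me convert a hardness gap on cover sizes into a hardness gap on revenue.

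To obtain the claimed ratio I would use the gap formulation of set cover in the regime $c^\star\to\infty$: it is NP-hard to distinguish instances with $c^\star\le g$ from instances with $c^\star>(1-\delta)(\ln d)\,g$, where $g=g(d)\to\infty$. In the first case $\mathrm{OPT}\ge \frac{w}{1+w+g}$, while in the second \emph{every} feasible assortment has value below $\frac{w}{1+w+(1-\delta)(\ln d)g}$. A hypothetical $(1+\epsilon)/\log K$-approximation would, in the first case, return a solution of value at least $\frac{1+\epsilon}{\ln d}\cdot\frac{w}{1+w+g}$ (taking $\log K=\ln d$, with $K=d$). Choosing $\delta\in(0,\tfrac{\epsilon}{1+\epsilon})$ and letting $g\to\infty$ makes this return-value threshold strictly exceed the second-case upper bound — the limiting comparison reduces to $(1+\epsilon)(1-\delta)>1$ — so the algorithm would separate the two cases in polynomial time, contradicting $P\neq NP$.

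The main obstacle, and the step needing the most care, is exactly this last translation. The additive constant $1$ from the no-purchase option and the anchor weight $w$ sit in the denominator of $R(S)$ but do not scale with the cover size, so the multiplicative $\ln d$ gap of set cover is reproduced faithfully in the revenue ratio only as the optimal cover grows. I would address this by invoking set-cover hardness in the regime where $c^\star$ (equivalently $g$) tends to infinity — available in the standard constructions, and otherwise enforceable by taking disjoint copies — and by taking $w$ small, so that $1+w$ and any slack are of lower order than $c^\star$; a suitable choice of $\delta$ as a function of $\epsilon$ then yields precisely the factor $(1+\epsilon)/\log K$. A secondary point to verify is that $\log$ denotes the natural logarithm, matching the $\ln d$ of the set-cover bound and the base under which the companion $1/(\log K+2)$ upper bound is stated.
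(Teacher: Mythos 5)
Your proposal is correct and uses the same high-level strategy as the paper's proof: a reduction from minimum set cover in which the covering constraints force the chosen products to encode a cover, a single high-revenue ``anchor'' product makes the optimum pair the anchor with a minimum cover, and the MNL revenue becomes a decreasing function of the cover size (your orientation, products $=$ sets and categories $=$ elements, is just the dual of the paper's, products $=$ elements and categories $=$ sets; in both cases $K$ equals the ground-set size of the hard set-cover instance, so the $\log K$ factors match). The substantive difference is how the additive constants in the MNL denominator are neutralized. The paper gives the cover-encoding products huge preference weights $v_i = M = n/\epsilon_1$ (and the anchor weight $\delta=\epsilon_1/n$ with revenue $M/\delta$), so that every feasible assortment satisfies
\begin{equation*}
\dfrac{1}{(1+\epsilon_1)|S|}\;\leq\; R(S\cup\{n+1\})\;\leq\;\dfrac{1+\epsilon_1}{|S|},
\end{equation*}
i.e.\ the $1+\delta$ term is swamped by $M|S|\ge M$ on \emph{every} instance; this makes the reduction approximation-preserving up to $(1+\epsilon_1)^2$ with no assumption on the size of the optimal cover. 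Your construction uses unit weights, so the $1+w$ term in $\frac{w}{1+w+c}$ is only negligible when the cover size is large, and you must invoke set-cover gap hardness in the regime where the gap location $g$ grows. That regime is indeed available (the Feige/Dinur--Steurer instances have growing optima, and your disjoint-copies padding, or alternatively brute-forcing instances whose optimum is below a constant, makes it rigorous), so your argument goes through; but it carries an extra fact to verify that the paper's weight-scaling sidesteps entirely, which is the main thing you would gain by adopting the paper's parameterization. Your final flagged point is also consistent with the paper: $\log$ there is the natural logarithm (e.g.\ it uses $H_K\le \log K+1$), matching the $\ln$ in the set-cover bound.
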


The proof of \cref{thm:hardness_single_segment}  uses  a reduction from the minimum set cover problem and is  provided in Appendix~\ref{sec:thm:hardness_single_segment}. {  As a side note, we would like to remark that due to the complexity of covering constraints, there is no clear structure for the optimal solution of~\eqref{prob:deterministic_single_segment}. It is a well known property that under the MNL model, given any assortment $S$, adding a product with revenue higher than $R(S)$ will cause the expected revenue to increase, while adding a product with revenue lower than $R(S)$ will cause the expected revenue to decrease. Therefore it is easy to see that the optimal assortment $S^*$ of \eqref{prob:deterministic_single_segment}~contains all products with revenue higher than the optimal value of \eqref{prob:deterministic_single_segment}, since adding these products will not reduce the overall expected revenue and does not violate covering constraints. Furthermore, any product in $S^*$ whose revenue is lower than the optimal value is in the optimal assortment to tightly satisfy certain covering constraints, since otherwise removing the product will increase the overall expected revenue. However, beyond these trivial properties, the structure of the optimal solution of \eqref{prob:deterministic_single_segment} remains largely unclear due to the complexity of covering constraints, presenting challenges to solving \eqref{prob:deterministic_single_segment}.}

\subsection{Approximation Algorithm for  \ref{prob:deterministic_single_segment}}

Our main technical contribution in this section is to provide a $1/(\log K+2)$-approximation algorithm for~\eqref{prob:deterministic_single_segment}, which matches the hardness of the problem given in Theorem \ref{thm:hardness_single_segment} up to a constant factor. Our algorithm consists of two steps. In the first step, we obtain an assortment $\hat{S}$ that approximately minimizes the sum of preference weights subject to the covering constraints, i.e., $\hat{S}$ is an approximate solution to the following weighted set cover problem

\begin{equation}
\label{step1}
\begin{aligned}
&\min_{S\subseteq\mathcal{N}} && \sum_{i \in S} v_i\\
&\textup{s.t.} && |S\cap C_k|\geq \ell_k,\ \forall k\in\{1,2,\dots,K\}.\\
\end{aligned}
\end{equation}
The assortment $\hat{S}$ is obtained by solving the weighted set cover problem using the standard greedy algorithm. That is, we start by setting $\hat{S}=\varnothing$. In each iteration, we add to $\hat{S}$ the product outside $\hat{S}$ that minimizes the ratio of preference weight over the number of categories it belongs to and whose covering constraints are not yet satisfied. In other words, the greedy algorithm adds the product $i$ outside $\hat S$ that minimizes $v_i/c_i$ where $c_i=|\{k\in\{1,2,\dots,K\}:\,i\in C_k,\,|C_k\cap \hat{S}|<\ell_k\}|$. Here, we define $v_i/c_i$ to be $\infty$ if $c_i=0$, thus we only add products that satisfy $c_i>0$.
The iterations terminate when $\hat{S}$ satisfies all covering constraints $|\hat{S}\cap C_k|\geq \ell_k$ for all $k\in\{1,2,\dots,K\}$. In the second step, we  expand the assortment $\hat{S}$ optimally such that the expected revenue is maximized. 
Here, an optimal expansion of $\hat{S}$ refers to an assortment  with the maximum expected revenue over all assortments that contain $\hat S$. Lemma 3.3 in \cite{barre2023assortment} shows that the optimal expansion of $\hat{S}$ is the union of $\hat{S}$ and a revenue-ordered assortment, thus the optimal expansion of $\hat{S}$ can be solved in polynomial time. The summary of our algorithm is provided in~\cref{alg:single_segment}.

\begin{algorithm}[!ht]
\SingleSpacedXI
\caption{Approximation algorithm for~\eqref{prob:deterministic_single_segment}}
\label{alg:single_segment}
\begin{algorithmic}
\State Initialize $\hat{S}\leftarrow\varnothing$
\While{there exists $k\in\{1,2,\dots,K\}$ such that $|\hat{S}\cap C_k|<\ell_k$}
\State Set $c_i=|\{k\in\{1,2,\dots,K\}:\,i\in C_k,\,|C_k\cap \hat{S}|<\ell_k\}|$ for all $i\in\mathcal{N}\backslash\hat{S}$
\State Set $i^*=\argmin_{i\in \mathcal{N}\backslash\hat{S}} v_i/c_i$, update $\hat{S}\leftarrow \hat{S}\cup\{i^*\}$
\EndWhile
\State Set $\bar{S}=\mathrm{argmax}_{S\supseteq\hat{S}}R(S)$
\State \textbf{return} $\bar{S}$
\end{algorithmic}
\end{algorithm}

{  Before proceeding to the performance guarantee of Algorithm \ref{alg:single_segment}, we would like to first provide some intuition for Algorithm \ref{alg:single_segment}. Intuitively, Algorithm \ref{alg:single_segment} first finds an assortment $\hat{S}$ that satisfies the covering constraints while having minimal impact on the overall expected revenue. Then based on $\hat{S}$, the algorithm adds more products to the assortment to maximize revenue. From the formula of expected revenue given in \eqref{eq:revenue_mnl}, one can see that the sum of preference weights appears in the denominator. A large sum of preference weights in $\hat{S}$ would result in a large denominator in \eqref{eq:revenue_mnl}, which potentially has a negative impact on the total expected revenue. On the other hand, if $\sum_{i\in\hat{S}}v_i$ is small, then the denominator in \eqref{eq:revenue_mnl} will not be excessively large, and the covering constraints will be satisfied with minimal impact on the overall expected revenue. Therefore in the first step, we find an assortment $\hat{S}$ by approximately minimizing the sum of preference weights subject to covering constraints. In the second step, we optimally expand $\hat{S}$ by adding more products to the assortment such that the expected revenue is satisfied.}

In the next theorem, we show our main result in this section where  we prove that the assortment returned by \cref{alg:single_segment} is a $1/(\log K+2)$ approximation to~\eqref{prob:deterministic_single_segment}. 

\begin{theorem}\label{thm:single_segment}
Algorithm~\ref{alg:single_segment}~returns a $1/(\log K+2)$-approximation for~\eqref{prob:deterministic_single_segment}.
\end{theorem}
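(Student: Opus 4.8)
The plan is to compare the revenue of the returned assortment $\bar{S}$ against that of the auxiliary assortment $\hat{S}\cup S^*$, where $S^*$ denotes an optimal solution of~\eqref{prob:deterministic_single_segment}. Since $\bar{S}=\mathrm{argmax}_{S\supseteq\hat{S}}R(S)$ and $\hat{S}\cup S^*\supseteq\hat{S}$, we immediately have $R(\bar{S})\ge R(\hat{S}\cup S^*)$, so it suffices to lower bound $R(\hat{S}\cup S^*)$ in terms of $R(S^*)$. The argument then rests on two ingredients: (i) a bound on the total preference weight $\sum_{i\in\hat{S}}v_i$ accumulated in the greedy step, and (ii) an elementary manipulation of the MNL revenue expression~\eqref{eq:revenue_mnl}. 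The observation linking the two steps of the algorithm is that passing from $S^*$ to $\hat{S}\cup S^*$ can only increase the numerator $\sum_i r_iv_i$ (as $r_i,v_i\ge 0$), while it inflates the denominator by at most $\sum_{i\in\hat{S}}v_i$; hence controlling this latter quantity via (i) is exactly what is needed.

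For ingredient (i), I would invoke the classical analysis of the greedy algorithm for weighted set cover. First note that $S^*$ is feasible for the set cover problem~\eqref{step1}, so its optimal value $\mathrm{OPT}_{\mathrm{SC}}$ satisfies $\mathrm{OPT}_{\mathrm{SC}}\le\sum_{i\in S^*}v_i$. The rule that repeatedly adds the product minimizing $v_i/c_i$, with $c_i$ counting the categories containing $i$ whose constraint is not yet met, is precisely the greedy heuristic applied to the covering instance in which product $i$ covers one unit of demand in each category containing it. Since any product lies in at most $K$ categories, the standard harmonic charging (dual-fitting) argument yields $\sum_{i\in\hat{S}}v_i\le H_K\cdot\mathrm{OPT}_{\mathrm{SC}}\le H_K\sum_{i\in S^*}v_i$, where $H_K=\sum_{j=1}^{K}1/j\le \log K+1$. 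I expect this to be the main obstacle to write carefully: the covering constraints $|S\cap C_k|\ge\ell_k$ form a set \emph{multicover} (category $k$ must be covered $\ell_k$ times) rather than a plain set cover, so the charging argument must track the marginal coverage counts $c_i$ correctly. However, because a product covers each category at most once, the maximum number of demand units a single product can cover is $K$, and the harmonic factor therefore remains $H_K$.

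Writing $A=\sum_{i\in S^*}v_i$, ingredient (i) gives $\sum_{i\in\hat{S}}v_i\le (\log K+1)A$. Then, using $\sum_{i\in \hat S\cup S^*}r_iv_i\ge\sum_{i\in S^*}r_iv_i$ together with $\sum_{i\in\hat S\cup S^*}v_i\le A+\sum_{i\in\hat S}v_i\le(\log K+2)A$, I obtain
\begin{equation*}
R(\hat{S}\cup S^*)=\frac{\sum_{i\in\hat S\cup S^*}r_iv_i}{1+\sum_{i\in\hat S\cup S^*}v_i}\ge\frac{\sum_{i\in S^*}r_iv_i}{1+(\log K+2)A}.
\end{equation*}
Dividing by $R(S^*)=\big(\sum_{i\in S^*}r_iv_i\big)/(1+A)$ yields a ratio of at least $(1+A)/\big(1+(\log K+2)A\big)$, which is at least $1/(\log K+2)$ for every $A\ge 0$, since this is equivalent to $(\log K+2)(1+A)\ge 1+(\log K+2)A$, i.e.\ $\log K+1\ge 0$. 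Chaining with $R(\bar{S})\ge R(\hat{S}\cup S^*)$ gives $R(\bar{S})\ge R(S^*)/(\log K+2)$, which is the claim. The ``$+2$'' thus decomposes transparently as $+1$ from the harmonic overhead of greedy set cover and $+1$ from the denominator inflation incurred by expanding $\hat{S}$ to meet the covering constraints.
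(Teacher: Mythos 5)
Your proof is correct and follows essentially the same route as the paper's: a greedy $H_K$ bound on $\sum_{i\in\hat{S}}v_i$ (with $S^*$ feasible for the set-cover step), followed by lower-bounding the revenue of an expansion of $\hat{S}$ whose numerator dominates $\sum_{i\in S^*}r_iv_i$ and whose denominator is inflated by at most $\sum_{i\in\hat{S}}v_i$. The only cosmetic difference is that the paper packages this second step as Lemma~\ref{lemma:optimal_expansion}, whose proof routes through a knapsack-optimal set $S'$, whereas you simply take the union $\hat{S}\cup S^*$ directly; the resulting inequality chain and the $(\log K+1)+1$ decomposition of the constant are identical.
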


To prove Theorem~\ref{thm:single_segment}, we first introduce an auxiliary lemma where we show  that if the sum of preference weights of an assortment $S_1$ is less than  $\alpha$ times the sum of preference weights of an assortment $S_2$, then the expected revenue of an optimal expansion of $S_2$ is at least $1/(\alpha+1)$ times the expected revenue of $S_1$. 
We use $\bar{R}(S)$ to refer to the expected revenue for an optimal expansion of assortment $S$, i.e., 
$$\bar{R}(S)=\max_{S'\supseteq S}R(S').$$
Our result is given in the following lemma. 

\begin{lemma}\label{lemma:optimal_expansion}
    Let $S_1,S_2\subseteq\mathcal{N}$ such that $\sum_{i\in S_1}v_i\leq \alpha\sum_{i\in S_2}v_i$. Then
	\begin{equation*}
		\bar{R}(S_1)\geq \dfrac{1}{\alpha+1}R(S_2).
	\end{equation*}
\end{lemma}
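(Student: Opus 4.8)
The plan is to bypass the full optimization defining $\bar R(S_1)$ and instead exhibit a single explicit expansion of $S_1$ whose revenue already beats the target bound. Since $\bar R(S_1)=\max_{S'\supseteq S_1}R(S')$, any particular superset of $S_1$ furnishes a lower bound on $\bar R(S_1)$. The natural candidate is $S'=S_1\cup S_2$, which contains $S_1$ and simultaneously incorporates all of $S_2$'s revenue-generating products. Thus the whole lemma reduces to showing
\begin{equation*}
R(S_1\cup S_2)\;\ge\;\frac{1}{\alpha+1}\,R(S_2),
\end{equation*}
after which $\bar R(S_1)\ge R(S_1\cup S_2)$ closes the argument.

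To prove this reduced inequality I would bound the numerator and denominator of $R(S_1\cup S_2)=\bigl(\sum_{i\in S_1\cup S_2}r_iv_i\bigr)/\bigl(1+\sum_{i\in S_1\cup S_2}v_i\bigr)$ separately. For the numerator, dropping the contributions of products in $S_1\setminus S_2$ only decreases it (each term $r_iv_i\ge 0$ since $r_i\ge 0$ and $v_i>0$), so $\sum_{i\in S_1\cup S_2}r_iv_i\ge\sum_{i\in S_2}r_iv_i$. For the denominator, subadditivity over the union gives $\sum_{i\in S_1\cup S_2}v_i\le\sum_{i\in S_1}v_i+\sum_{i\in S_2}v_i$, and then the hypothesis $\sum_{i\in S_1}v_i\le\alpha\sum_{i\in S_2}v_i$ yields
\begin{equation*}
1+\sum_{i\in S_1\cup S_2}v_i\;\le\;1+(\alpha+1)\sum_{i\in S_2}v_i.
\end{equation*}
Combining the two bounds produces $R(S_1\cup S_2)\ge\bigl(\sum_{i\in S_2}r_iv_i\bigr)/\bigl(1+(\alpha+1)\sum_{i\in S_2}v_i\bigr)$.

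It then remains to compare this with $\tfrac{1}{\alpha+1}R(S_2)$. Canceling the common factor $\sum_{i\in S_2}r_iv_i$ (the case $\sum_{i\in S_2}r_iv_i=0$ being trivial, as the right-hand side is then zero), the desired inequality is equivalent to $(\alpha+1)\bigl(1+\sum_{i\in S_2}v_i\bigr)\ge 1+(\alpha+1)\sum_{i\in S_2}v_i$, which simplifies to $\alpha+1\ge 1$, i.e.\ $\alpha\ge 0$. This holds trivially. I do not expect a genuine obstacle here: the proof is short once $S_1\cup S_2$ is chosen as the test expansion. The only points needing care are handling the overlap $S_1\cap S_2$ correctly through subadditivity of the preference-weight sum, and verifying that the extra additive constant $1$ in the MNL denominator distributes favorably so that the surplus $(\alpha+1)-1=\alpha\ge 0$ is exactly what makes the final comparison go through.
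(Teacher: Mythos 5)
Your proof is correct. It takes the same overall route as the paper---lower-bound $\bar R(S_1)$ by the revenue of one explicit superset of $S_1$, then bound the numerator from below and the denominator from above, finishing with the algebraic step $1+(\alpha+1)\sum_{i\in S_2}v_i\le(\alpha+1)\bigl(1+\sum_{i\in S_2}v_i\bigr)$---but with one structural simplification worth noting. The paper does not use $S_1\cup S_2$ as the test expansion; instead it introduces an auxiliary knapsack problem, $\max_{S\subseteq\mathcal N}\sum_{i\in S}r_iv_i$ subject to $\sum_{i\in S}v_i\le\sum_{i\in S_2}v_i$, takes an optimal solution $S'$, and works with $S_1\cup S'$. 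However, the only two facts the paper extracts from $S'$ are $\sum_{i\in S'}r_iv_i\ge\sum_{i\in S_2}r_iv_i$ (optimality, since $S_2$ is feasible) and $\sum_{i\in S'}v_i\le\sum_{i\in S_2}v_i$ (feasibility)---and both hold trivially, with equality in the second, for the choice $S'=S_2$. So your argument is exactly the paper's with the knapsack detour excised; the subsequent chains of inequalities coincide term by term. The knapsack buys the paper nothing essential here (the argument is purely existential, so even the intractability of knapsack is immaterial), which makes your version a bit cleaner and more elementary. Your care about the edge cases ($\alpha\ge 0$, and the trivial case $\sum_{i\in S_2}r_iv_i=0$) is consistent with the paper's standing assumptions $r_i\ge 0$ and $v_i>0$, so nothing is missing.
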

\begin{proof}
	Let $S'$ be an optimal solution of the following knapsack problem.
	\begin{equation*}
        \begin{aligned}
		&\max_{S\subseteq\mathcal{N}} &&{\sum_{i\in S}r_iv_i}\\
		&\text{s.t.}&&{\sum_{i\in S}v_i\leq \sum_{i\in S_2}v_i.}
        \end{aligned}
	\end{equation*}
We claim that $R(S_1\cup S')\geq R(S_2)/(\alpha+1)$. In fact, since $S_2$ is also a feasible solution to the knapsack problem, we have by optimality of $S'$ that $\sum_{i\in S'}r_iv_i\geq \sum_{i\in S_2}r_iv_i$. Therefore,
	\begin{align*}
		R(S_1\cup S')=&\dfrac{\sum_{i\in S_1\cup S'}r_iv_i}{1+\sum_{S_1\cup S'}v_i}\geq\dfrac{\sum_{i\in S'}r_iv_i}{1+\sum_{i\in S_1}v_i+\sum_{i\in S'}v_i}\\\geq&\dfrac{\sum_{i\in S_2}r_iv_i}{1+(\alpha+1)\sum_{i\in S_2}v_i}\geq\dfrac{\sum_{i\in S_2}r_iv_i}{(\alpha+1)(1+\sum_{i\in S_2}v_i)}=\dfrac{1}{\alpha+1}R(S_2).
	\end{align*}
	By definition $\bar{R}(S_1)=\max_{S\supseteq S_1}R(S)$, and since $S_1 \subseteq S_1 \cup S'$, we get
	\begin{equation*}
		\bar{R}(S_1)\geq R(S_1\cup S')\geq \dfrac{1}{\alpha+1}R(S_2).\qedhere
	\end{equation*}
\end{proof}

Building on  Lemma~\ref{lemma:optimal_expansion}, we are able to complete the proof of Theorem~\ref{thm:single_segment}.

\begin{proof}[Proof of Theorem~\ref{thm:single_segment}]
Let $S^*$ be an  optimal solution to~\eqref{prob:deterministic_single_segment}, and let  $\tilde{S}$ be an optimal solution to weighted set cover problem \eqref{step1}. Since $S^*$ is also a feasible solution to Problem \eqref{step1}, we have $\sum_{i\in S^*} v_i\geq \sum_{i\in\tilde{S}}v_i$. Additionally, our approximate solution $\hat{S}$ for Problem \eqref{step1} was obtained using greedy algorithm, hence by~\cite{lovasz1975ratio}, we know that $\hat{S}$ gives a $H_K$-approximation to Problem \eqref{step1}, where $H_K$ is the harmonic sum defined as $H_K=\sum_{k=1}^K 1/k$. Note that  $\log K \leq H_K \leq \log K +1$. Therefore
\begin{equation*}
\sum_{i\in\hat{S}}v_i\leq H_K\cdot \sum_{i\in \tilde{S}}v_i\leq H_K\cdot \sum_{i\in S^*}v_i.
\end{equation*}
By applying Lemma~\ref{lemma:optimal_expansion} using $\hat{S}$, $S^*$ and $\alpha= H_K$, we obtain
$    \bar{R}(\hat{S}) \geq   {R}(S^*)  /(H_K+1)  $. Recall $\bar{S}$ is the assortment returned by Algorithm \ref{alg:single_segment}, therefore
\begin{equation*}
R(\bar{S}) = \bar{R}(\hat{S})    \geq \dfrac{1}{H_K+1}R(S^*)\geq\dfrac{1}{\log K+2}\cdot R(S^*),
\end{equation*}
thus $\bar{S}$ is a $1/(\log K+2)$-approximation of~\eqref{prob:deterministic_single_segment}.
\end{proof}

{  
\begin{remark}[Extensions to Cardinality Constraint]
In Appendix~\ref{sec:daoc_cardinality}, we further consider an extension of \eqref{prob:deterministic_single_segment}, where besides covering constraints, the assortment should also satisfy a cardinality constraint. For this problem, we establish a bicriteria approximation algorithm that returns an assortment that satisfies the covering constraints, attains an expected revenue at least $\Omega(1/\log K)$ fraction of the optimal expected revenue, and satisfies the covering constraints up to a factor of $O(\log K)$.
\end{remark}
}

\section{Randomized Single-Segment Assortment Optimization}\label{sec:randomized_setting}

In this section, we consider the randomized single-segment assortment optimization problem under the MNL model with covering constraints. In this setting, customer choices still follow the MNL model. From the seller's side, instead of deterministically offering a single assortment to all customers as in \cref{sec:deterministic_setting}, in the randomized setting the seller decides a distribution over assortments $q(\cdot)$. Whenever a customer comes to the seller, the seller randomly draws an assortment $S$ with probability  $q(S)$ and offers the assortment $S$ to the customer. Similar to the deterministic setting, the products are categorized into  categories $C_1,C_2,\dots,C_K$, where $C_k\subseteq\mathcal{N}$ for all $k\in\{1,2,\dots,K\}$. Each category $C_k$ is associated with a minimum threshold $\ell_k$. The covering constraints in the randomized setting are introduced in expectation. In particular, we require that the expected number of offered products that belong to $C_k$ under the probability mass function $q(\cdot)$ should be at least $\ell_k$. In other words, the probability mass function $q(\cdot)$ must satisfy $\sum_{S\subseteq\mathcal{N}}|S\cap C_k|q(S)\geq \ell_k$ for all $k\in\{1,2,\dots,K\}$. We refer to this problem as the \textit{Randomized Assortment Optimization with Covering constraints}, briefly~\eqref{prob:randomized_single_segment}, and its formal definition is provided by

\begin{equation}
\label{prob:randomized_single_segment}
\tag{RAOC}
\begin{aligned}
& \max_{q(\cdot)} && \sum_{S\subseteq\mathcal{N}} R(S)q(S) \\
& \text{s.t.} && \sum_{S\subseteq\mathcal{N}}|S\cap C_k|q(S)\geq \ell_k,\ \forall k\in\{1,2,\dots,K\}, \\
&&& \sum_{S\subseteq\mathcal{N}}q(S)=1,\\
&&& q(S)\geq 0,\ \forall S\subseteq\mathcal{N}. \\
\end{aligned}
\end{equation}

Here, $R(S)$ is the expected revenue by offering assortment $S$ given by~\eqref{eq:revenue_mnl}, and the decision variable $q(\cdot)$ is a probability mass function over assortments. The validity of $q(\cdot)$ as a probability mass function is guaranteed by the last two constraints of~\eqref{prob:randomized_single_segment}.
Furthermore, the covering constraints in~\eqref{prob:randomized_single_segment} ensure that for each $k$,  the expected number of products in $C_k$  offered to the customer, which is given  by $\sum_{S\subseteq\mathcal{N}}|S\cap C_k|q(S)$, should be greater or equal than the  threshold $\ell_k$. 

Note that \eqref{prob:randomized_single_segment} contains an exponential number of variables. Our main contribution in this section is to show that, surprisingly, \eqref{prob:randomized_single_segment}~can be solved in polynomial time by solving an equivalent compact linear program that has a polynomial number of variables and constraints. We present our  efficient linear program in Section \ref{sec:LP-random}. Furthermore, we study the value of using randomized assortments as opposed to deterministic assortments in Section \ref{sec:value_randomized}.

\subsection{Linear Program for \ref{prob:randomized_single_segment}} \label{sec:LP-random}

In this section, we show that \eqref{prob:randomized_single_segment}~can be solved in polynomial time by solving an equivalent efficient linear program. Our linear program is given by:
\begin{equation}\label{prob:lp_randomized}\tag{RAOC-LP}
\begin{aligned}
&\max_{\mathbf{x},\mathbf{y}}&&{\sum_{i\in\mathcal{N}}r_iv_ix_i}\\
&\textup{s.t.}&&{x_0+\sum_{i\in\mathcal{N}}v_ix_i=1,}\\
&&&{x_i\leq x_0,\ \forall i\in\mathcal{N},}\\
&&&{y_{ij}\leq x_i,\ y_{ij}\leq x_j,\ \forall i,j\in\mathcal{N},}\\
&&&{\sum_{i\in C_k}\left(x_i+\sum_{j\in\mathcal{N}}v_jy_{ij}\right)\geq \ell_k,\ \forall k\in\{1,2,\dots,K\}.}
\end{aligned}
\end{equation}

In our formulation above, $x_0$ can be interpreted as the probability of no-purchase, and $x_i$ as the ratio of purchase probability of product $i$ over $v_i$. Then the purchase probability of product $i$ is $v_ix_i$. Summing over all purchase probabilities gives the first constraint $x_0+\sum_{i\in\mathcal{N}}v_ix_i=1$. Furthermore, since under the MNL model, in any assortment the purchase probability of product $i$ is at most $v_i$ times the no-purchase probability, the second set of constraints $x_i\leq x_0$ for all $i\in\mathcal{N}$ is valid under the MNL model. We will show later in this section
that the last two sets of constraints imply that the distribution over offered assortments satisfies the covering constraints in our problem. We would like to note that a closely related linear program formulation was established  in \cite{cao2023revenue} for another problem which considers assortment optimization and network revenue management under mixture of MNL and independent demand models. Specifically,  similar to~\eqref{prob:lp_randomized}, \cite{cao2023revenue}  also use $x_i+\sum_{j\in\mathcal{N}}v_jy_{ij}$  to capture {  whether} product $i$ {  is} offered for each $i\in\mathcal{N}$ {  when the seller deterministically offers a single assortment}, where $y_{ij}= \min\{x_i,x_j\}$ for all $i,j \in \mathcal{N}$.




Consider an optimal solution $(\mathbf{x}^*,\mathbf{y}^*)$ of \eqref{prob:lp_randomized}. Suppose without loss of generality that  $x^*_1 \geq x^*_{2}\geq\dots x^*_{n}$. For  $p \in \{1, \ldots, n\}$, let  $S_p$  denote the set  $\{1,2,\dots,p\}$. We construct a  distribution over assortments as follows:

\begin{equation}
q^*(S)=\begin{cases}
\left(1+\sum_{j=1}^p v_{j}\right)(x^*_{p}-x^*_{{p+1}}),\ &\text{if}\ S=S_p\ \text{for some}\ p\in\{1,2,\dots,n\},\\
1-\sum_{p=1}^n q^*(S_p),\ &\text{if}\ S=\varnothing,\\
0,\ &\text{otherwise},
\end{cases}\label{eq:solution_recover}
\end{equation}
with the notation $x^*_{n+1}=0$. 
The construction of the distribution of assortments \( q^*(\cdot) \) using \( \mathbf{x}^* \) follows from Theorem 1 of \cite{topaloglu2013joint}, where the author {  presents}, in general, how to construct a distribution of assortments if we know the purchase probabilities. {  Similar construction of distribution of assortments is also established in Section 4.2 of \cite{lei2022joint}.}

In the next theorem, we show that we can obtain an optimal solution to~\eqref{prob:randomized_single_segment} using an optimal solution to~\eqref{prob:lp_randomized}.

\begin{theorem}\label{thm:lp_randomized_single_segment}
For an optimal solution $(\mathbf{x}^*,\mathbf{y}^*)$ to~\eqref{prob:lp_randomized}, define $q^*(\cdot)$ using~\eqref{eq:solution_recover}, then $q^*(\cdot)$ is an optimal solution to~\eqref{prob:randomized_single_segment}.
\end{theorem}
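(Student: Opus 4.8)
The plan is to prove the theorem by sandwiching the two optimal values: first show that \eqref{prob:lp_randomized} is a relaxation of \eqref{prob:randomized_single_segment}, so its optimal value is an upper bound on that of \eqref{prob:randomized_single_segment}, and then show that the recovered distribution $q^*(\cdot)$ from \eqref{eq:solution_recover} is \emph{feasible} for \eqref{prob:randomized_single_segment} with objective value exactly equal to the LP optimum. Together these force the two optimal values to coincide and certify $q^*(\cdot)$ as an optimal solution of \eqref{prob:randomized_single_segment}. The two workhorse identities, valid under the MNL model, are $\phi(0,S)+\sum_{i\in S}\phi(i,S)=1$ and $\phi(i,S)=v_i\,\phi(0,S)$ for every $i\in S$; I would isolate these at the outset.

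For the relaxation direction, given any feasible $q(\cdot)$ I would construct an LP point by setting $x_0=\sum_S\phi(0,S)q(S)$, $x_i=\tfrac{1}{v_i}\sum_S\phi(i,S)q(S)$ so that $v_i x_i$ is the purchase probability of $i$, and $y_{ij}=\sum_{S:\,i,j\in S}\phi(0,S)q(S)$. The two identities above immediately give the normalization $x_0+\sum_i v_i x_i=1$, the bounds $x_i\le x_0$, and $y_{ij}\le\min\{x_i,x_j\}$. The key computation is that $x_i+\sum_j v_j y_{ij}$ collapses to the \emph{offer probability} $\sum_{S:\,i\in S}q(S)$ of product $i$; here one uses $\phi(0,S)\sum_{j\in S}v_j=1-\phi(0,S)$ so that, for each $S$ containing $i$, the contribution is $q(S)\phi(0,S)\bigl(1+\sum_{j\in S}v_j\bigr)=q(S)$. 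Consequently the LP covering constraints reduce exactly to those of \eqref{prob:randomized_single_segment}, and the objectives agree term by term, giving feasibility of the constructed point and the desired upper bound.

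For the construction direction, I would verify three facts about $q^*(\cdot)$. First, it is a genuine distribution: each $q^*(S_p)\ge 0$ because the coordinates are sorted as $x_1^*\ge\cdots\ge x_n^*$, and an Abel summation shows $\sum_{p=1}^n q^*(S_p)=x_1^*+\sum_{p=1}^n v_p x_p^*\le x_0^*+\sum_p v_p x_p^*=1$, using $x_1^*\le x_0^*$, so $q^*(\varnothing)\ge 0$. Second, for the nested sets $S_p=\{1,\dots,p\}$ the purchase probability of $i$ under $q^*(\cdot)$ telescopes to $v_i x_i^*$ (this is the content of Theorem~1 of \cite{topaloglu2013joint}), so the \eqref{prob:randomized_single_segment} objective of $q^*(\cdot)$ equals $\sum_i r_i v_i x_i^*$, the LP optimum. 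Third, the offer probability of $i$ under $q^*(\cdot)$ equals $x_i^*+\sum_j v_j\min\{x_i^*,x_j^*\}\ge x_i^*+\sum_j v_j y_{ij}^*$, so summing over $i\in C_k$ and invoking LP feasibility yields each covering constraint.

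I expect the main obstacle to be the bookkeeping in the two telescoping identities under the nested randomization: showing that the offer probability of $i$ is exactly $x_i^*+\sum_j v_j\min\{x_i^*,x_j^*\}$ and the purchase probability is $v_i x_i^*$. Both follow from Abel summation after substituting $\phi(i,S_p)=v_i/(1+\sum_{j=1}^p v_j)$, but one must track the sorted indexing and the boundary term $x_{n+1}^*=0$ carefully. A secondary subtlety is that an arbitrary LP optimum need only satisfy $y_{ij}^*\le\min\{x_i^*,x_j^*\}$ rather than equality; fortunately this is precisely the inequality direction needed, since it guarantees that the true offer probabilities induced by $q^*(\cdot)$ dominate the LP expression, so the covering constraints of \eqref{prob:randomized_single_segment} are preserved.
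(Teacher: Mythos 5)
Your proposal is correct, and one half of it takes a genuinely different route from the paper. For the direction ``LP value $\geq$ \eqref{prob:randomized_single_segment} value,'' the paper first invokes a structural lemma (via Corollary 1 of \cite{lu2023simple}) to replace an optimal distribution by one supported on a nested family $S_1\supseteq\cdots\supseteq S_m$, and only then builds the LP point, using a telescoping identity (Lemma~\ref{lemma:visibility_representation}) that is valid only for nested supports with $y_{ij}=\min\{x_i,x_j\}$. You instead map an \emph{arbitrary} feasible $q(\cdot)$ to an LP point by taking $x_i=\sum_{S\ni i}\phi(0,S)q(S)$ and $y_{ij}=\sum_{S:\,i,j\in S}\phi(0,S)q(S)$; then $x_i+\sum_j v_j y_{ij}=\sum_{S\ni i}\phi(0,S)q(S)\bigl(1+\sum_{j\in S}v_j\bigr)=\sum_{S\ni i}q(S)$ exactly, with no nestedness needed, so the LP is exhibited directly as a relaxation of \eqref{prob:randomized_single_segment}. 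This is leaner and more self-contained: it removes the dependence on the nested-support lemma for this theorem (the paper still needs that lemma elsewhere, e.g.\ for Proposition~\ref{prop:num_assortment}, but not here under your argument). For the reverse direction your argument coincides with the paper's Step 2 --- the same nonnegativity and Abel-summation check that $q^*(\cdot)$ is a distribution, and the same telescoping identities for purchase and offer probabilities --- with one small refinement: where the paper assumes without loss of generality that $y^*_{ij}=\min\{x^*_i,x^*_j\}$ at the LP optimum, you observe that the feasibility inequality $y^*_{ij}\le\min\{x^*_i,x^*_j\}$ already points the right way, since the true offer probabilities induced by $q^*(\cdot)$ then dominate the LP covering expressions. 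Both handlings are valid; yours avoids having to justify the WLOG step.
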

To prove Theorem~\ref{thm:lp_randomized_single_segment}, we introduce two lemmas. First, in Lemma~\ref{lemma:nested_support}, we prove that there exists an optimal solution to~\eqref{prob:randomized_single_segment}~that is supported on a sequence of nested assortments.

\begin{lemma}\label{lemma:nested_support}
There exists an optimal solution $q(\cdot)$ to~\eqref{prob:randomized_single_segment}~and a sequence of assortments $\{S_i\}_{i=1}^m$ for some $m$, such that $S_1\supseteq S_2\supseteq\dots\supseteq S_m$ and $q(S)=0$ for all $S\notin \{S_i\}_{i=1}^m$.
\end{lemma}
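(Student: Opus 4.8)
The plan is to take any optimal distribution $q$ for \eqref{prob:randomized_single_segment} and reconstruct from it an optimal distribution supported on a nested chain, using the purchase probabilities induced by $q$ as the bridge. Write $V(S)=\sum_{j\in S}v_j$ and set $w(S)=q(S)/(1+V(S))$. Define $x_0=\sum_{S}w(S)$, the no-purchase probability, and $x_i=\sum_{S\ni i}w(S)=P_i/v_i$, where $P_i=\sum_{S}\phi(i,S)q(S)$ is the purchase probability of product $i$ under $q$. Two facts follow immediately by rearranging sums: $x_i\le x_0$ for every $i$ (a subsum of nonnegative terms), and $x_0+\sum_{i}v_ix_i=\sum_S w(S)(1+V(S))=\sum_S q(S)=1$. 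The objective value of $q$ equals $\sum_i r_iP_i=\sum_i r_iv_ix_i$, so it is determined entirely by the vector $\mathbf{x}$.

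Next I would relabel the products so that $x_1\ge x_2\ge\cdots\ge x_n$, set $S_p=\{1,\dots,p\}$, and build the nested distribution $q'$ exactly as in \eqref{eq:solution_recover}. Nonnegativity of $q'(S_p)$ follows from $x_p\ge x_{p+1}$, and a short telescoping computation gives $\sum_{p=1}^n q'(S_p)=x_1+\sum_j v_j x_j=1-(x_0-x_1)\le 1$, so that $q'(\varnothing)=x_0-x_1\ge0$ and $q'$ is a genuine distribution over the chain $S_1\supseteq\cdots\supseteq S_n$. By the telescoping identity underlying Theorem~1 of \cite{topaloglu2013joint}, $q'$ induces exactly the purchase probabilities $P_i=v_ix_i$, hence $q'$ attains the same objective value as $q$.

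The one thing that is not automatic, and what I expect to be the crux, is that $q'$ still satisfies the covering constraints, since those constraints depend on the offer probabilities $\rho_i=\sum_{S\ni i}q(S)$ rather than on the purchase probabilities, and the reconstruction only preserves the latter. Here I would show that reconstruction can only increase every offer probability, i.e. $\rho'_i\ge\rho_i$, where $\rho'_i$ is the offer probability of $i$ under $q'$. A direct telescoping evaluation yields $\rho'_i=x_i+\sum_{j}v_j\min(x_i,x_j)$ (which is exactly the quantity appearing in the last constraint of \eqref{prob:lp_randomized} with $y_{ij}=\min(x_i,x_j)$). The key inequality is $\min(x_i,x_j)\ge\sum_{S\supseteq\{i,j\}}w(S)$, valid because every set containing both $i$ and $j$ contributes to both $x_i$ and $x_j$. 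Summing against $v_j$ and swapping the order of summation,
\begin{equation*}
\rho'_i\ \ge\ x_i+\sum_{S\ni i}w(S)\sum_{j\in S}v_j\ =\ \sum_{S\ni i}w(S)\bigl(1+V(S)\bigr)\ =\ \sum_{S\ni i}q(S)\ =\ \rho_i.
\end{equation*}
Consequently $\sum_{i\in C_k}\rho'_i\ge\sum_{i\in C_k}\rho_i\ge\ell_k$ for every $k$, so $q'$ is feasible; being feasible, nested, and matching the optimal objective, it is the desired optimal nested solution.

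Finally, I would contrast this with the naive alternative: an uncrossing argument that replaces two incomparable support sets $A,B$ by $A\cup B$ and $A\cap B$ with equal weight preserves all offer probabilities (hence the covering constraints), but it fails on the objective, since $R$ is neither sub- nor supermodular under the MNL model and $R(A\cup B)+R(A\cap B)\ge R(A)+R(B)$ need not hold. This is precisely why routing the argument through purchase probabilities, where the objective is linear and exactly preserved, is the more robust route, and why the offer-probability monotonicity $\rho'_i\ge\rho_i$ is the genuine content of the lemma.
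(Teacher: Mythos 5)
Your proof is correct, and it takes a genuinely different route from the paper's. The paper proves this lemma by reformulation plus citation: it introduces auxiliary offer-probability variables $p_i$, fixes them at their optimal values $\mathbf{p}^*$, observes that the resulting inner problem is exactly the fair assortment optimization problem of \cite{lu2023simple}, and invokes their Corollary~1 to conclude that an optimal solution supported on nested assortments exists. Your argument is instead self-contained and constructive: you extract the purchase-probability ratios $x_i$ from an arbitrary optimal $q$, rebuild the nested distribution $q'$ via \eqref{eq:solution_recover}, and then prove feasibility through the monotonicity claim $\rho'_i\geq\rho_i$, whose engine is the clean inequality $\min(x_i,x_j)\geq\sum_{S\supseteq\{i,j\}}w(S)$ followed by the exchange of summation $\sum_j v_j\sum_{S\supseteq\{i,j\}}w(S)=\sum_{S\ni i}w(S)V(S)$. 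I verified all the computations: the telescoping identities you cite are exactly those the paper establishes in Lemma~\ref{lemma:visibility_representation} and in Step~2 of the proof of Theorem~\ref{thm:lp_randomized_single_segment}, so there is no circularity in leaning on them (Lemma~\ref{lemma:visibility_representation} is proved independently of this lemma). What each approach buys: the paper's proof is short but outsources the combinatorial core to an external result; yours eliminates that dependence and proves something strictly stronger and of independent interest --- that rearranging any feasible distribution into its nested form with the same purchase probabilities can only \emph{increase} every product's offer probability --- which is precisely why nestedness is compatible with covering constraints. The cost is that your proof of the lemma front-loads most of the machinery (the construction \eqref{eq:solution_recover} and the representation \eqref{eq:visibility_rep}) that the paper deploys only later when proving Theorem~\ref{thm:lp_randomized_single_segment}, so in the paper's organization it would make parts of that later proof redundant. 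Your closing remark about why naive uncrossing ($A,B\mapsto A\cup B, A\cap B$) fails is also accurate: it preserves offer probabilities but not the objective, since $R$ under the MNL model satisfies no submodularity or supermodularity inequality that would make the exchange revenue-improving.
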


\begin{proof}
By introducing new variables $p_i$ for all $i \in {\cal N}$,    we rewrite~\eqref{prob:randomized_single_segment} as follows
    \begin{equation}\label{prob:randomized_single_segment_2}
    \begin{aligned}
    &\max_{q(\cdot),\mathbf{p}}&&{\sum_{S\subseteq\mathcal{N}}R(S)q(S)}\\
    &\text{s.t.}&&{\sum_{S\ni i}q(S)\geq p_i,\ \forall i\in\mathcal{N},}\\
    &&&{\sum_{i\in C_k}p_i\geq \ell_k,\ \forall k\in\{1,2,\dots,K\},}\\
    &&&{\sum_{S\subseteq\mathcal{N}}q(S)=1,}\\
    &&&{q(S)\geq 0,\ \forall S\subseteq\mathcal{N}.}
    \end{aligned}
    \end{equation}
    Let $(q^*(\cdot),\mathbf{p}^*)$ be an optimal solution to~\eqref{prob:randomized_single_segment_2}. By fixing $\mathbf{p}$ in~\eqref{prob:randomized_single_segment_2}~to  $\mathbf{p}^*$ and only optimizing over $q(\cdot)$, we derive the following problem
    \begin{equation}\label{prob:randomized_single_segment_3}
    \begin{aligned}
    &\max_{q(\cdot)}&&{\sum_{S\subseteq\mathcal{N}}R(S)q(S)}\\
    &\text{s.t.}&&{\sum_{S\ni i}q(S)\geq p_i^*,\ \forall i\in\mathcal{N},}\\
    &&&{\sum_{S\subseteq\mathcal{N}}q(S)=1,}\\
    &&&{q(S)\geq 0,\ \forall S\subseteq\mathcal{N}.}
    \end{aligned}
    \end{equation}
    For any optimal solution $\hat{q}(\cdot)$ of~\eqref{prob:randomized_single_segment_3}, $(\hat{q}(\cdot),\mathbf{p}^*)$ is an optimal solution to~\eqref{prob:randomized_single_segment_2}, thus $\hat{q}(\cdot)$ is also an optimal solution to~\eqref{prob:randomized_single_segment}.

    Problem \eqref{prob:randomized_single_segment_3} is similar to the fair assortment optimization problem considered in~\cite{lu2023simple}. By Corollary 1 of~\cite{lu2023simple}, there exists an optimal solution $\hat{q}(\cdot)$ to~\eqref{prob:randomized_single_segment_3} such that all assortments offered with a positive probability are nested. In other words, there exists an optimal solution $q(\cdot)$ to~\eqref{prob:randomized_single_segment}~and a sequence of assortments $\{S_i\}_{i=1}^m$ for some $m$, such that $S_1\supseteq S_2\supseteq\dots\supseteq S_m$ and $q(S)=0$ for all $S\notin \{S_i\}_{i=1}^m$.
\end{proof}

Our second lemma  is given in Lemma~\ref{lemma:visibility_representation}, where we show that if the assortments offered with positive probability are nested, then we can derive a nice algebraic expression for the  probability of each product being included in the offered assortments. Specifically, we assume that the seller randomizes only over a nested sequence of assortments $S_1\supseteq S_2\supseteq\dots \supseteq S_m$ for some $m$, and offers assortment $S_\ell$ with probability $q_\ell$ for each $\ell\in\{1,2,\dots,m\}$. We define $x_i$ as the ratio of purchase probability of product $i$ over preference weight $v_i$, given by
\begin{equation}
	x_i=\sum_{\ell=1}^mq_\ell\dfrac{\bm{1}[i\in S_\ell]}{1+\sum_{j\in S_\ell}v_j}.\label{eq:purchase_probability_ratio}
\end{equation}
Here, we use $\bm{1}[i\in S]$ to denote the indicator function of $i\in S$, which takes value $1$ if $i\in S$ and takes value $0$ if $i\notin S$. We show that for each $i\in\mathcal{N}$, the probability that product $i$ is included in the offered assortment, i.e.,  \mbox{$\sum_{\ell=1}^m q_\ell\cdot \bm{1}[i\in S_\ell]$}, can be expressed nicely using $\{x_i\}_{i\in\mathcal{N}}$. We  would like to note that a similar expression also appears in the proof of Theorem 6.1 in \cite{cao2023revenue}. 

\begin{lemma}\label{lemma:visibility_representation}
	Consider a sequence of nested  assortments $S_1\supseteq S_2\supseteq\dots\supseteq S_m$, and a probability $q_\ell$ of offering assortment $S_\ell$ for each $\ell\in\{1,2,\dots,m\}$. For $i\in \mathcal{N}$, let  $x_i$ be defined as in~\eqref{eq:purchase_probability_ratio}. Then for any $i\in\mathcal{N}$,
	\begin{equation}
		\sum_{\ell=1}^m q_\ell\cdot \bm{1}[i\in S_\ell]=x_i+\sum_{j\in\mathcal{N}}v_j\min\{x_i,x_j\}.\label{eq:visibility_rep}
	\end{equation}
\end{lemma}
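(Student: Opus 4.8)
The plan is to exploit the nested structure to collapse every $\min$ on the right-hand side of \eqref{eq:visibility_rep} into a sum over the layers, and then recombine. First I would fix notation: write $W_\ell = 1 + \sum_{j\in S_\ell} v_j$ for the denominator attached to $S_\ell$, so that $x_i = \sum_{\ell=1}^m q_\ell\,\bm{1}[i\in S_\ell]/W_\ell$. For each product $i$ set $\ell_i = \max\{\ell : i\in S_\ell\}$ (with $\ell_i=0$ if $i\notin S_1$). Because $S_1\supseteq\cdots\supseteq S_m$, membership is monotone, i.e. $\bm{1}[i\in S_\ell]=\bm{1}[\ell\le \ell_i]$, whence $x_i=\sum_{\ell\le \ell_i} q_\ell/W_\ell$. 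Since every summand is nonnegative, this yields the monotonicity $\ell_i\le \ell_j \Rightarrow x_i\le x_j$, which is the precise feature of the nested construction I will lean on.

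The key step is the identity
$$\min\{x_i,x_j\}=\sum_{\ell=1}^m \frac{q_\ell}{W_\ell}\,\bm{1}[i\in S_\ell]\,\bm{1}[j\in S_\ell]\qquad\text{for all }i,j\in\mathcal{N}.$$
To prove it, assume without loss of generality $\ell_i\le \ell_j$; then $\bm{1}[i\in S_\ell]\,\bm{1}[j\in S_\ell]=\bm{1}[\ell\le \min\{\ell_i,\ell_j\}]=\bm{1}[\ell\le \ell_i]$, so the right-hand side equals $\sum_{\ell\le \ell_i}q_\ell/W_\ell=x_i$, and by the monotonicity just noted $x_i=\min\{x_i,x_j\}$. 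I expect this reduction to be the main obstacle, in the sense that it is the one place where nesting is genuinely used: without it $\min\{x_i,x_j\}$ would not factor through the layer indicators and the computation would not go through.

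With the identity in hand the remainder is bookkeeping. I would substitute it into $\sum_{j\in\mathcal{N}} v_j\min\{x_i,x_j\}$, interchange the finite sums over $j$ and $\ell$, and use $\sum_{j\in S_\ell} v_j = W_\ell-1$ to obtain
$$\sum_{j\in\mathcal{N}} v_j\min\{x_i,x_j\}=\sum_{\ell=1}^m \frac{q_\ell}{W_\ell}\,\bm{1}[i\in S_\ell]\sum_{j\in S_\ell}v_j=\sum_{\ell=1}^m \frac{q_\ell}{W_\ell}\,\bm{1}[i\in S_\ell]\,(W_\ell-1).$$
Splitting the factor $(W_\ell-1)/W_\ell = 1 - 1/W_\ell$ separates the expression into $\sum_{\ell=1}^m q_\ell\,\bm{1}[i\in S_\ell] - x_i$, since the $1/W_\ell$ piece reproduces exactly $x_i$. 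Adding $x_i$ to both sides then gives $x_i+\sum_{j\in\mathcal{N}}v_j\min\{x_i,x_j\}=\sum_{\ell=1}^m q_\ell\,\bm{1}[i\in S_\ell]$, which is precisely \eqref{eq:visibility_rep} and completes the argument.
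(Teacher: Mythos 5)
Your proof is correct, and it is organized differently from the paper's. The paper first reduces without loss of generality to the case $m=n$ with $S_\ell=\{\ell,\ell+1,\dots,n\}$, so that membership becomes $i\in S_\ell \iff i\ge \ell$ and the $x_i$ are sorted; it then splits $\sum_{j}v_j\min\{x_i,x_j\}$ into the ranges $j\le i$ and $j>i$ and verifies the resulting double-sum identity by interchanging the order of summation. You instead isolate the single identity $\min\{x_i,x_j\}=\sum_{\ell}(q_\ell/W_\ell)\,\bm{1}[i\in S_\ell]\,\bm{1}[j\in S_\ell]$ --- a factorization of the minimum through the common layers --- after which the lemma follows by swapping two finite sums, using $\sum_{j\in S_\ell}v_j=W_\ell-1$, and splitting $(W_\ell-1)/W_\ell=1-1/W_\ell$ to recover $x_i$. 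The two arguments use nestedness in exactly one place each, and your factorization identity is the coordinate-free counterpart of the paper's case analysis of the minimum under the sorted order. What your route buys: it works directly with an arbitrary nested family (no relabeling of products, no padding to $m=n$), it makes explicit the precise point where nesting is indispensable, and the remaining computation is mechanical. What the paper's route buys: the explicit suffix-set picture, with partial sums $\sum_{\ell=1}^{i}q_\ell$ and ordered $x_i$, is the same concrete picture used immediately afterwards in the proof of Theorem~\ref{thm:lp_randomized_single_segment}, where the distribution $q^*(\cdot)$ is reconstructed from the sorted coordinates of $\mathbf{x}^*$, so the lemma and its application share notation and geometry.
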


\begin{proof}
 For simplicity, assume without loss of generality that $m=n$ (if $m<n$ we could complement the sequence with additional assortments with zero probability). We further assume that \mbox{$S_\ell=\{\ell,\ell+1,\dots,n\}$} for all $\ell\in\{1,2,\dots,n\}$. In this case, $i\in S_\ell$ if and only if $i\geq \ell$. Then we get
    \begin{equation*}
    x_i=\sum_{\ell=1}^i \dfrac{q_\ell}{1+\sum_{j\in S_\ell}v_j},
    \end{equation*}
    which also implies that $x_1\leq x_2\leq\dots\leq x_n$. In this case, the left hand side of \eqref{eq:visibility_rep} equals to $\sum_{\ell=1}^i q_\ell$, and the right hand side of \eqref{eq:visibility_rep} equals to
    \begin{align*}
    x_i+\sum_{j\in\mathcal{N}}v_j\min\{x_i,x_j\}=&x_i+\sum_{j=1}^i v_j \min\{x_i,x_j\}+\sum_{j=i+1}^n v_j \min\{x_i,x_j\}\\
    =&x_i+\sum_{j=1}^i v_jx_j+\sum_{j=i+1}^n v_jx_i=\sum_{j=1}^i v_jx_j+\left(1+\sum_{j=i+1}^n v_j\right)x_i.
    \end{align*}
    Therefore it suffices to verify that for all $i\in \{1,2,\dots,n\}$,
	\begin{equation*}
		\sum_{\ell=1}^i q_\ell=\sum_{j=1}^i v_jx_j+\left(1+\sum_{j=i+1}^n v_j\right)x_i.
	\end{equation*}
	We have
	\begin{align*}
		\sum_{j=1}^i v_jx_j+\left(1+\sum_{j=i+1}^n v_j\right)x_i=&\sum_{j=1}^i\sum_{\ell=1}^j \dfrac{v_j q_\ell}{1+\sum_{p=\ell}^n v_p} 
  + \sum_{\ell=1}^i q_\ell\dfrac{1+\sum_{j=i+1}^nv_j}{1+\sum_{p=\ell}^n v_p}         \\
=& \sum_{\ell=1}^i\sum_{j=\ell}^i \dfrac{v_j q_\ell}{1+\sum_{p=\ell}^n v_p} 
  + \sum_{\ell=1}^i q_\ell\dfrac{1+\sum_{j=i+1}^nv_j}{1+\sum_{p=\ell}^n v_p}         \\
  =& \sum_{\ell=1}^i q_\ell  \dfrac{\sum_{j=\ell}^i v_j + 1+\sum_{j=i+1}^nv_j }{1+\sum_{p=\ell}^n v_p} = \sum_{\ell=1}^i q_\ell.
%
%
	\end{align*}
    Thus we conclude that the equation in the lemma holds.
\end{proof}


Building  on Lemmas~\ref{lemma:nested_support}~and~\ref{lemma:visibility_representation}, we are ready to give the proof of Theorem~\ref{thm:lp_randomized_single_segment}.

\begin{proof}[Proof of Theorem~\ref{thm:lp_randomized_single_segment}]
We prove the theorem in two steps: first we prove that the optimal objective value of~\eqref{prob:randomized_single_segment}~is smaller or equal to that of~\eqref{prob:lp_randomized}, then we prove that the optimal objective value of~\eqref{prob:randomized_single_segment}~is greater or equal to that of~\eqref{prob:lp_randomized}.

\underline{\textbf{Step 1.} \eqref{prob:randomized_single_segment}$\leq$\eqref{prob:lp_randomized}:} By Lemma~\ref{lemma:nested_support}, there exists an optimal solution $q^*(\cdot)$ to~\eqref{prob:randomized_single_segment}~that is supported on a sequence of nested assortments $S_1\supseteq S_2\supseteq\dots\supseteq S_m$ for some $m$. We define $q_\ell=q^*(S_\ell)$ for each $\ell\in\{1,2,\dots,m\}$. For each $i\in\mathcal{N}$, We define $x^*_i$ using~\eqref{eq:purchase_probability_ratio} and we define $x^*_0$ as
\begin{equation*}
x^*_0=\sum_{\ell=1}^m q_\ell\dfrac{1}{1+\sum_{j\in S_\ell}v_j}.
\end{equation*}
It is clear that $x^*_i\leq x^*_0$ for all $i\in\mathcal{N}$. We further have
\begin{equation*}
x^*_0+\sum_{i\in\mathcal{N}}v_ix^*_i=\sum_{\ell=1}^m q_\ell\left(\dfrac{1}{1+\sum_{j\in S_\ell}v_j}+\dfrac{\sum_{j\in S_\ell}v_j}{1+\sum_{j\in S_{\ell}}v_j}\right)=\sum_{\ell=1}^m q_\ell=1.
\end{equation*}
By definition of $x^*_i$ we get
\begin{equation*}
\sum_{i\in\mathcal{N}}r_iv_ix^*_i=\sum_{\ell=1}^mq_\ell\dfrac{\sum_{i\in S_\ell}r_iv_i}{1+\sum_{i\in S_\ell}v_i}=\sum_{\ell=1}^m q_\ell R(S_\ell)=\sum_{S\subseteq\mathcal{N}}R(S)q^*(S).
\end{equation*}
We further set $y^*_{ij}=\min\{x^*_i,x^*_j\}$ for all $i,j\in\mathcal{N}$, then by Lemma~\ref{lemma:visibility_representation}~we have that for any $i\in\mathcal{N}$,
\begin{equation*}
\sum_{\ell=1}^m q_\ell\cdot \bm{1}[i\in S_\ell]=x^*_i+\sum_{j\in\mathcal{N}}v_j\min\{x^*_i,x^*_j\}=x^*_i+\sum_{j\in\mathcal{N}}v_jy^*_{ij}.
\end{equation*}
Therefore for any $k\in \{1,2,\dots,K\}$,
\begin{equation*}
\sum_{i\in C_k}\left(x^*_i+\sum_{j\in \mathcal{N}}v_iy^*_{ij}\right)=\sum_{i\in C_k}\sum_{\ell=1}^m q_\ell\cdot \bm{1}[i\in S_\ell]=\sum_{\ell=1}^m |S_\ell\cap C_k|q_\ell=\sum_{S\subseteq\mathcal{N}}|S\cap C_k|q^*(S)\geq \ell_k,
\end{equation*}
where the last inequality is due to the feasibility of $q^*(\cdot)$. Therefore $(\mathbf{x}^*,\mathbf{y}^*)$ constructed above is a feasible solution to~\eqref{prob:randomized_single_segment}, and $\sum_{i\in\mathcal{N}}r_iv_ix^*_i$ is equal to the optimal value of~\eqref{prob:randomized_single_segment}. Hence, the optimal value for~\eqref{prob:randomized_single_segment}~is smaller or equal to the optimal value for~\eqref{prob:lp_randomized}.

\underline{\textbf{Step 2.} \eqref{prob:randomized_single_segment}$\geq$\eqref{prob:lp_randomized}:} Let $(\mathbf{x}^*,\mathbf{y}^*)$ be an optimal solution to~\eqref{prob:lp_randomized}. Assume without loss of generality that $x^*_1\geq x^*_2\geq\dots\geq x^*_n$.  We  also assume without loss of generality that $y^*_{ij}=\min\{x^*_i,x^*_j\}$, which is the optimal choice for $y_{ij}$ given the constraints of \eqref{prob:lp_randomized}.
 We define $q^*(\cdot)$ using~\eqref{eq:solution_recover}. First, let us prove that $q^*(\cdot)$ is a probability mass function. By the assumption that $x^*_1\geq x^*_2\geq \dots\geq x^*_n$ we have $q^*(S_i)\geq 0$ for all $i\in\{1,2,\dots,n\}$. By definition of $q^*(\cdot)$ it is also obvious that $\sum_{S\subseteq\mathcal{N}}q^*(S)=1$. Then it suffices to prove that $q^*(\varnothing)\geq 0$, or equivalently $\sum_{i=1}^n q^*(S_i)\leq 1$. We have
\begin{align*}
\sum_{i=1}^n q^*(S_i)=\sum_{i=1}^n \left(1+\sum_{j=1}^i v_j\right)(x^*_i-x^*_{i+1}) &=\sum_{i=1}^n(x^*_i-x^*_{i+1})+\sum_{i=1}^n\sum_{j=1}^i x^*_iv_j-\sum_{i=1}^n\sum_{j=1}^i x^*_{i+1}v_j\\
=&x^*_1+ x^*_1v_1+ \sum_{i=2}^n\sum_{j=1}^i x^*_iv_j-\sum_{i=2}^n\sum_{j=1}^{i-1}x^*_iv_j\\
=&x^*_1+  \sum_{i=1}^n x^*_iv_i \\
\leq & x^*_0+\sum_{i=1}^n v_ix^*_i=1,
\end{align*}
where the inequality follows from the second constraint in \eqref{prob:lp_randomized} and the last equality holds from the first constraint in \eqref{prob:lp_randomized}.
Therefore $q^*(\cdot)$ is a probability mass function supported on a sequence of nested assortments \mbox{$\{\varnothing\}\cup\{S_i\}_{i\in\{1,2,\dots,n\}}$}. Next, we show that for any $i \in \cal{N}$,
\begin{equation*}
x^*_i=\sum_{\ell=1}^n q^*(S_\ell)\dfrac{\bm{1}[i\in S_\ell]}{1+\sum_{j\in S_\ell}v_j}.
\end{equation*}
In fact, since $S_i=\{1,2,\dots,i\}$, it suffices to prove that
\begin{equation*}
x^*_i=\sum_{\ell=i}^n \dfrac{q^*(S_\ell)}{1+\sum_{j\in S_\ell}v_j},
\end{equation*}
which is true because from Equation~\eqref{eq:solution_recover}~we have
\begin{equation*}
\dfrac{q^*(S_\ell)}{1+\sum_{j\in S_\ell}v_j}=\dfrac{1+\sum_{j=1}^\ell v_j}{1+\sum_{j=1}^\ell v_j}(x^*_\ell-x^*_{\ell+1})=x^*_\ell-x^*_{\ell+1},
\end{equation*}
and by taking the sum we get
\begin{equation*}
\sum_{\ell=i}^n\dfrac{q^*(S_\ell)}{1+\sum_{j\in S_l}v_j}=\sum_{\ell=i}^n (x^*_\ell-x^*_{\ell+1})=x^*_i-x^*_{n+1}=x^*_i.
\end{equation*}
From Lemma~\ref{lemma:visibility_representation}, we have 
\begin{equation*}
\sum_{\ell=1}^n q^*(S_\ell)\cdot \bm{1}[i\in S_\ell]=x^*_i+\sum_{j\in\mathcal{N}}v_i\min\{x^*_i,x^*_j\}=x^*_i+\sum_{j\in\mathcal{N}}v_jy^*_{ij}.
\end{equation*}
Then for any $k\in\{1,2,\dots,K\}$, we get
\begin{align*}
\sum_{S\subseteq\mathcal{N}}|S\cap C_k|q^*(S)=\sum_{\ell=1}^n \sum_{i\in C_k}q^*(S_\ell)\cdot \bm{1}[i\in S_\ell]
=\sum_{i\in C_k}\left(x^*_i+\sum_{j\in\mathcal{N}}v_jy^*_{ij}\right)\geq \ell_k,
\end{align*}
where the last inequality follows from the last constraint in \eqref{prob:lp_randomized}. Therefore $q^*(\cdot)$ is a feasible solution to~\eqref{prob:randomized_single_segment}. We further have that for every $i\in\{1,2,\dots,n\}$,
\begin{equation*}
x^*_i=\sum_{\ell=i}^n (x^*_\ell-x^*_{\ell+1})=\sum_{\ell=i}^n \dfrac{q^*(S_\ell)}{1+\sum_{j\in S_\ell}v_j}=\sum_{\ell=1}^n\dfrac{q^*(S_\ell)\cdot\bm{1}[i\in S_\ell]}{1+\sum_{j\in S_\ell}v_j},
\end{equation*}
which implies that
\begin{equation*}
\sum_{i\in\mathcal{N}}r_iv_ix^*_i=\sum_{\ell=1}^n q^*(S_\ell)\dfrac{\sum_{i\in S_\ell}r_iv_i}{1+\sum_{i\in S_\ell}v_i}=\sum_{\ell=1}^n q^*(S_\ell)R(S_\ell)=\sum_{S\subseteq\mathcal{N}}R(S)q^*(S).
\end{equation*}
This concludes that the optimal value for~\eqref{prob:lp_randomized}~is smaller or equal to the optimal value for~\eqref{prob:randomized_single_segment}. Combining the two parts of the proof we have that~\eqref{prob:randomized_single_segment}~is equivalent to~\eqref{prob:lp_randomized}, and an optimal solution $q^*(\cdot)$ to~\eqref{prob:randomized_single_segment}~can be recovered from an optimal solution $(\mathbf{x}^*,\mathbf{y}^*)$ to~\eqref{prob:lp_randomized}~using~\eqref{eq:solution_recover}.
\end{proof}

{  Before we proceed, we would like to compare our results in the randomized problem with the ones in \cite{lu2023simple}. The baseline problem in \cite{lu2023simple} enforces a minimum offer probability for each individual products, and their fairness constraints can be considered as a special case of covering constraints where each category contains one single product. Both our (RAOC) and the baseline fair assortment optimization problem in \cite{lu2023simple} are polynomial time solvable, but our algorithm is fundamentally different from the one in \cite{lu2023simple}. Specifically, our (RAOC) is solved by reformulating the problem as an equivalent linear program, while \cite{lu2023simple} establishes an algorithm that directly solves their baseline problem without solving a linear program. Nevertheless, since our (RAOC) is a generalization of the baseline problem in \cite{lu2023simple}, our algorithm for (RAOC) can also solve the baseline problem in \cite{lu2023simple} exactly in polynomial time. We would also like to remark that beyond their baseline model, \cite{lu2023simple} also consider an extension that enforces cardinality constraints over the offered products. This extension is proven to be NP-hard, and is not captured by the randomized problem in our paper.}

\subsection{Value of Randomized Assortments}\label{sec:value_randomized}

It is clear that the optimal solution to~\eqref{prob:randomized_single_segment}, which randomizes over multiple assortments, achieves an expected revenue that is greater than or equal to the optimal expected revenue from~\eqref{prob:deterministic_single_segment}, since deterministically offering the optimal assortment from~\eqref{prob:deterministic_single_segment} is also feasible for~~\eqref{prob:randomized_single_segment}.
This naturally raises two questions. First, is implementing the optimal solution to~\eqref{prob:randomized_single_segment} practical, given that there could be an exponential number of assortments to randomize over? Second, how large can the gap in optimal expected revenue between~\eqref{prob:randomized_single_segment} and~\eqref{prob:deterministic_single_segment} be?

We address the first question in Proposition~\ref{prop:num_assortment}, where we show that the optimal solution to~\eqref{prob:randomized_single_segment} randomizes over at most $\min\{K+1,n\}$ assortments. In the forthcoming \cref{sec:numerical}, we further use numerical experiments to demonstrate that in practice, the optimal solution to~\eqref{prob:randomized_single_segment} offers a single assortment deterministically in many cases and randomizes over only a few assortments in most of the remaining cases of our experiments.



\begin{proposition}\label{prop:num_assortment}
There exists an optimal solution $q^*(\cdot)$ to~\eqref{prob:randomized_single_segment}~that offers at most $\min\{K+1,n\}$ assortments with positive probability.
\end{proposition}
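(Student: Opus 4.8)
The plan is to combine two separate reductions so that a single optimal solution satisfies both parts of the bound at once: I first produce an optimal solution whose support is a short nested chain of nonempty sets (this gives the bound $n$), and then pass to a basic optimal solution of a small restricted linear program (this gives the bound $K+1$).

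First, by Lemma~\ref{lemma:nested_support} there is an optimal solution $q(\cdot)$ to~\eqref{prob:randomized_single_segment} supported on a nested sequence $S_1\supseteq S_2\supseteq\dots\supseteq S_m$; since a distribution assigns one probability per assortment, these sets are distinct, so $S_1\supsetneq\dots\supsetneq S_m$. I would next argue that we may assume $\varnothing$ is not offered. Indeed, if $S_m=\varnothing$, then moving its probability mass onto the largest set $S_1$ keeps the distribution feasible and optimal: since $R(\varnothing)=0\le R(S_1)$ the expected revenue does not decrease, and since $|\varnothing\cap C_k|=0\le |S_1\cap C_k|$ for all $k$, every covering constraint is preserved. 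The resulting optimal solution is supported on distinct nonempty nested sets, whose cardinalities $|S_1|>\dots>|S_{m'}|\ge 1$ are distinct integers in $\{1,\dots,n\}$; hence $m'\le n$.

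Second, I would freeze this nested support, say $T_1\supsetneq\dots\supsetneq T_{m'}$, and restrict~\eqref{prob:randomized_single_segment} to distributions over $\{T_1,\dots,T_{m'}\}$. This yields the linear program $\max_{\mathbf q\ge 0}\sum_j R(T_j)q_j$ subject to $\sum_j |T_j\cap C_k|\,q_j\ge \ell_k$ for all $k$ and $\sum_j q_j=1$, in the $m'$ variables $q_1,\dots,q_{m'}$. The reduced solution from the previous step is feasible for this program and attains the optimal value of~\eqref{prob:randomized_single_segment}, so the two optimal values coincide. This program has only $K+1$ functional constraints besides nonnegativity, so a basic optimal solution (a vertex of the feasible polytope) has at most $\min\{K+1,m'\}\le\min\{K+1,n\}$ positive components. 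That basic solution is an optimal solution of~\eqref{prob:randomized_single_segment} offering at most $\min\{K+1,n\}$ assortments, as desired.

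The main obstacle is obtaining \emph{both} bounds in a single solution rather than two unrelated ones. Lemma~\ref{lemma:nested_support} by itself only yields a nested chain, which could have length up to $n+1$ once $\varnothing$ is allowed, while a basic solution of the full exponential-size linear program gives at most $K+1$ assortments but with no control over nestedness, hence no control by $n$. The restriction-then-vertex step is what fuses the two controls, and the $\varnothing$-elimination step is what sharpens $n+1$ to $n$; crucially it must be performed \emph{before} taking the vertex, so that the frozen support already excludes the empty assortment.
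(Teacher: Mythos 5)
Your proof is correct, and it relies on the same two ingredients as the paper's proof --- Lemma~\ref{lemma:nested_support} and a basic-solution (vertex) counting argument against the $K+1$ functional constraints --- but it composes them differently. The paper produces two \emph{unrelated} optimal solutions: a basic optimal solution of the full, exponential-size LP, which has at most $K+1$ positive entries, and a nested optimal solution from Lemma~\ref{lemma:nested_support}, supported on at most $n$ assortments; the stated bound follows by taking whichever of the two witnesses is relevant. Your restriction-then-vertex step instead fuses the two controls into a \emph{single} optimal solution that is simultaneously nested and supported on at most $\min\{K+1,n\}$ assortments, a slightly stronger conclusion than the proposition literally requires. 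Your $\varnothing$-elimination step also patches a genuine (if minor) imprecision in the paper: a chain of distinct nested subsets of an $n$-element ground set can have length $n+1$ once the empty assortment is allowed, so the claim that a nested optimal solution has support size at most $n$ implicitly needs exactly the mass-shifting argument you spell out (move $q(\varnothing)$ onto the largest set, which preserves the covering constraints since $|\varnothing\cap C_k|=0$ and cannot decrease revenue since $R(\varnothing)=0$). The price of your route is the extra, easy verification that the restricted LP attains the same optimal value and has a vertex optimum (its feasible region is a nonempty bounded polytope, so this holds); the paper's route is shorter but delivers two separate witnesses rather than one.
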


\begin{proof}
The number of decision variables in the original form of~\eqref{prob:randomized_single_segment}~is 
$2^n$. There are $K+1$ equality and inequality constraints in~\eqref{prob:randomized_single_segment}~besides the constraints $q(S)\geq 0$ for all $S\subseteq\mathcal{N}$. Therefore any basic feasible solution of~\eqref{prob:randomized_single_segment}~has at least $2^n-K-1$ elements being $0$, thus has at most $K+1$ nonzero elements. By taking a basic feasible solution that is optimal to~\eqref{prob:randomized_single_segment}, we conclude that there exists an optimal solution $q^*(\cdot)$ to~\eqref{prob:randomized_single_segment}~ that has at most $K+1$ nonzero elements, and this optimal solution randomizes over at most $K+1$ assortments. 

Furthermore, by Lemma~\ref{lemma:nested_support}, there exists an optimal solution such that the assortments offered with positive probabilities are nested. This optimal solution is supported on at most $n$ assortments. Therefore we conclude that there exists an optimal solution that offers at most $\min\{K+1,n\}$ assortments with positive probability.
\end{proof}

We answer our second question with Proposition~\ref{prop:integrality_gap}, where we show that the ratio between the optimal expected revenue of~\eqref{prob:randomized_single_segment}~over that of~\eqref{prob:deterministic_single_segment}~can be arbitrarily large in the worst case. However, we would like to point out that the example in the proof of Proposition~\ref{prop:integrality_gap} is  a pathological instance. In the forthcoming numerical experiments in \cref{sec:numerical}, we numerically show that in practice the optimal revenue in the randomized and deterministic setting can be close to each other.

\begin{proposition}\label{prop:integrality_gap}
For any $M>0$, there exists an instance with only one category such that the optimal value of~\eqref{prob:randomized_single_segment}~is at least $M$ times larger than that of~\eqref{prob:deterministic_single_segment}.
\end{proposition}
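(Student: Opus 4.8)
The plan is to build a single-category instance that pits a product's revenue against its preference weight, so that the covering constraint is cheap to satisfy \emph{in expectation} but ruinous to satisfy \emph{deterministically}. The driving idea is the denominator of \eqref{eq:revenue_mnl}: forcing a product of huge weight but negligible revenue into an assortment inflates $1+\sum_{i\in S}v_i$ and crushes $R(S)$, yet a randomized policy can carry that product only occasionally while keeping almost all of its probability mass on a clean, high-revenue assortment.

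Concretely, I would take $\mathcal{N}=\{1,2,3\}$ with the single category $C_1=\{1,2\}$ and threshold $\ell_1=1$. The two category products are made ``bad'', with $r_1=r_2=0$ and $v_1=v_2=V$ for a large parameter $V>0$ to be chosen at the end; product $3$ is ``good'' but lies outside the category, with $r_3=1$ and $v_3=1$. The first step is to upper bound the optimal value of~\eqref{prob:deterministic_single_segment}. Every feasible assortment $S$ satisfies $|S\cap C_1|\ge 1$, hence contains product $1$ or $2$, which adds $0$ to the numerator and at least $V$ to the denominator of \eqref{eq:revenue_mnl}. Since product $3$ is the only one with positive revenue, the numerator is at most $r_3v_3=1$ and the denominator is at least $1+v_3+V=2+V$; the best feasible choice is $\{1,3\}$ (equivalently $\{2,3\}$), and including any additional bad product only enlarges the denominator. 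Thus the optimum of~\eqref{prob:deterministic_single_segment} is exactly $1/(2+V)$, which tends to $0$.

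The second step is to lower bound the optimum of~\eqref{prob:randomized_single_segment} by exhibiting one feasible distribution, which suffices since the problem is a maximization. I would offer $\{3\}$ with probability $1/2$ and $\{1,2,3\}$ with probability $1/2$. The expected number of category products offered is $\tfrac12\cdot 0+\tfrac12\cdot 2=1\ge \ell_1$, so the distribution is feasible, and its expected revenue is $\tfrac12 R(\{3\})+\tfrac12 R(\{1,2,3\})=\tfrac12\cdot\tfrac12+\tfrac12\cdot\tfrac{1}{2+2V}>\tfrac14$, a bound independent of $V$. Combining the two steps, the ratio of the optimum of~\eqref{prob:randomized_single_segment} to that of~\eqref{prob:deterministic_single_segment} exceeds $(2+V)/4$, so choosing $V=4M$ makes it exceed $M$ and proves the claim.

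The arithmetic here is routine, so the only real content is the modeling insight in the construction; the ``hard part'' is conceptual rather than technical, namely engineering the revenue--weight tension so that one rarely-offered heavy assortment covers the constraint in expectation while leaving a pure high-revenue assortment to carry the bulk of the probability. The one point I would verify with care is the deterministic upper bound: that no clever feasible assortment can dodge the weight-$V$ penalty. This holds because feasibility forces an intersection with $C_1$, and the category products contribute only to the denominator, so $1/(2+V)$ cannot be beaten. If one prefers strictly positive revenues, replacing $r_1=r_2=0$ by a sufficiently small $\varepsilon>0$ changes nothing asymptotically.
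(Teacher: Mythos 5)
Your proof is correct and follows essentially the same approach as the paper: an explicit three-product, single-category instance exploiting the tension between revenue and preference weight, with a 50/50 randomization between a lean high-revenue assortment and the full product set that satisfies the covering constraint only in expectation. The paper's instance differs only in cosmetic details (it places the high-revenue product inside the category with threshold $2$ and uses small-but-positive revenues for the heavy products), while your variant with the good product outside the category and zero-revenue heavy products makes the deterministic upper bound $1/(2+V)$ exact and slightly cleaner, but the underlying argument is the same.
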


\begin{proof}
Consider any $M>1$. Suppose $\mathcal{N}=\{1,2,3\}$, $K=1$, $C_1=\{1,2,3\}$, $\ell_1=2$. The revenue and preference weights are set to be
\begin{equation*}
r_1=4M^2,\ v_1=1/M,\ r_2=r_3=1/2,\ v_2=v_3=8M.
\end{equation*}
It is easy to verify that the optimal assortment to~\eqref{prob:deterministic_single_segment}~is $\{1,2\}$, and the optimal expected revenue of~\eqref{prob:deterministic_single_segment}~is
\begin{equation*}
R(\{1,2\})=\dfrac{r_1v_1+r_2v_2}{1+v_1+v_2}=\dfrac{4M+4M}{1+1/M+8M}=\dfrac{8M}{8M+1/M+1}<1.
\end{equation*}
Furthermore, $q(\{1,2,3\})=1/2$, $q(\{1\})=1/2$ is a feasible solution to~\eqref{prob:randomized_single_segment}, and the corresponding expected revenue is
\begin{equation*}
\dfrac{1}{2}R(\{1,2,3\})+\dfrac{1}{2}R(\{1\})>\dfrac{1}{2}R(\{1\})=\dfrac{1}{2}\dfrac{4M}{1+1/M}>M>M\cdot R(\{1,2\}).
\end{equation*}
Therefore in our instance, the optimal expected revenue to~\eqref{prob:randomized_single_segment} is at least $M$ times larger than the optimal expected revenue to~\eqref{prob:deterministic_single_segment}.
\end{proof}

{  
\begin{remark}[Extension to Assortment Customization over a Finite Number of Customers]
As an extension of the randomized single segment problem, another problem of potential practical interest is the assortment customization problem over $T$ homogeneous customers. In this problem, There are $T$ customers, and the choice of each customer follows the same MNL model. Instead of randomizing over assortments and satisfying covering constraints in expectation as in \eqref{prob:randomized_single_segment}, in this problem, the seller needs to deterministically customize an assortment for each of the $T$ customers, and the covering constraints should also be deterministically satisfied. We remark that a similar setting is also considered in \cite{barre2023assortment}. In Appendix~\ref{sec:assortment_customization}, we provide a rounding algorithm based on an optimal solution of \eqref{prob:randomized_single_segment} for the assortment customization problem, and prove that the algorithm returns a $1-(nv_{\max}+1)/Tv_{\min}$-approximation algorithm of the problem. The approximation ratio of the algorithm converges to $1$ as the number of customers $T$ gets large, suggesting that the algorithm is asymptotically optimal.
\end{remark}
}

\section{Multi-Segment Assortment Optimization}\label{sec:multi_segment}

In this section, we extend our discussion to a model with multiple customer segments. We assume that there are $m$ customer segments indexed by $\mathcal{M}=\{1,2,\dots,m\}$. Each customer segment $j\in\mathcal{M}$ is associated with an arrival probability $\theta_j$ as well as a MNL model with preference weights $v_{ij}>0$  where $v_{ij}$ is the preference weight of a customer in segment $j$ for product $i\in\mathcal{N}$. For each customer segment $j\in\mathcal{M}$, the preference weight of the no-purchase option is normalized to be $v_{0j}=1$. The expected revenue gained by offering assortment $S$ to a customer from segment $j$ is given by
\begin{equation*}
R_j(S)=\dfrac{\sum_{i\in S}r_iv_{ij}}{1+\sum_{i\in S}v_{ij}}.
\end{equation*}

Similar to previous sections, the products are categorized into categories $C_1,C_2,\dots,C_K$, and each category is associated with a minimum threshold $\ell_k$. 
In the deterministic version of the multi-segment problem, the decision of the seller is to offer an assortment $S_j$  to each customer segment $j\in\mathcal{M}$ in order to maximize total expected revenue $\sum_{j\in\mathcal{M}}\theta_jR_j(S_j)$.
Our covering constraint requires that the expected number of products that we offer from category $C_k$  should be at least $\ell_k$, where expectation is taken over  the arrival probabilities of customer segments. In particular, the expected number of products from category $C_k$ {  offered} to customers is given by $\sum_{j\in\mathcal{M}}\theta_j|S_j\cap C_k|$, and the covering constraints can be written as $\sum_{j\in\mathcal{M}}\theta_j|S_j\cap C_k|\geq \ell_k$.

We would like to mention that the randomized version of the multi-segment model, where we decide a distribution of assortment for each segment instead of a single assortment, is analogous to the single-segment model, and can be solved in polynomial time using a linear program similar to~\eqref{prob:lp_randomized}. Thus we defer the complete discussion of the randomized multi-segment problem to Appendix~\ref{sec:multi_segment_randomized}. In the rest of this section, we focus on the deterministic multi-segment problem. We refer to this problem as~\eqref{prob:deterministic_multi_segment}, and its formal definition is provided by


\begin{equation}\label{prob:deterministic_multi_segment}\tag{m-DAOC}
\begin{aligned}
&\max_{S_j\subseteq\mathcal{N},\,\forall j\in\mathcal{M}}&&{\sum_{j\in\mathcal{M}}\theta_jR_j(S_j)}\\
&\text{s.t.}&&{\sum_{j\in\mathcal{M}}\theta_j|S_j\cap C_k|\geq \ell_k,\ \forall k\in\{1,2,\dots,K\}.}
\end{aligned}
\end{equation}

{  We would like to remark that the covering constraints in~\eqref{prob:deterministic_multi_segment}~is introduced across all customer segments rather than over individual customer segment. Since the covering constraints do not specify the lower bounds of $|S_j\cap C_k|$ for every individual customer segment $j$, \eqref{prob:deterministic_multi_segment} cannot be solved by running Algorithm~\ref{alg:single_segment} individually for every customer segment, thus a new algorithm is needed to solve the problem.} We will show that the complexity of~\eqref{prob:deterministic_multi_segment}~depends on $m$, the number of customer segments. In the following, we distinguish two different cases: constant (small) $m$ and general (large) $m$.

\subsection{Constant Number of Customer Segments}
When trying to solve~\eqref{prob:deterministic_multi_segment}, it is tempting to directly adopt the idea of \cref{alg:single_segment} in the deterministic single segment setting: first, use greedy algorithm to find a sequence of assortments $\{\hat{S}_j\}_{j\in\mathcal{M}}$ that approximately minimizes $\sum_{j\in\mathcal{M}}\sum_{i\in S_j}v_{ij}$  subject to the covering constraints; second, find the optimal expansion $S_j$ of $\hat{S}_j$ for each customer segment. However, we use the following example to show that the approach mentioned above can perform arbitrarily poorly.

\begin{example}\label{example:multi_segment}
Suppose $\mathcal{M}=\{1,2\}$, $\theta_1=\theta_2=1/2$, $\mathcal{N}=\{1,2,3\}$. The revenue and preference weights of the products are defined as
\begin{equation*}
r_1=M/\epsilon,\ r_2=r_3=\epsilon/M;\ v_{11}=v_{12}=\epsilon,\ v_{21}=v_{22}=2M,\ v_{31}=v_{32}=M,
\end{equation*}
where $\epsilon<1/10$ and $M>10$. There is only one category $C_1=\{2,3\}$ and we set $\ell_1=1$. In this setting, the output of greedy algorithm that minimizes $\sum_{j\in\mathcal{M}}\sum_{i\in S_j}v_{ij}$ subject to covering constraints is $\hat{S}_1=\hat{S}_2=\{3\}$, and the corresponding optimal expansion is $S_1=S_2=\{1,3\}$. The corresponding revenue is
\begin{equation*}
\theta_1R_1(\{1,3\})+\theta_2R_2(\{1,3\})=\dfrac{\epsilon+M}{1+M+\epsilon}<1.
\end{equation*}
However, $S_1=\{1,2,3\}$, $S_2=\{1\}$ is also a feasible assortment, and its revenue is at least $\theta_2R_2(\{1\})=M/(2+2\epsilon)>5M/11$. By setting $M\to\infty$ we have that the approach mentioned above can perform arbitrarily poorly. \hfill$\Box$
\end{example}

Example~\ref{example:multi_segment} shows that in the multi-segment problem, simply minimizing the sum of preference weights across all customer segments subject to covering constraints is not always a good idea. Note that minimizing the weighted sum  $\sum_{j\in\mathcal{M}}\sum_{i\in S_j}\theta_jv_{ij}$ can be as bad as minimizing $\sum_{j\in\mathcal{M}}\sum_{i\in S_j}v_{ij}$ because in Example~\ref{example:multi_segment}~these two approaches are equivalent as $\theta_j$ are equal. 

To tackle this problem, we adopt a slightly different approach. In particular, in the first step, instead of minimizing the sum of preference weights, we add a weight $\gamma_j$ for customer segment $j$ and consider the following weighted set cover problem
\begin{equation}\label{prob:weighted_set_cover_correct_weight}
\begin{aligned}
&\min_{S_j\subseteq\mathcal{N},\,\forall j\in\mathcal{M}}&&{\sum_{j\in\mathcal{M}}\sum_{i\in S_j}\gamma_jv_{ij}}\\
&\textup{s.t.}&&{\sum_{j\in\mathcal{M}}\theta_j|S_j\cap C_k|\geq \ell_k,\ \forall k\in\{1,2,\dots,K\}.}
\end{aligned}
\end{equation}
We use greedy algorithm to find the approximate solution $\{\hat{S}_j\}_{j\in\mathcal{M}}$ to~\eqref{prob:weighted_set_cover_correct_weight}. In the second step, we find the optimal expansion $S_j$ of $\hat{S}_j$ for each customer segment $j$.

In the following, we prove that if the weights are set to be $\gamma_j=\theta_jR_j(S_j^*)/\sum_{i\in S_j^*}v_i$, where $\{S_j^*\}_{j\in\mathcal{M}}$ is the optimal solution to~\eqref{prob:deterministic_multi_segment}, then weights will strike the right balance across different customer segments and output an $1/(\log K+2)$-approximation to the problem.

\begin{lemma}\label{prop:correct_weights}
Suppose $\gamma_j=\theta_jR_j(S_j^*)/\sum_{i\in S_j^*}v_i$, and $\{\hat{S}_j\}_{j \in {\cal M}}$ is an $\alpha$-approximation to~\eqref{prob:weighted_set_cover_correct_weight}. Let $S_j=\argmax_{S\supseteq\hat{S}_j}R_j(S)$ be the optimal expansion of $\hat{S}_j$, then $\{S_j\}_{j\in\mathcal{M}}$ is a $1/(\alpha+1)$-approximation to~\eqref{prob:deterministic_multi_segment}.
\end{lemma}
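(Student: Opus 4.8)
The plan is to mimic the single-segment argument from Theorem~\ref{thm:single_segment}, but now summed across segments with the segment-specific weights $\gamma_j$ playing a crucial role. Let $\{S_j^*\}_{j\in\mathcal{M}}$ be the optimal solution to~\eqref{prob:deterministic_multi_segment} and recall $\gamma_j=\theta_j R_j(S_j^*)/\sum_{i\in S_j^*}v_i$. The first observation is that $\{S_j^*\}_{j\in\mathcal{M}}$ is itself feasible for the weighted set cover problem~\eqref{prob:weighted_set_cover_correct_weight}, so its objective value $\sum_{j\in\mathcal{M}}\sum_{i\in S_j^*}\gamma_j v_{ij}$ upper-bounds the optimal value of~\eqref{prob:weighted_set_cover_correct_weight}. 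Since $\{\hat S_j\}_{j\in\mathcal{M}}$ is an $\alpha$-approximation, I would first establish the chain
\begin{equation*}
\sum_{j\in\mathcal{M}}\gamma_j\sum_{i\in\hat S_j}v_{ij}\;\leq\;\alpha\sum_{j\in\mathcal{M}}\gamma_j\sum_{i\in S_j^*}v_{ij}.
\end{equation*}

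Next I would unwind the definition of $\gamma_j$ on the right-hand side. Substituting $\gamma_j=\theta_j R_j(S_j^*)/\sum_{i\in S_j^*}v_i$ makes the factor $\sum_{i\in S_j^*}v_{ij}$ cancel against the denominator, collapsing the right-hand side to $\alpha\sum_{j\in\mathcal{M}}\theta_j R_j(S_j^*)$, which is exactly $\alpha$ times the optimal value of~\eqref{prob:deterministic_multi_segment}. The key point is that the choice of $\gamma_j$ is engineered precisely so that this cancellation occurs, turning a weighted-preference bound into a revenue bound. This reduces the whole argument to controlling, for each segment individually, the quantity $\gamma_j\sum_{i\in\hat S_j}v_{ij}$ from below by the revenue $\theta_j R_j(S_j)$ of its optimal expansion $S_j$.

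The main step, and the place where Lemma~\ref{lemma:optimal_expansion} enters, is to show the per-segment inequality. Applying Lemma~\ref{lemma:optimal_expansion} within segment $j$ (with the segment-$j$ preference weights $v_{ij}$, and with $\hat S_j$ in the role of $S_1$ and $S_j^*$ in the role of $S_2$) should give a bound relating $\bar R_j(\hat S_j)=R_j(S_j)$ to $R_j(S_j^*)$, but the clean way to combine everything is to bound $\theta_j R_j(S_j)$ below by $\gamma_j\sum_{i\in\hat S_j}v_{ij}$ directly. Concretely, I expect to argue that $\theta_j R_j(S_j)\geq\gamma_j\sum_{i\in\hat S_j}v_{ij}/\alpha'$ is not quite the right form; rather, the cleaner route is to sum the per-segment expansions and invoke a weighted version of the knapsack/optimal-expansion trick so that $\sum_{j}\theta_j R_j(S_j)\geq \frac{1}{\alpha+1}\sum_j\theta_j R_j(S_j^*)$. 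I would therefore prove, for each $j$, that $R_j(S_j)=\bar R_j(\hat S_j)\geq \theta_j R_j(S_j^*)/\bigl(\gamma_j^{-1}\cdot(\text{something})\bigr)$, and then aggregate.

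The hard part will be handling the $+1$ in the denominator correctly across segments simultaneously: Lemma~\ref{lemma:optimal_expansion} gives a factor $1/(\alpha+1)$ for a single pair of assortments, but here the $\alpha$ is a \emph{global} set-cover approximation ratio shared across all segments, whereas the ``$+1$'' from the no-purchase normalization is present in \emph{each} segment's revenue denominator. I would need to set up the knapsack subproblem per segment (maximizing $\sum_{i\in S}r_i v_{ij}$ subject to $\sum_{i\in S}v_{ij}\leq\sum_{i\in S_j^*}v_{ij}$), take optimal solutions $S_j'$, form the expansions $\hat S_j\cup S_j'$, and then carefully track that the aggregated denominator only inflates the weighted-preference sum by the factor $\alpha$ coming from set cover, so that the combined bound yields $1/(\alpha+1)$ overall. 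Verifying that the weighted sum $\sum_j\theta_j R_j(S_j^*)$ reassembles correctly after the $\gamma_j$ cancellation, without losing the per-segment ``$+1$'' in an uncontrolled way, is the delicate bookkeeping I would expect to occupy most of the proof.
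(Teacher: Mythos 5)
Your setup correctly identifies most of the paper's ingredients: the feasibility of $\{S_j^*\}_{j\in\mathcal{M}}$ for~\eqref{prob:weighted_set_cover_correct_weight}, the $\alpha$-approximation chain, the engineered cancellation $\gamma_j\sum_{i\in S_j^*}v_{ij}=\theta_jR_j(S_j^*)$, and the per-segment optimal-expansion bound (whether invoked as Lemma~\ref{lemma:optimal_expansion} as a black box, as the paper does, or re-derived via the per-segment knapsack, as you propose, is immaterial). However, there is a genuine gap at exactly the point you defer to ``delicate bookkeeping'': the aggregation across segments. After the per-segment step you have, for each $j$,
\begin{equation*}
\theta_j R_j(S_j)\;\geq\;\frac{\theta_j R_j(S_j^*)}{1+t_j},\qquad t_j:=\frac{\sum_{i\in\hat S_j}v_{ij}}{\sum_{i\in S_j^*}v_{ij}},
\end{equation*}
where the $t_j$ are heterogeneous across segments, and the set-cover bound (after the $\gamma_j$ cancellation) controls only their \emph{weighted average}: with weights $w_j\propto\theta_jR_j(S_j^*)$ it gives $\sum_j w_j t_j\leq\alpha$. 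Passing from this average bound to $\sum_j w_j/(1+t_j)\geq 1/(1+\alpha)$ is not algebraic tracking of denominators; it is Jensen's inequality applied to the convex function $x\mapsto 1/(1+x)$. This is the crux of the paper's proof, and it is also the reason the weights $\gamma_j$ are chosen as they are --- precisely so that the Jensen weights come out to be $\theta_jR_j(S_j^*)$. Note the step genuinely requires convexity: if $1/(1+x)$ were concave the implication would fail, so no amount of careful bookkeeping of the per-segment ``$+1$'' terms can substitute for it. To close your proof, sum your per-segment bounds, apply Jensen with weights $\theta_jR_j(S_j^*)$ to obtain
\begin{equation*}
\sum_{j\in\mathcal{M}}\theta_jR_j(S_j)\;\geq\;\Bigl(\sum_{j\in\mathcal{M}}\theta_jR_j(S_j^*)\Bigr)\cdot\frac{1}{\bigl(\sum_{j}\gamma_j\sum_{i\in\hat S_j}v_{ij}\bigr)\big/\bigl(\sum_{j}\theta_jR_j(S_j^*)\bigr)+1},
\end{equation*}
and then insert your inequality $\sum_j\gamma_j\sum_{i\in\hat S_j}v_{ij}\leq\alpha\sum_j\theta_jR_j(S_j^*)$ to get the factor $1/(\alpha+1)$.
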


\begin{proof}
First, a direct application o Lemma~\ref{lemma:optimal_expansion}, using the sets $\hat{S}_j$ and $S^*_j$ implies
\begin{equation*}
\bar{R}_j(\hat{S}_j)=R_j(S_j)\geq \dfrac{1}{V_{j}(\hat{S}_j)/V_{j}(S_j^*)+1}R_j(S_j^*),
\end{equation*}
where we use the notation $V(S)=\sum_{i \in S} v_i$.
Taking the sum over all $j\in\mathcal{M}$ with weights $\{\theta_j\}_{j\in\mathcal{M}}$, we get
\begin{equation*}
\sum_{j\in\mathcal{M}}\theta_jR_j(S_j)\geq \sum_{j\in\mathcal{M}}\dfrac{\theta_jR_j(S_j^*)}{V_{j}(\hat{S}_j)/V_{j}(S_j^*)+1}.
\end{equation*}
Since $1/(x+1)$ is a convex function, by Jensen's inequality we have
\begin{align*}
\sum_{j\in\mathcal{M}}\dfrac{\theta_jR_j(S_j^*)}{V_{j}(\hat{S}_j)/V_{j}(S_j^*)+1}&\geq\left(\sum_{j\in\mathcal{M}}\theta_jR_j(S_j^*)\right)\cdot\dfrac{1}{(\sum_{j\in\mathcal{M}}\theta_jR_j(S_j^*)V_j(\hat{S}_j)/V_{j}(S_j^*))/(\sum_{j\in\mathcal{M}}\theta_jR_j(S_j^*))+1}\\
&=\left(\sum_{j\in\mathcal{M}}\theta_jR_j(S_j^*)\right)\cdot \dfrac{1}{(\sum_{j\in\mathcal{M}}\gamma_jV_j(\hat{S}_j))/(\sum_{j\in\mathcal{M}}\theta_jR_j(S_j^*))+1}.
\end{align*}
Since $\{S_j^*\}_{j\in\mathcal{M}}$ is a feasible solution to~\eqref{prob:weighted_set_cover_correct_weight}, and $\{\hat{S}_j\}_{j\in\mathcal{M}}$ is an $\alpha$-approximation to~\eqref{prob:weighted_set_cover_correct_weight}, using the definition of $\{\gamma_j\}_{j\in\mathcal{M}}$ we have
\begin{equation*}
\sum_{j\in\mathcal{M}}\gamma_jV_j(\hat{S}_j)\leq \alpha\sum_{j\in\mathcal{M}}\gamma_jV_j(S_j^*)=\alpha\sum_{j\in\mathcal{M}}\theta_j\dfrac{R_j(S_j^*)}{V_j(S_j^*)}V_j(S_j^*)=\alpha\sum_{j\in\mathcal{M}}\theta_jR_j(S_j^*).
\end{equation*}
Therefore
\begin{equation*}
\sum_{j\in\mathcal{M}}\theta_jR_j(S_j)\geq \left(\sum_{j\in\mathcal{M}}\theta_jR_j(S_j^*)\right)\cdot\dfrac{1}{(\alpha\sum_{j\in\mathcal{M}}R_j(S_j^*))/(\sum_{j\in\mathcal{M}}R_j(S_j^*))+1}=\dfrac{1}{\alpha+1}\sum_{j\in\mathcal{M}}\theta_jR_j(S_j^*),
\end{equation*}
which implies that $\{S_j\}_{j\in\mathcal{M}}$ is a $1/(\alpha+1)$-approximation to~\eqref{prob:deterministic_multi_segment}.
\end{proof}

Although Lemma~\ref{prop:correct_weights} provides a set of weights  which strike the right balance across different customer segments, the precise weights defined in Lemma~\ref{prop:correct_weights} depend on the optimal assortments, which is obviously unknown. For constant number of customer segments, we can take guesses for the correct weights $\{\gamma_j\}_{j\in\mathcal{M}}$ via a grid search, resulting in an algorithm with runtime exponential in the number of customer segments $m$. In particular, we define $v=\min\{v_{ij}:v_{ij}>0, i \in {\cal N}, j \in {\cal M}\}$, $V=\max\{v_{ij}: i \in {\cal N}, j \in {\cal M}\}$, $r=\min\{r_i:i\in\mathcal{N}\}$, $R=\max\{r_i:i\in\mathcal{N}\}$. For each $j\in\mathcal{M}$, we define the geometric grid $\Gamma_{\epsilon,j}$ as
\begin{equation*}
\Gamma_{\epsilon,j}=\left\{\dfrac{\theta_jr v}{(1+nV)^2}\cdot (1+\epsilon)^\ell\right\}_{\ell=1}^L,
\end{equation*}
where $L=\lceil \log_{1+\epsilon}((1+nV)^2R/rv)\rceil$. This grid starts from a lower bound of $\gamma_j$, finishes at an upper bound of $\gamma_j$ and has multiplicative steps of $1+\epsilon$.
We further define $\Gamma_{\epsilon,\mathcal{M}}=\prod_{j\in\mathcal{M}}\Gamma_{\epsilon,j}$.

Our algorithm is summarized in \cref{alg:constant_segments}. Note that the runtime of \cref{alg:constant_segments} is polynomial in the number of products and categories and $1/\epsilon$ but exponential in the number of customer segments.

\begin{algorithm}[!ht]
\caption{Multi-segment assortment optimization with constant number of segments}
\SingleSpacedXI
\label{alg:constant_segments}
\begin{algorithmic}
\For{$\hat{\boldsymbol{\gamma}}\in\Gamma_{\epsilon,\mathcal{M}}$}
\State Initialize $\hat{S}_{j,\hat{\boldsymbol{\gamma}}}\leftarrow \varnothing$ for all $j\in\mathcal{M}$
\While{there exists $k\in\{1,2,\dots,K\}$ such that $\sum_{j\in\mathcal{M}}\theta_j|\hat{S}_{j,\hat{\boldsymbol{\gamma}}}\cap C_k|<\ell_k$}
\State Set $c_{ij}=\theta_j|\{k\in\{1,2,\dots,K\}:i\in C_k,\,\sum_{j\in\mathcal{M}}\theta_j|\hat{S}_{j,\hat{\boldsymbol{\gamma}}}\cap C_k|<\ell_k\}|$
\State Set $(\hat{i},\hat{j})=\argmax_{i\in\mathcal{N},j\in\mathcal{M}}\hat{\gamma}_jv_{ij}/c_{ij}$
\State Update $\hat{S}_{\hat{j},\hat{\boldsymbol{\gamma}}}\leftarrow \hat{S}_{\hat{j},\hat{\boldsymbol{\gamma}}}\cup\{\hat{i}\}$
\EndWhile
\State Let $\bar{S}_{j,\hat{\boldsymbol{\gamma}}}$ be the optimal expansion of $\hat{S}_{j,\hat{\boldsymbol{\gamma}}}$, i.e.,
\begin{equation*}
\bar{S}_{j,\hat{\boldsymbol{\gamma}}}=\underset{S\supseteq \hat{S}_{j,\hat{\boldsymbol{\gamma}}}}{\mathrm{argmax}}\,R_j(S)
\end{equation*}
\EndFor
\State \textbf{return} $\{S_j\}_{j\in\mathcal{M}}$ that maximizes the expected revenue over $\{\{\bar{S}_{j,\hat{\boldsymbol{\gamma}}}\}_{j\in\mathcal{M}}:\hat{\boldsymbol{\gamma}}\in\Gamma_{\epsilon,\mathcal{M}}\}$.
\end{algorithmic}
\end{algorithm}

In the following, we prove that \cref{alg:constant_segments} is a $(1-\epsilon)/(\log K+2)$-approximation to~\eqref{prob:deterministic_multi_segment}. In other words, for constant number of customer segments, \cref{alg:constant_segments} still achieves the same approximation ratio as \cref{alg:single_segment}, and matches the hardness result of~\eqref{prob:deterministic_single_segment} provided in Theorem~\ref{thm:hardness_single_segment} up to a constant.

\begin{theorem} \label{thm:multi-segement-constant-m}
Algorithm~\ref{alg:constant_segments}~returns a $(1-\epsilon)/(\log K+2)$-approximation to~\eqref{prob:deterministic_multi_segment}. The runtime of Algorithm~\ref{alg:constant_segments}~is $O(m^2n^2K\cdot(\log((1+nV)^2R/rv))^m/\epsilon^m)$.
\end{theorem}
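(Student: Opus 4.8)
The plan is to reduce everything to Lemma~\ref{prop:correct_weights} plus a discretization argument. Write $V_j(S)=\sum_{i\in S}v_{ij}$ and let $\{S_j^*\}$ be optimal for~\eqref{prob:deterministic_multi_segment}, so the ``correct'' weights are $\gamma_j^*=\theta_j R_j(S_j^*)/V_j(S_j^*)$. Lemma~\ref{prop:correct_weights} already says that if we could run greedy on~\eqref{prob:weighted_set_cover_correct_weight} with the exact weights $\gamma^*$ and get an $\alpha$-approximate cover, the optimal expansions would be a $1/(\alpha+1)$-approximation of~\eqref{prob:deterministic_multi_segment}. Two ingredients remain: (i) that greedy is an $H_K$-approximation for the $\hat{\boldsymbol{\gamma}}$-weighted cover, with $H_K\le\log K+1$ (the same Lovász bound as in Theorem~\ref{thm:single_segment}); and (ii) that enumerating $\Gamma_{\epsilon,\mathcal{M}}$ finds weights close enough to $\gamma^*$ that running greedy with them inflates the cover cost by only a factor $1+\epsilon$. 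Since $\gamma^*$ is unknown, (ii) is where the grid search enters.

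For (ii) I would first show $\gamma^*$ lies in the range the grid spans. Bounding crudely, for nonempty $S_j^*$ the numerator $\sum_{i\in S_j^*}r_iv_{ij}$ lies in $[rv,\,R\,V_j(S_j^*)]$ and $(1+V_j(S_j^*))V_j(S_j^*)$ lies in $[v,(1+nV)^2]$, giving
\begin{equation*}
\frac{\theta_j r v}{(1+nV)^2}\ \le\ \gamma_j^*\ =\ \theta_j\frac{\sum_{i\in S_j^*}r_iv_{ij}}{(1+V_j(S_j^*))\,V_j(S_j^*)}\ <\ \theta_j R .
\end{equation*}
The lower bound is the base of $\Gamma_{\epsilon,j}$, and the top entry $\tfrac{\theta_j rv}{(1+nV)^2}(1+\epsilon)^L\ge\theta_j R$ by the choice of $L$, so $\gamma_j^*$ is bracketed by the extreme grid values. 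Because consecutive grid points differ multiplicatively by $1+\epsilon$, there is $\hat\gamma_j\in\Gamma_{\epsilon,j}$ with $\gamma_j^*\le\hat\gamma_j\le(1+\epsilon)\gamma_j^*$; and since $\Gamma_{\epsilon,\mathcal{M}}$ is the product grid, the vector $\hat{\boldsymbol{\gamma}}=(\hat\gamma_1,\dots,\hat\gamma_m)$ is one of the points the algorithm enumerates.

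Let $\{\hat S_{j,\hat{\boldsymbol{\gamma}}}\}$ be the greedy cover for this $\hat{\boldsymbol{\gamma}}$. Chaining $\gamma_j^*\le\hat\gamma_j$, the $H_K$-guarantee of greedy, feasibility of $\{S_j^*\}$ for~\eqref{prob:weighted_set_cover_correct_weight}, and $\hat\gamma_j\le(1+\epsilon)\gamma_j^*$ gives
\begin{equation*}
\sum_{j}\gamma_j^* V_j(\hat S_{j,\hat{\boldsymbol{\gamma}}})\ \le\ \sum_{j}\hat\gamma_j V_j(\hat S_{j,\hat{\boldsymbol{\gamma}}})\ \le\ H_K\sum_j\hat\gamma_j V_j(S_j^*)\ \le\ H_K(1+\epsilon)\sum_j\gamma_j^* V_j(S_j^*),
\end{equation*}
so $\{\hat S_{j,\hat{\boldsymbol{\gamma}}}\}$ is an $H_K(1+\epsilon)$-approximate cover \emph{with respect to the exact weights} $\gamma^*$, exactly the hypothesis of Lemma~\ref{prop:correct_weights} with $\alpha=H_K(1+\epsilon)$. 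Hence its expansions $\{\bar S_{j,\hat{\boldsymbol{\gamma}}}\}$ are a $1/(H_K(1+\epsilon)+1)$-approximation. The elementary identity $(1-\epsilon)\big(H_K(1+\epsilon)+1\big)=(H_K+1)-\epsilon(1+\epsilon H_K)<H_K+1$ yields $1/(H_K(1+\epsilon)+1)\ge(1-\epsilon)/(H_K+1)\ge(1-\epsilon)/(\log K+2)$. As Algorithm~\ref{alg:constant_segments} returns the best solution over all grid points, it does at least as well as the one for $\hat{\boldsymbol{\gamma}}$, proving the guarantee. Segments with $S_j^*=\varnothing$ contribute zero to both the objective and the constraints, so they may be assigned any grid weight and only improve the bound; this degenerate case is handled separately.

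For the runtime, $\Gamma_{\epsilon,\mathcal{M}}$ has $L^m$ points with $L=\lceil\log_{1+\epsilon}((1+nV)^2R/(rv))\rceil=O\!\big(\log((1+nV)^2R/(rv))/\epsilon\big)$, supplying the factor $(\log((1+nV)^2R/(rv)))^m/\epsilon^m$. Per grid point, the greedy loop adds one product--segment pair per iteration, hence runs at most $mn$ times; each iteration recomputes the $O(mn)$ coverage counts $c_{ij}$ in $O(mnK)$ time and selects the extremal pair, so the loop costs $O(m^2n^2K)$, and the $m$ optimal expansions are dominated. Multiplying gives the stated $O(m^2n^2K\cdot(\log((1+nV)^2R/(rv)))^m/\epsilon^m)$. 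The main obstacle is precisely part (ii): one must verify that a coarse geometric grid still brackets the unknown $\gamma^*$ between the explicit endpoints $\theta_jrv/(1+nV)^2$ and $\theta_jR$, and that a multiplicative $(1+\epsilon)$ error in the weights perturbs only the \emph{cost} of the greedy cover and not its feasibility; propagating this clean slack through Lemma~\ref{prop:correct_weights} and the inequality $1/(H_K(1+\epsilon)+1)\ge(1-\epsilon)/(\log K+2)$ is what turns an $\epsilon$-error in the guessed weights into only an $\epsilon$-loss in the final ratio.
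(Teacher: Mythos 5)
Your proof is correct and follows essentially the same route as the paper's: bracket the unknown weights $\gamma_j^*$ by a grid point $\hat{\boldsymbol{\gamma}}$ with $\gamma_j^*\le\hat\gamma_j\le(1+\epsilon)\gamma_j^*$, use the Lov\'asz $H_K$-guarantee of greedy for the $\hat{\boldsymbol{\gamma}}$-weighted cover, pass the resulting $(1+\epsilon)H_K$ factor through Lemma~\ref{prop:correct_weights}, and conclude with $1/((1+\epsilon)H_K+1)\ge(1-\epsilon)/(\log K+2)$, plus the identical runtime accounting. You actually supply two details the paper leaves implicit — the explicit verification that $\gamma_j^*$ lies between the grid endpoints $\theta_j rv/(1+nV)^2$ and $\theta_j R$, and the handling of empty optimal assortments — which only strengthens the write-up.
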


\begin{proof}

By definition of $\Gamma_{\epsilon,\mathcal{M}}$, there exists  $\hat{\boldsymbol{\gamma}}\in\Gamma_{\epsilon,\mathcal{M}}$ such that $\gamma_j\leq\hat{\gamma}_j\leq (1+\epsilon)\gamma_j$ for all $j\in\mathcal{M}$. Note that $\{\hat{S}_{j,\hat{\boldsymbol{\gamma}}}\}_{j\in\mathcal{M}}$ are obtained in Algorithm~\ref{alg:constant_segments} using a greedy algorithm. Therefore,  $\{\hat{S}_{j,\hat{\boldsymbol{\gamma}}}\}_{j\in\mathcal{M}}$ give $H_K$-approximation for the equivalent of problem~\eqref{prob:weighted_set_cover_correct_weight} where we replace the coefficients   ${\boldsymbol{\gamma}}$ by $\hat{\boldsymbol{\gamma}}$.
Moreover, since $\gamma_j \leq \hat{\gamma}_j \leq (1+\epsilon){\gamma}_j$ for all $j\in\mathcal{M}$, then $\{\hat{S}_{j,\hat{\boldsymbol{\gamma}}}\}_{j\in\mathcal{M}}$ gives a $(1+\epsilon)H_K$-approximation for~\eqref{prob:weighted_set_cover_correct_weight}. 
Therefore, from Lemma \ref{prop:correct_weights}, we obtain that the optimal expansion of  $\{\hat{S}_{j,\hat{\boldsymbol{\gamma}}}\}_{j\in\mathcal{M}}$ gives $ \dfrac{1}{(1+\epsilon)H_K+1}$-approximation to~\eqref{prob:deterministic_multi_segment}.
This implies that the output of Algorithm~\ref{alg:constant_segments}~is a $(1-\epsilon)/(\log K+2)$-approximation to~\eqref{prob:deterministic_multi_segment}.

We further analyze the runtime of Algorithm~\ref{alg:constant_segments}. In the inner loop, the runtime for computing $\sum_{j\in\mathcal{M}}\theta_j|\hat{S}_{j,\hat{\boldsymbol{\gamma}}}\cap C_k|$ for each $k\in\{1,\dots,K\}$ is $O(mn)$. Therefore the runtime for identifying all categories $k\in\{1,2,\dots,K\}$ such that $\sum_{j\in\mathcal{N}}\theta_j|\hat{S}_{j,\hat{\boldsymbol{\gamma}}}\cap C_k|<\ell_k$ is $O(mnK)$. The runtime for computing $c_{ij}$ for each $i\in\mathcal{N}$ and $j\in\mathcal{M}$ is $O(K)$, thus the runtime for computing $c_{ij}$ for all $i\in\mathcal{N}$ and $j\in\mathcal{M}$ is $O(mnK)$. Given $\{c_{ij}\}_{i\in\mathcal{N},j\in\mathcal{M}}$, the runtime for computing $(\hat{i},\hat{j})$ is $O(mn)$. Therefore the total runtime of an iteration in the inner loop is $O(mnK)$. Since there are at most $mn$ iterations for computing $\{\hat{S}_{j,\hat{\boldsymbol{\gamma}}}\}_{j\in\mathcal{M}}$, for each $\hat{\boldsymbol{\gamma}}\in\Gamma_{\epsilon,\mathcal{M}}$ the total runtime for computing $\{\hat{S}_{j,\hat{\boldsymbol{\gamma}}}\}_{j\in\mathcal{M}}$ is $O(m^2n^2K)$. By Lemma 3.3 of~\cite{barre2023assortment}, given $\hat{S}_{j,\hat{\boldsymbol{\gamma}}}$, $\bar{S}_{j,\hat{\boldsymbol{\gamma}}}$ can be computed in $O(n)$ time for each $j\in\mathcal{M}$ and $\hat{\boldsymbol{\gamma}}\in\Gamma_{\epsilon,\mathcal{M}}$. Therefore the runtime for computing $\{\bar{S}_{j,\hat{\boldsymbol{\gamma}}}\}_{j\in\mathcal{M}}$ is $O(mn)$. Then we conclude that the total runtime for an iteration of the outer loop is $O(m^2n^2K)$. Finally, by definition $|\Gamma_{\epsilon,\mathcal{M}}|=L^m$ and we have
\begin{equation*}
L=O(\log_{1+\epsilon}((1+nV)^2R/rv))=O\left(\dfrac{\log((1+nV)^2R/rv)}{\log(1+\epsilon)}\right)=O\left(\dfrac{\log((1+nV)^2R/rv)}{\epsilon}\right).
\end{equation*}
We conclude that $L^m=O((\log((1+nV)^2R/rv))^m/\epsilon^m)$. Since the number of iterations of the outer loop is $L^m$, the total runtime for Algorithm~\ref{alg:constant_segments}~is $O(m^2n^2K\cdot(\log((1+nV)^2R/rv))^m/\epsilon^m)$.
\end{proof}

\subsection{General Number of Customer Segments}

In this section, we proceed to the case of general (large) number of customer segments. We show that \eqref{prob:deterministic_multi_segment} with general $m$ is harder than either~\eqref{prob:deterministic_single_segment}~or~\eqref{prob:deterministic_multi_segment}~in the constant $m$ case. Specifically, we prove in Theorem~\ref{thm:hardness_multi_segment} that with general $m$, \eqref{prob:deterministic_multi_segment} is NP-hard to approximate within a factor of $\Omega(1/m^{1-\epsilon})$ for any $\epsilon>0$. In particular, Theorem \ref{thm:hardness_multi_segment} shows that for general $m$, \eqref{prob:deterministic_multi_segment} does not admit an approximation that is sublinear in $m$. The proof of \cref{thm:hardness_multi_segment} is provided in Appendix~\ref{sec:thm:hardness_multi_segment} and uses reduction from the maximum independent set problem.

\begin{theorem}\label{thm:hardness_multi_segment}
For any $\epsilon>0$, it is NP-hard to approximate~\eqref{prob:deterministic_multi_segment}~within a factor of $\Omega(1/m^{1-\epsilon})$ even when each customer segment has an equal probability, unless $NP\subseteq BPP$.
\end{theorem}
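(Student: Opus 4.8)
\textbf{Proof Proposal for Theorem~\ref{thm:hardness_multi_segment}.}

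The plan is to reduce from the maximum independent set problem, which is known to be NP-hard to approximate within a factor of $\Omega(1/N^{1-\epsilon})$ for any $\epsilon>0$ on a graph with $N$ vertices. Given a graph $G=(V,E)$ with $|V|=N$ vertices and edge set $E$, I would construct an instance of~\eqref{prob:deterministic_multi_segment} in which the number of customer segments $m$ is polynomially related to $N$ (ideally $m=\Theta(N)$ so that the $\Omega(1/m^{1-\epsilon})$ inapproximability is inherited directly from that of independent set). The intuition is to encode each vertex of $G$ as a potential ``high-revenue'' choice made by some customer segment, and to use the covering constraints to forbid two adjacent vertices from being simultaneously profitable — thereby forcing the profitable choices to form an independent set.

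The key construction would be as follows. I would create one customer segment per vertex (so $m=N$), each arriving with equal probability $\theta_j=1/N$ as the theorem statement requires. For each vertex $i$ I would introduce a product whose revenue is large and whose preference weight is tuned so that, when offered to the corresponding segment, it contributes a dominant term to that segment's expected revenue $R_j(S_j)$. The edges of $G$ would be encoded through the categories $C_1,\dots,C_K$ and their thresholds $\ell_k$: for each edge $(i,i')\in E$ I would design a category and threshold that \emph{cannot be satisfied} by simultaneously offering the high-revenue product to both segment $i$ and segment $i'$. Concretely, one introduces for each edge a covering requirement that is feasible only if at least one of the two endpoints ``gives up'' its high-revenue product in favor of filling the covering quota with a low-revenue product. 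The arrival-probability-weighted form $\sum_{j}\theta_j|S_j\cap C_k|\geq\ell_k$ of the covering constraints must be calibrated carefully so that the covering budget forces exactly this either/or behavior. In this way, the set of segments that can extract their high revenue corresponds precisely to an independent set in $G$, and the total expected revenue $\sum_j \theta_j R_j(S_j)$ is, up to controllable error terms, proportional to the size of that independent set divided by $N$.

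I would then establish the two directions of the reduction. For soundness, given any feasible assortment family $\{S_j\}$ of the constructed instance, I would argue that the segments achieving high revenue form an independent set (any two adjacent such segments would violate the edge-category constraint), so the objective is bounded above by roughly $(\text{size of a max independent set})/N$ times the per-vertex revenue. For completeness, given an independent set $I$ in $G$, I would offer the high-revenue product to every segment $j\in I$ and fill the covering constraints cheaply (with negligible revenue impact) using low-revenue products on the remaining segments, yielding an objective of roughly $|I|/N$ times the per-vertex revenue. Matching these bounds shows that an $\alpha$-approximation for~\eqref{prob:deterministic_multi_segment} would yield an $\alpha$-approximation for maximum independent set, transferring the $\Omega(1/N^{1-\epsilon})=\Omega(1/m^{1-\epsilon})$ hardness.

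\textbf{Main obstacle.} The delicate part will be the gadget calibration: I must choose the revenues, the preference weights $v_{ij}$, the categories $C_k$, and the thresholds $\ell_k$ so that (i) the ``low-revenue padding'' used to satisfy covering constraints contributes only a vanishing fraction of the objective, so the objective really tracks the independent set size and not the covering overhead, and (ii) the weighted covering constraint $\sum_j\theta_j|S_j\cap C_k|\ge \ell_k$ enforces a genuine \emph{exclusion} between adjacent vertices rather than merely a soft penalty. Because the objective involves the nonlinear MNL ratios $R_j(S_j)=\sum_{i\in S_j}r_iv_{ij}/(1+\sum_{i\in S_j}v_{ij})$, I would need to verify that adding padding products does not inadvertently inflate a segment's denominator in a way that distorts the intended correspondence; the standard device is to make padding products have negligible preference weight and negligible revenue so that both numerator and denominator are essentially unperturbed. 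Ensuring all of these calibrations hold simultaneously, while keeping $m=\Theta(N)$ and $\theta_j$ uniform, is where the technical care lies.
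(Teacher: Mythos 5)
Your high-level plan coincides with the paper's: both reduce from maximum independent set with one segment per vertex, uniform arrival probabilities $\theta_j=1/m$, and one category per edge. But the heart of the reduction --- the gadget that turns covering constraints into an exclusion between adjacent vertices --- is where your proposal has a genuine gap, in two ways. First, covering constraints are monotone: adding products to any $S_j$ can only increase every quantity $\sum_{j}\theta_j|S_j\cap C_k|$, so there is no category and threshold that ``cannot be satisfied by simultaneously offering the high-revenue product to both segment $i$ and segment $i'$''; and since \eqref{prob:deterministic_multi_segment} has no cardinality constraint, no segment ever needs to ``give up'' a product to fill a quota --- it can always offer everything. The only lever available to create a tradeoff is the MNL denominator: a forced product can hurt a segment only if its preference weight for that segment is large. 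Second, and consequently, your proposed calibration --- padding products with ``negligible preference weight and negligible revenue so that both numerator and denominator are essentially unperturbed'' --- would make satisfying all covering constraints free for every segment; the optimal solution would simply offer all products to all segments, the objective would not track the independent set size at all, and the reduction collapses. Your two desiderata (i) and (ii) are in direct tension, and the device you propose for (i) destroys (ii).

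The missing idea, which is exactly what the paper's construction supplies, is to make the forced products revenue-killing through \emph{segment-specific} weights rather than globally negligible ones. The paper gives each vertex $i$ a product $i$ with tiny revenue $r_i=m^{-2}/2$ but huge weight for its own segment only, $v_{ii}=4m^2$ and $v_{ij}=0$ for $j\neq i$, plus a single common high-revenue product $m+1$ with $r_{m+1}=2$ and $v_{m+1,j}=1$ for all $j$. The edge category is $C_k=\{i,i'\}$ with the fractional threshold $\ell_k=2-1/m$, which under uniform probabilities reads $\sum_{j}|S_j\cap\{i,i'\}|\geq 2m-1$: out of the $2m$ possible (segment, product) incidences at most one may be absent, and since absences in segments $j\neq i,i'$ are pointless (those segments assign zero weight to products $i$ and $i'$), at most one of $i\notin S_i$ and $i'\notin S_{i'}$ can occur. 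Hence $A=\{i: i\notin S_i\}$ is an independent set; a segment with $i\in S_i$ earns at most $1/m^2$ (the weight $4m^2$ swamps the numerator and denominator), while a segment with $i\notin S_i$ and $m+1\in S_i$ earns exactly $1$, so the objective equals $|A|/m$ up to an additive error of $1/m^2$ per segment, and an $\alpha$-approximation for \eqref{prob:deterministic_multi_segment} yields an $\Omega(\alpha)$-approximation for independent set. Note the polarity: what is excluded is a \emph{low}-revenue, high-weight product from its own segment's assortment, which is precisely why the monotonicity of covering constraints is not an obstacle; any correct completion of your sketch would have to be rebuilt around this mechanism.
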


In view of this hardness result, we propose an alternative algorithm that yields a $1/m(\log K + 2)$-approximation. 
Our approximation ratio matches the hardness result provided in Theorem~\ref{thm:hardness_multi_segment}~up to a logarithmic factor of $K$. The algorithm is provided in~\cref{alg:multi_segment_general}. Specifically, \cref{alg:multi_segment_general} simply offers the full universe of products to all except for one customer segment. The assortment offered to the remaining customer segment is determined by running \cref{alg:single_segment} on that customer segment such that the (remaining) covering constraints are satisfied. In particular, if we offer the full universe of products to all customer segments except $\ell \in {\cal M}$, for each category $k$, we update the threshold of products to be covered in that category to $\ell_k^{(\ell)}$ according to Equation \eqref{eq:segment_lower_bound}, and then we solve the single customer segment problem for segment $\ell$ using the updated thresholds. We run the algorithm for all $\ell \in {\cal M}$ and pick the best solution.
In the following, we prove that \cref{alg:multi_segment_general} returns a $1/m(\log K+2)$-approximation to~\eqref{prob:deterministic_multi_segment}.

\begin{algorithm}[!ht]
\caption{Multi-segment assortment optimization with general number of segments}
\SingleSpacedXI
\label{alg:multi_segment_general}
\begin{algorithmic}
\For{$\ell\in\mathcal{M}$}
\State For each $k\in\{1,2,\dots,K\}$, set
\begin{equation}
\ell_k^{(\ell)}=\left\lceil \dfrac{\ell_k-\sum_{j\in\mathcal{M}\backslash\{\ell\}}\theta_{j}|C_k|}{\theta_\ell}\right\rceil\label{eq:segment_lower_bound}
\end{equation}
\State Run~\cref{alg:single_segment}~on customer segment $\ell$ with minimum thresholds on categories being $\{\ell_k^{(\ell)}\}_{k=1}^K$. Let $\bar{S}^{(\ell)}$ be the output of~\cref{alg:single_segment}
\State Set $S_\ell^{(\ell)}=\bar{S}^{(\ell)}$, $S_j^{(\ell)}=\mathcal{N}$ for all $j\in\mathcal{M}\backslash\{\ell\}$
\EndFor
\State \textbf{return} $\{S_j\}_{j\in\mathcal{M}}$ that maximizes the expected revenue over $\{\{S_j^{(\ell)}\}_{j\in\mathcal{M}}:\ell\in\mathcal{M}\}$
\end{algorithmic}
\end{algorithm}

\begin{theorem} \label{thm:algo-multi-segement-general-m}
Algorithm~\ref{alg:multi_segment_general}~returns a $1/m(\log K+2)$-approximation to~\eqref{prob:deterministic_multi_segment}.
\end{theorem}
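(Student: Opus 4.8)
The plan is to show that Algorithm~\ref{alg:multi_segment_general} achieves a $1/m(\log K+2)$-approximation by comparing the best of its $m$ candidate solutions against the optimal value of~\eqref{prob:deterministic_multi_segment}. Let $\{S_j^*\}_{j\in\mathcal{M}}$ be an optimal solution with optimal value $\mathrm{OPT}=\sum_{j\in\mathcal{M}}\theta_j R_j(S_j^*)$. Since this is a sum of $m$ nonnegative terms, by averaging there exists an index $\ell\in\mathcal{M}$ such that the single term satisfies $\theta_\ell R_\ell(S_\ell^*)\geq \mathrm{OPT}/m$. The intuition is that if segment $\ell$ alone contributes at least a $1/m$ fraction of the optimum, then it suffices to approximately recover just the revenue of that one segment while trivially satisfying the covering constraints on all other segments by offering them the full universe $\mathcal{N}$.

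The first key step is to verify feasibility of the candidate solution $\{S_j^{(\ell)}\}_{j\in\mathcal{M}}$ produced in iteration $\ell$. I would argue that the updated thresholds $\ell_k^{(\ell)}$ in Equation~\eqref{eq:segment_lower_bound} are exactly what segment $\ell$ must cover so that the global covering constraint $\sum_{j\in\mathcal{M}}\theta_j|S_j^{(\ell)}\cap C_k|\geq \ell_k$ holds once every other segment offers all of $C_k$ (contributing $\sum_{j\neq\ell}\theta_j|C_k|$). Rearranging the covering constraint for segment $\ell$ gives $|S_\ell^{(\ell)}\cap C_k|\geq (\ell_k-\sum_{j\neq\ell}\theta_j|C_k|)/\theta_\ell$, and taking the ceiling yields an integer-valued threshold, so running Algorithm~\ref{alg:single_segment} with thresholds $\{\ell_k^{(\ell)}\}$ on segment $\ell$ returns a set $\bar S^{(\ell)}$ meeting these requirements. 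One must also check that $S_\ell^*$ is itself feasible for the single-segment problem with these updated thresholds, i.e.\ $|S_\ell^*\cap C_k|\geq \ell_k^{(\ell)}$; this follows because $\{S_j^*\}$ satisfies the global constraint and $|S_j^*\cap C_k|\leq|C_k|$ for $j\neq\ell$, combined with integrality of $|S_\ell^*\cap C_k|$ to justify the ceiling.

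The second key step is the revenue bound. In iteration $\ell$, segment $\ell$ is solved by Algorithm~\ref{alg:single_segment}, which by Theorem~\ref{thm:single_segment} returns $\bar S^{(\ell)}$ with $R_\ell(\bar S^{(\ell)})\geq \frac{1}{\log K+2}\,R_\ell(S_\ell^*)$, since $S_\ell^*$ is feasible for that single-segment instance. Therefore the candidate solution from iteration $\ell$ earns expected revenue at least $\theta_\ell R_\ell(\bar S^{(\ell)})\geq \frac{1}{\log K+2}\,\theta_\ell R_\ell(S_\ell^*)\geq \frac{1}{m(\log K+2)}\,\mathrm{OPT}$, where the last inequality uses the averaging bound on $\ell$. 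Because Algorithm~\ref{alg:multi_segment_general} returns the best candidate over all $\ell\in\mathcal{M}$, its output is at least this large, giving the claimed ratio.

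I expect the main obstacle to be the feasibility verification rather than the revenue bound: one must carefully confirm that the ceiling in~\eqref{eq:segment_lower_bound} does not over-constrain the single-segment subproblem to the point of infeasibility, and in particular that $S_\ell^*$ still satisfies the rounded-up thresholds $\ell_k^{(\ell)}$. The integrality argument $|S_\ell^*\cap C_k|\geq\lceil\cdot\rceil$ is the delicate point, since it relies on $|S_\ell^*\cap C_k|$ being an integer at least as large as the (possibly fractional) quantity inside the ceiling. A minor edge case worth noting is when $\ell_k^{(\ell)}\leq 0$, where the covering constraint for category $k$ is already met by the other segments and imposes nothing on segment $\ell$; this should be handled smoothly since Algorithm~\ref{alg:single_segment} simply skips categories whose thresholds are already satisfied.
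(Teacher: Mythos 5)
Your proposal is correct and follows essentially the same argument as the paper's proof: an averaging step to single out a segment contributing at least a $1/m$ fraction of the optimum, the feasibility/ceiling-integrality argument showing both that the candidate solutions satisfy the global covering constraints and that $S_\ell^*$ is feasible for the updated single-segment instance, an application of Theorem~\ref{thm:single_segment} to that instance, and finally taking the best candidate. The edge case $\ell_k^{(\ell)}\leq 0$ you flag is indeed handled exactly as you describe, so there is no gap.
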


\begin{proof}
We first verify that for every $\ell\in\mathcal{M}$, $\{S_j^{(\ell)}\}_{j\in\mathcal{M}}$ is a feasible solution to~\eqref{prob:deterministic_multi_segment}. By definition of $S_\ell^{(\ell)}$ we have $|S_{\ell}^{(\ell)}\cap C_k|\geq \ell_k^{(\ell)}$ for all $k\in\{1,2,\dots,K\}$. Therefore for any $k\in\{1,2,\dots,K\}$,
\begin{equation*}
\sum_{j\in\mathcal{M}}\theta_j|S_j^{(\ell)}\cap C_k|\geq \theta_\ell\ell_k^{(\ell)}+|C_k|\sum_{j\in\mathcal{M}\backslash\{\ell\}} \theta_j\geq \ell_k-|C_k|\sum_{j\in\mathcal{M}\backslash\{\ell\}}\theta_j+|C_k|\sum_{j\in\mathcal{M}\backslash\{\ell\}}\theta_j=\ell_k,
\end{equation*}
which implies that for every $\ell\in\mathcal{M}$, $\{S_j^{(\ell)}\}_{j\in\mathcal{M}}$ is a feasible solution to~\eqref{prob:deterministic_multi_segment}.

We use $\{S_j^*\}_{j\in\mathcal{M}}$ to denote the optimal solution to~\eqref{prob:deterministic_multi_segment}, and define $j^*$ as \mbox{$j^*=\mathrm{argmax}\{\theta_jR_j(S_j^*):j\in\mathcal{M}\}$}. Then we have
\begin{equation*}
\theta_{j^*}R_{j^*}(S_{j^*}^*)\geq \dfrac{1}{m}\sum_{j\in\mathcal{M}}\theta_jR_j(S_j^*).
\end{equation*}
We claim that $S_{j^*}^*$ is a feasible solution to the following weighted set cover problem
\begin{equation}
\begin{aligned}
&\max_{S\subseteq\mathcal{N}}&&R_{j^*}(S)\\
&\textup{s.t.}&&{|S\cap C_k|\geq \ell_k^{(j^*)},\ \forall k\in\{1,2,\dots,K\}.}
\end{aligned}\label{prob:best_segment}
\end{equation}
In fact, since $\{S_j^*\}_{j\in\mathcal{M}}$ is a feasible solution to~\eqref{prob:deterministic_multi_segment}, for every $k\in\{1,2,\dots,K\}$ we have \mbox{$\sum_{j\in\mathcal{M}}\theta_j|S_j^*\cap C_k|\geq \ell_k$}, thus for every $k\in\{1,2,\dots,K\}$,
\begin{equation*}
|S_{j^*}^*\cap C_k|\geq \dfrac{1}{\theta_{j^*}}\left(\ell_k-\sum_{j\in\mathcal{M}\backslash\{j^*\}}\theta_j|S_j^*\cap C_k|\right)\geq \dfrac{\ell_k-\sum_{j\in\mathcal{M}\backslash\{j^*\}}\theta_j|C_k|}{\theta_{j^*}},
\end{equation*}
taking the ceiling implies that $|S_{j^*}^*-C_k|\geq \ell_k^{(j^*)}$ for all $k\in\{1,2,\dots,K\}$, thus $S_{j^*}^*$ is feasible to~\eqref{prob:best_segment}.

Since $\bar{S}^{(j^*)}$ is obtained by approximately solving~\eqref{prob:best_segment}~using~\cref{alg:single_segment}, by~\cref{thm:single_segment}~we have that $\bar{S}^{(j^*)}$ is a $1/(\log K+2)$-approximation to~\eqref{prob:best_segment}. Thus we get
\begin{equation*}
\sum_{j\in\mathcal{M}}\theta_jR_j(S_j^{(j^*)})\geq \theta_{j^*}R_{j^*}(\bar{S}^{(j^*)})\geq 1/(\log K+2)\cdot \theta_{j^*}R_{j^*}(S_{j^*}^*)\geq1/m(\log K+2)\cdot\sum_{j\in\mathcal{M}}\theta_jR_j(S_j^*).
\end{equation*}
Since~\cref{alg:multi_segment_general}~outputs $\{S_j^{(\ell)}\}_{j\in\mathcal{M}}$ that maximizes the expected revenue over all $\ell\in\mathcal{M}$, the expected revenue of the output of~\cref{alg:multi_segment_general}~is greater or equal to the expected revenue of $\{S_j^{(j^*)}\}_{j\in\mathcal{M}}$. We conclude that~\cref{alg:multi_segment_general}~outputs a $1/m(\log K+2)$-approximation to~\eqref{prob:deterministic_multi_segment}.
\end{proof}

{  We remark that the approximation ratio established in Theorem~\ref{thm:algo-multi-segement-general-m}~scales inversely proportional to the number of segments $m$. When the number of segments is large $m$, Theorem~\ref{thm:algo-multi-segement-general-m}~is unable to provide a strong performance guarantee for the solution returned by Algorithm~\ref{alg:multi_segment_general}. Nevertheless, recall that we have proven in Theorem~\ref{thm:hardness_multi_segment}~that it is NP-hard to approximate \eqref{prob:deterministic_multi_segment} within a factor of $\Omega(1/m^{1-\epsilon})$ for general $m$, so the approximation ratio obtained in Theorem~\ref{thm:algo-multi-segement-general-m}~matches the corresponding hardness result up to a logarithmic factor in $K$. }

\section{Numerical Experiments}\label{sec:numerical}


In this section, we conduct numerical experiments using MNL models calibrated from real data. Specifically, we study the following questions: First, how much revenue does the seller lose by introducing covering constraints compared to the unconstrained optimal expected revenue? Second, how large can the gap in optimal expected revenue between~\eqref{prob:randomized_single_segment} and~\eqref{prob:deterministic_single_segment} be in practice? Third, how many assortments does the optimal solution to~\eqref{prob:randomized_single_segment} randomize over in practice?

\subsection{Data}
We use the dataset \cite{ECommerceData} for our numerical experiments which includes purchase records from a large electronics online store from April 2020 to November 2020. Each purchase record in the dataset includes the date and time of the purchase, the ID of the purchased product, the type of the purchased product (\eg, smartphones, tablets, printers, tables) as well as its price and brand.

We conduct numerical experiments on selected product types that appear in the dataset. The product types we conduct numerical experiments on, as well as their numbers of products and brands, are provided in \cref{table:product_type_list}. For each product type, we separately calibrate the MNL model. Since it is meaningless to introduce covering constraints to very small brands, we first discard all products from brands with fewer than $10$ products to ensure a certain level of flexibility within each brand.

We divide the time horizon of the dataset into time intervals of two weeks and assume that the offered assortment is fixed during each interval. The assortment offered during each time interval is assumed to be the set of products that were sold at least once during that period. Since the dataset lacks records of customers leaving without a purchase, which is crucial for calibrating the MNL model, we add artificial no-purchase records to supplement the purchase records. Specifically, the number of no-purchase records in each time interval is set to be a fixed proportion $\alpha$ of the total number of purchases during that interval. In our experiments, we set the ratio $\alpha$ to $\{0.05,\,0.1,\,0.2,\,0.3\}$. Finally, we calibrate the preference weight of each product within the corresponding product type using the maximum likelihood estimator, based on the offered assortment in each time interval and the number of purchases of each product as well as the no-purchase option under the corresponding offered assortments.

\begin{table}[t]
\centering{\scriptsize
\begin{tabular}{|c|c|c|}
\hline
Type&\# Products&\# Brands\\
\hline
Smartphone&618&9\\
Headphone&604&17\\
Notebook&780&14\\
Tablet&226&7\\
TV&445&10\\
Mouse&302&11\\
Keyboard&102&7\\
Monitor&124&4\\
Printer&156&3\\
Clock&193&5\\
Table&394&11\\
Kettle&175&11\\
Refrigerator&482&16\\
\hline
\end{tabular}
\caption{Number of products and brands for each product type}
\label{table:product_type_list}}
\end{table}

\subsection{Experimental Setup}\label{sec:experimental_setup}

We categorize products by their prices and brands. For price categories, we determine four price ranges based on the $25\%$, $50\%$, and $75\%$ percentiles of the prices of all products within the corresponding product type, and products within the same price range form a single category. For brand categories, products from the same brand are considered as a single category. The covering constraints over the constructed categories are set to enforce a uniform number of representatives $\ell$ to be included in the assortment, i.e., $\ell_k = \ell$ for all $k \in \{1, 2, \dots, K\}$. We vary the number of representatives $\ell$ from $1$ to $5$.

We conduct numerical experiments in both the deterministic and randomized settings. The optimal expected revenue of~\eqref{prob:randomized_single_segment} can be computed exactly using the linear program~\eqref{prob:lp_randomized} as shown in Theorem~\ref{thm:lp_randomized_single_segment}. Since products are categorized based only on two features, prices and brands, we know by Lemma~\ref{lemma:tu_constraints} in Appendix~\ref{sec:tu_constraints} that the covering constraints of~\eqref{prob:deterministic_single_segment} are totally unimodular in our experimental setting. Thus, the optimal expected revenue can be computed exactly via an equivalent linear program as shown in \cite{sumida2021revenue}. Additionally, we compute the optimal revenue in the unconstrained setting (i.e., $R^*= \max_{S \subseteq {\cal N}} R(S)$), which can be solved in polynomial time under the MNL model, as the optimal assortment is revenue-ordered in the unconstrained assortment problem under the MNL model.

For each value of $\alpha \in \{0.05,0.1, 0.2. 0.3\}$, each value of  $ \ell \in \{1,2,3,4,5\}$ and each product type, we compute  the revenue loss  between  \eqref{prob:randomized_single_segment} and the unconstrained optimum  which is defined by the ratio  ${\sf Loss_r}= \frac{R^* - z^*_{\sf r}}{R^*} $ (in \%), where $R^*$ is the optimal revenue in the unconstrained setting and $z^*_{\sf r}$ is the optimal revenue of \eqref{prob:randomized_single_segment}. We report the obtained results in \cref{table:revenue_loss_randomized}. Additionally, we plot the revenue loss as a function of the number of representatives $\ell$  in \cref{fig:revenue_loss_randomized}. We also report the number of assortments that we randomize over in each instance of  \eqref{prob:randomized_single_segment} in  \cref{table:num_assortment}. Similarly, for each value of $\alpha \in \{0.05,0.1, 0.2. 0.3\}$, each value of  $ \ell \in \{1,2,3,4,5\}$ and each product type, we compute  the revenue loss  between  \eqref{prob:deterministic_single_segment} and the unconstrained optimum  which is defined by the ratio  ${\sf Loss_d}= \frac{R^* - z^*_{\sf d}}{R^*} $ (in \%), where $R^*$ is the optimal revenue in the unconstrained setting and $z^*_{\sf d}$ is the optimal revenue of \eqref{prob:deterministic_single_segment}. We report the obtained results in \cref{table:revenue_loss_deterministic}.




\subsection{Results}

\begin{table}[t]
\centering
\subfigure[$\alpha=0.05$]{\scriptsize
\begin{tabular}{|c|c|c|c|c|c|}
\hline
 \diagbox[dir=NW]{Type}{$\ell$}  & 1      & 2      & 3      & 4      & 5 \\
\hline
Smartphone & 0.09\% & 0.19\% & 0.47\% & 0.79\% & 1.19\% \\
Headphone & 0.37\% & 0.80\%  & 1.37\% & 2.28\% & 3.28\% \\
Notebook  & 0.77\% & 1.70\% & 2.65\% & 3.71\% & 4.83\% \\
Tablet   & 0.29\% & 0.78\% & 1.59\% & 2.59\% & 4.64\% \\
TV   & 0.31\% & 1.05\% & 2.93\% & 6.13\% & 9.86\% \\
Mouse   & 0.37\% & 0.91\% & 1.57\% & 2.43\% & 3.68\% \\
Keyboard   & 0.76\% & 1.60\% & 2.84\% & 4.69\% & 9.79\% \\
Monitor   & 0.86\% & 1.87\% & 3.34\% & 5.37\% & 7.50\%  \\
Printer   & 0.17\% & 0.38\% & 0.67\% & 1.01\% & 1.34\% \\
Clock   & 0.99\% & 2.26\% & 3.62\% & 4.99\% & 6.41\% \\
Table   & 0.08\% & 0.28\% & 0.56\% & 1.02\% & 1.52\% \\
Kettle   & 0.22\% & 0.51\% & 0.87\% & 1.56\% & 2.80\%  \\
Refrigerator   & 0.03\% & 0.16\% & 0.40\%  & 0.68\% & 1.22\% \\
\hline
\end{tabular}
}
\ 
\subfigure[$\alpha=0.1$]{\scriptsize
\begin{tabular}{|c|c|c|c|c|c|c|}
\hline
 \diagbox[dir=NW]{Type}{$\ell$}    & 1      & 2      & 3      & 4      & 5 \\
\hline
Smartphone   & 0.06\% & 0.12\% & 0.30\% & 0.52\% & 0.80\% \\
Headphone   & 0.16\% & 0.37\% & 0.62\% & 1.06\% & 1.54\% \\
Notebook   & 0.37\% & 0.85\% & 1.36\% & 1.94\% & 2.56\% \\
Tablet   & 0.14\% & 0.36\% & 0.90\% & 1.57\% & 2.93\% \\
TV   & 0.11\% & 0.38\% & 1.14\% & 2.56\% & 4.36\% \\
Mouse   & 0.10\%  & 0.30\%  & 0.53\% & 0.79\% & 1.55\% \\
Keyboard   & 0.33\% & 0.72\% & 1.29\% & 2.20\%  & 4.87\% \\
Monitor   & 0.26\% & 0.56\% & 1.07\% & 1.86\% & 2.66\% \\
Printer   & 0.09\% & 0.19\% & 0.35\% & 0.53\% & 0.73\% \\
Clock   & 0.36\% & 0.89\% & 1.47\% & 2.09\% & 2.85\% \\
Table   & 0.04\% & 0.12\% & 0.22\% & 0.45\% & 0.71\% \\
Kettle   & 0.09\% & 0.21\% & 0.38\% & 0.70\%  & 2.32\% \\
Refrigerator   & 0.01\% & 0.06\% & 0.15\% & 0.27\% & 0.52\% \\
\hline
\end{tabular}
}

\subfigure[$\alpha=0.2$]{\scriptsize
\begin{tabular}{|c|c|c|c|c|c|c|}
\hline
\diagbox[dir=NW]{Type}{$\ell$}     & 1      & 2      & 3      & 4      & 5 \\
\hline
Smartphone   & 0.03\% & 0.06\% & 0.16\% & 0.27\% & 0.42\% \\
Headphone   & 0.06\% & 0.13\% & 0.22\% & 0.34\% & 0.47\% \\
Notebook   & 0.14\% & 0.33\% & 0.53\% & 0.77\% & 1.03\% \\
Tablet   & 0.03\% & 0.11\% & 0.36\% & 0.66\% & 1.25\% \\
TV   & 0.04\% & 0.15\% & 0.47\% & 1.12\% & 1.90\%  \\
Mouse   & 0.01\% & 0.03\% & 0.08\% & 0.14\% & 0.28\% \\
Keyboard   & 0.15\% & 0.36\% & 0.72\% & 1.19\% & 2.64\% \\
Monitor   & 0.07\% & 0.17\% & 0.31\% & 0.54\% & 0.72\% \\
Printer   & 0.03\% & 0.07\% & 0.12\% & 0.19\% & 0.27\% \\
Clock   & 0.26\% & 0.53\% & 0.85\% & 1.24\% & 1.73\% \\
Table   & 0.01\% & 0.04\% & 0.06\% & 0.13\% & 0.21\% \\
Kettle   & 0.05\% & 0.11\% & 0.19\% & 0.34\% & 0.59\% \\
Refrigerator  & 0.00\%      & 0.02\% & 0.04\% & 0.07\% & 0.15\% \\
\hline
\end{tabular}
}
\ 
\subfigure[$\alpha=0.3$]{\scriptsize
\begin{tabular}{|c|c|c|c|c|c|}
\hline
\diagbox[dir=NW]{Type}{$\ell$}      & 1      & 2      & 3      & 4      & 5 \\
\hline
Smartphone   & 0.01\% & 0.04\% & 0.10\%  & 0.18\% & 0.28\% \\
Headphone   & 0.02\% & 0.05\% & 0.08\% & 0.14\% & 0.21\% \\
Notebook   & 0.09\% & 0.23\% & 0.37\% & 0.54\% & 0.73\% \\
Tablet   & 0.01\% & 0.05\% & 0.22\% & 0.41\% & 0.76\% \\
TV   & 0.02\% & 0.09\% & 0.25\% & 0.66\% & 1.12\% \\
Mouse   & 0.01\% & 0.02\% & 0.04\% & 0.08\% & 0.15\% \\
Keyboard   & 0.09\% & 0.24\% & 0.47\% & 0.74\% & 1.54\% \\
Monitor   & 0.03\% & 0.07\% & 0.13\% & 0.25\% & 0.39\% \\
Printer   & 0.02\% & 0.03\% & 0.6\% & 0.9\% & 0.13\% \\
Clock   & 0.19\% & 0.38\% & 0.61\% & 0.85\% & 1.12\% \\
Table   & 0.01\% & 0.02\% & 0.03\% & 0.07\% & 0.11\% \\
Kettle   & 0.04\% & 0.08\% & 0.14\% & 0.26\% & 0.44\% \\
Refrigerator &  0.00\% & 0.00\% & 0.00\% &             0.01\% & 0.04\% \\
\hline
\end{tabular}
}
\caption{Revenue loss compared to unconstrained optimum in the randomized setting}
\label{table:revenue_loss_randomized}
\end{table}

\begin{table}[t]
\centering
\subfigure[$\alpha=0.05$]{\scriptsize
\begin{tabular}{|c|c|c|c|c|c|}
\hline
 \diagbox[dir=NW]{Type}{$\ell$}    & 1      & 2      & 3      & 4      & 5 \\
\hline
Smartphone  & 1 & 1 & 1 & 1 & 1 \\
Headphone  & 3 & 2 & 3 & 1 & 1 \\
Notebook  & 2 & 5 & 3 & 4 & 3 \\
Tablet &  1 & 1 & 1 & 1 & 1 \\
TV  & 1 & 1 & 1 & 1 & 3 \\
Mouse & 1 & 1 & 1 & 1 & 1 \\
Keyboard & 1 & 1 & 1 & 1 & 1 \\
Monitor & 1 & 1 & 1 & 1 & 6 \\
Printer & 1 & 1 & 1 & 2 & 1 \\
Clock & 2 & 3 & 4 & 3 & 3 \\
Table & 1 & 1 & 1 & 1 & 1 \\
Kettle & 1 & 1 & 1 & 1 & 2 \\
Refrigerator & 1 & 1 & 1 & 1 & 1 \\
\hline
\end{tabular}
}
\ 
\subfigure[$\alpha=0.1$]{\scriptsize
\begin{tabular}{|c|c|c|c|c|c|}
\hline
 \diagbox[dir=NW]{Type}{$\ell$}    & 1      & 2      & 3      & 4      & 5     \\
\hline
Smartphone & 1 & 1 & 1 & 1 & 1 \\
Headphone & 1 & 3 & 2 & 2 & 2 \\
Notebook & 2 & 2 & 1 & 2 & 1 \\
Tablet & 1 & 1 & 1 & 1 & 1 \\
TV & 1 & 1 & 1 & 1 & 4 \\
Mouse & 1 & 1 & 1 & 1 & 1 \\
Keyboard & 1 & 1 & 1 & 1 & 1 \\
Monitor & 1 & 1 & 1 & 1 & 1 \\
Printer & 1 & 1 & 1 & 1 & 1 \\
Clock & 2 & 1 & 1 & 1 & 1 \\
Table & 1 & 1 & 1 & 1 & 1 \\
Kettle & 1 & 1 & 1 & 1 & 1 \\
Refrigerator & 1 & 1 & 1 & 1 & 1 \\
\hline
\end{tabular}
}

\subfigure[$\alpha=0.2$]{\scriptsize
\begin{tabular}{|c|c|c|c|c|c|}
\hline
 \diagbox[dir=NW]{Type}{$\ell$}   & 1      & 2      & 3      & 4      & 5     \\
\hline
Smartphone & 1  & 1  & 1  & 1  & 1  \\
Headphone & 1  & 1  & 1  & 2  & 3  \\
Notebook & 2 & 2 & 1 & 2 & 1  \\
Tablet & 1 & 1 & 1 & 1 & 1 \\
TV & 1 & 1 & 1 & 1 & 2 \\
Mouse  & 1 & 1 & 1 & 1 & 2 \\
Keyboard & 1 & 1 & 1 & 1 & 1 \\
Monitor & 1 & 1 & 1 & 1 & 1 \\
Printer & 1 & 1 & 1 & 2 & 1 \\
Clock & 2 & 1 & 1 & 1 & 1 \\
Table & 1 & 1 & 1 & 1 & 1 \\
Kettle & 1 & 1 & 1 & 1 & 1 \\
Refrigerator & 2 & 1 & 1 & 1 & 1 \\
\hline
\end{tabular}
}
\ 
\subfigure[$\alpha=0.3$]{\scriptsize
\begin{tabular}{|c|c|c|c|c|c|}
\hline
 \diagbox[dir=NW]{Type}{$\ell$}   & 1      & 2      & 3      & 4      & 5     \\
\hline
Smartphone & 1 & 1 & 1 & 1 & 1 \\
Headphone & 2 & 1 & 1 & 3 & 2 \\
Notebook & 2 & 2 & 1 & 2 & 1 \\
Tablet & 1 & 1 & 1 & 1 & 1 \\
TV & 1 & 1 & 1 & 1 & 2 \\
Mouse & 1 & 1 & 1 & 1 & 1 \\
Keyboard & 1 & 1 & 1 & 1 & 1 \\
Monitor & 1 & 1 & 1 & 1 & 1 \\
Printer & 1 & 1 & 1 & 1 & 1 \\
Clock & 2 & 1 & 1 & 1 & 1 \\
Table & 1 & 1 & 1 & 1 & 1 \\
Kettle & 1 & 1 & 1 & 1 & 1 \\
Refrigerator & 1 & 1 & 1 & 1 & 1 \\
\hline
\end{tabular}}
\caption{Number of assortments in the randomized setting}
\label{table:num_assortment}
\end{table}

\begin{table}[t]
\centering
\subfigure[$\alpha=0.05$]{\scriptsize
\begin{tabular}{|c|c|c|c|c|c|}
\hline
 \diagbox[dir=NW]{Type}{$\ell$}    & 1      & 2      & 3      & 4      & 5 \\
\hline
Smartphone  & 0.09\% & 0.19\% & 0.47\% & 0.79\% & 1.19\% \\
Headphone  & 0.37\% & 0.80\%  & 1.37\% & 2.28\% & 3.28\% \\
Notebook & 0.77\% & 1.71\% & 2.67\% & 3.72\% & 4.84\% \\
Tablet & 0.29\% & 0.78\% & 1.59\% & 2.59\% & 4.64\% \\
TV & 0.31\% & 1.05\% & 2.93\% & 6.13\% & 10.16\% \\
Mouse & 0.37\% & 0.91\% & 1.57\% & 2.43\% & 3.68\% \\
Keyboard & 0.76\% & 1.60\%  & 2.84\% & 4.69\% & 9.79\% \\
Monitor & 0.86\% & 1.87\% & 3.34\% & 5.37\% & 8.51\% \\
Printer & 0.17\% & 0.38\% & 0.67\% & 1.01\% & 1.34\% \\
Clock & 1.50\%  & 2.53\% & 3.65\% & 4.99\% & 6.46\% \\
Table & 0.08\% & 0.28\% & 0.56\% & 1.02\% & 1.52\% \\
Kettle & 0.22\% & 0.51\% & 0.87\% & 1.56\% & 2.80\%  \\
Refrigerator & 0.03\% & 0.16\% & 0.40\%  & 0.68\% & 1.22\% \\
\hline
\end{tabular}
}
\ 
\subfigure[$\alpha=0.1$]{\scriptsize
\begin{tabular}{|c|c|c|c|c|c|}
\hline
 \diagbox[dir=NW]{Type}{$\ell$}    & 1      & 2      & 3      & 4      & 5      \\
\hline
Smartphone & 0.06\% & 0.12\% & 0.30\% & 0.52\% & 0.80\% \\
Headphone & 0.16\% & 0.37\% & 0.62\% & 1.06\% & 1.54\% \\
Notebook & 0.37\% & 0.85\% & 1.36\% & 1.94\% & 2.56\% \\
Tablet & 0.14\% & 0.36\% & 0.90\% & 1.57\% & 2.97\% \\
TV & 0.11\% & 0.38\% & 1.14\% & 2.56\% & 4.28\% \\
Mouse & 0.10\%  & 0.30\%  & 0.53\% & 0.79\% & 1.55\% \\
Keyboard & 0.33\% & 0.72\% & 1.29\% & 2.20\%  & 4.87\% \\
Monitor & 0.26\% & 0.56\% & 1.07\% & 1.86\% & 2.66\% \\
Printer & 0.09\% & 0.19\% & 0.35\% & 0.53\% & 0.73\% \\
Clock & 0.34\% & 0.89\% & 1.47\% & 2.09\% & 2.85\% \\
Table & 0.04\% & 0.12\% & 0.22\% & 0.45\% & 0.71\% \\
Kettle & 0.09\% & 0.21\% & 0.38\% & 0.70\%  & 1.32\% \\
Refrigerator & 0.01\% & 0.06\% & 0.15\% & 0.27\% & 0.52\% \\
\hline
\end{tabular}
}

\subfigure[$\alpha=0.2$]{\scriptsize
\begin{tabular}{|c|c|c|c|c|c|}
\hline
 \diagbox[dir=NW]{Type}{$\ell$}   & 1      & 2      & 3      & 4      & 5      \\
\hline
Smartphone & 0.03\% & 0.06\% & 0.16\% & 0.27\% & 0.42\% \\
Headphone & 0.06\% & 0.13\% & 0.22\% & 0.34\% & 0.47\% \\
Notebook & 0.14\% & 0.33\% & 0.53\% & 0.77\% & 1.03\% \\
Tablet & 0.03\% & 0.11\% & 0.36\% & 0.66\% & 1.25\% \\
TV & 0.04\% & 0.14\% & 0.47\% & 1.12\% & 1.90\%  \\
Mouse & 0.01\% & 0.03\% & 0.08\% & 0.14\% & 0.28\% \\
Keyboard & 0.15\% & 0.36\% & 0.72\% & 1.19\% & 2.64\% \\
Monitor & 0.07\% & 0.17\% & 0.31\% & 0.54\% & 0.72\% \\
Printer & 0.03\% & 0.07\% & 0.12\% & 0.19\% & 0.27\% \\
Clock & 0.26\% & 0.53\% & 0.85\% & 1.24\% & 1.73\% \\
Table & 0.01\% & 0.04\% & 0.06\% & 0.13\% & 0.21\% \\
Kettle & 0.05\% & 0.11\% & 0.19\% & 0.34\% & 0.59\% \\
Refrigerator & 0.00\% & 0.02\% & 0.04\% & 0.07\% & 0.15\% \\
\hline
\end{tabular}
}
\ 
\subfigure[$\alpha=0.3$]{\scriptsize
\begin{tabular}{|c|c|c|c|c|c|}
\hline
\diagbox[dir=NW]{Type}{$\ell$}  & 1      & 2      & 3      & 4      & 5 \\
\hline
Smartphone & 0.01\% & 0.04\% & 0.10\%  & 0.18\% & 0.28\% \\
Headphone & 0.02\% & 0.05\% & 0.08\% & 0.14\% & 0.21\% \\
Notebook & 0.09\% & 0.23\% & 0.37\% & 0.54\% & 0.73\% \\
Tablet & 0.01\% & 0.05\% & 0.22\% & 0.41\% & 0.76\% \\
TV & 0.02\% & 0.09\% & 0.25\% & 0.66\% & 1.12\% \\
Mouse & 0.01\% & 0.02\% & 0.04\% & 0.08\% & 0.15\% \\
Keyboard & 0.09\% & 0.24\% & 0.47\% & 0.74\% & 1.54\% \\
Monitor & 0.03\% & 0.07\% & 0.13\% & 0.25\% & 0.39\% \\
Printer & 0.02\% & 0.03\% & 0.06\% & 0.09\% & 0.13\% \\
Clock & 0.19\% & 0.38\% & 0.61\% & 0.85\% & 1.12\% \\
Table & 0.01\% & 0.02\% & 0.03\% & 0.07\% & 0.11\% \\
Kettle & 0.04\% & 0.08\% & 0.14\% & 0.26\% & 0.44\% \\
Refrigerator & 0.00\%   & 0.00\%  & 0.00\%  & 0.01\% & 0.04\% \\
\hline
\end{tabular}
}
\caption{Revenue loss compared to unconstrained optimum in the deterministic setting}
\label{table:revenue_loss_deterministic}
\end{table}

\begin{figure}[t]
\centering
\subfigure[$\alpha=0.05$]{\includegraphics[height=6cm]{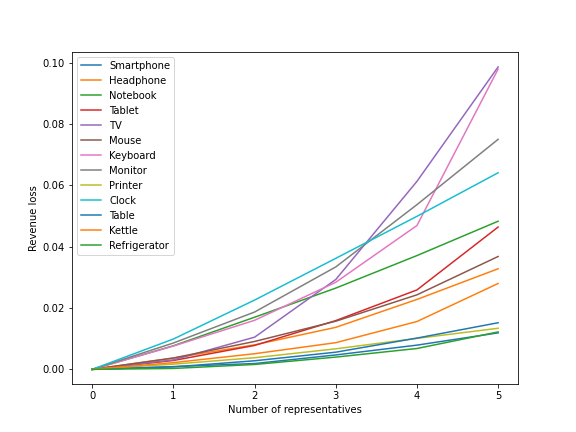}}
\subfigure[$\alpha=0.1$]{\includegraphics[height=6cm]{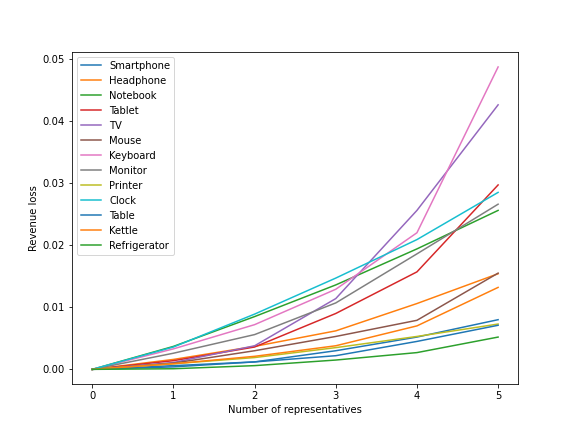}}
\subfigure[$\alpha=0.2$]{\includegraphics[height=6cm]{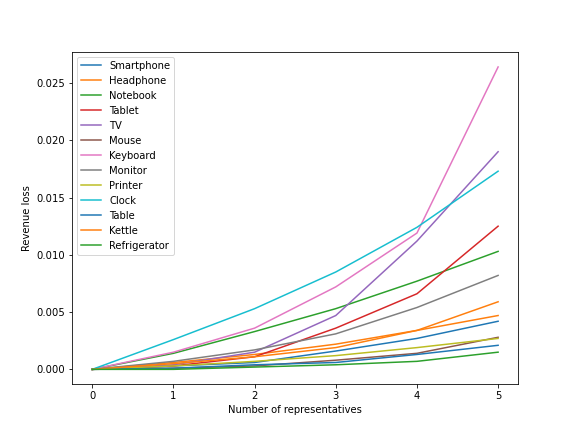}}
\subfigure[$\alpha=0.3$]{\includegraphics[height=6cm]{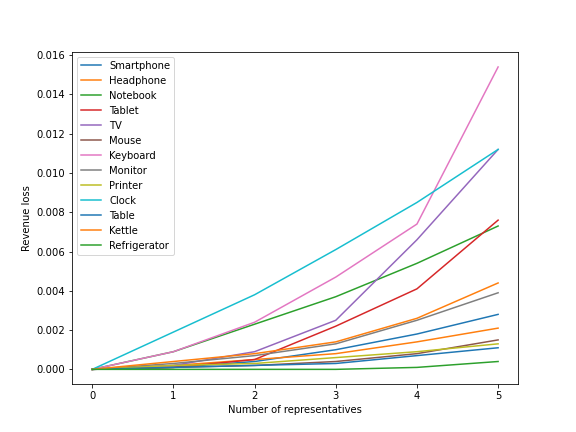}}
\caption{Revenue loss compared to unconstrained optimum in the randomized setting}
\label{fig:revenue_loss_randomized}
\end{figure}

We summarize the results in the following three aspects.

 \noindent\textbf{Price of Visibility:} 
We observe from Tables~\ref{table:revenue_loss_randomized} and \ref{table:revenue_loss_deterministic} that the revenue loss due to the introduction of covering constraints is minimal in most cases. It increases gradually as the threshold $\ell$ rises from 1 to 5. Specifically, for most product types, the revenue loss in both deterministic and randomized settings remains below $5\%$, and even below 1-2\% for smaller values of $\ell$, such as 1, 2, and 3. The few exceptions where the revenue loss exceeds $5\%$ involve product types like TVs, keyboards, monitors, and clocks, particularly when $\alpha = 0.05$ and the number of representatives is 4 or 5. The highest observed revenue loss is $10.16\%$, which occurs for TVs in the deterministic setting when $\alpha = 0.05$ and the number of representatives is 5.

The reasons why these four product types exhibit larger revenue loss compared to other product types can be explained as follows. As shown in \cref{table:product_type_list}, there are only $102$ available keyboards, making keyboards the product type with the smallest number of products among all tested product types. However, the number of brands for keyboard is $7$, a relatively large number of brands relative to the total number of products. Therefore adding representatives uniformly for all brands results in a larger portion of products being added to the assortment, and further causing a larger revenue loss. For TVs, monitors and clocks, these product types all contain a small number of extremely expensive products while the majority of products are cheap. Specifically, the most expensive TV costs more than $50,000$ dollars while most of the TVs cost less than $1000$ dollars; the most expensive monitor and clock cost more than $6000$ dollars, while a majority of monitors and clocks cost less than $500$ dollars. As a result, for these product types a large portion of revenue is gained from a few extremely expensive products, and introducing covering constraints results in adding a large number of cheap products, significantly reducing the purchase probability of the expensive products and thus causing relatively large revenue loss.

One can also see from \cref{fig:revenue_loss_randomized} that the revenue loss exhibits convexity in the number of representatives for each category. The revenue loss is minimal when introducing covering constraints with small number of representatives. Only when the number of representatives becomes larger does the revenue loss increase faster and faster.

\noindent\textbf{Number of Assortments in the Randomized Setting:} We observe from \cref{table:num_assortment} that the optimal solution of~\eqref{prob:randomized_single_segment}~is randomizes over only a small number of assortments for all cases. The maximum number of assortments is $6$, occurring for monitors when the number of representatives is $5$. The results in \cref{table:num_assortment} are consistent with Proposition~\ref{prop:num_assortment}, as the total number of categories is at least $7$ for all product types. Furthermore, one can also see from \cref{table:num_assortment} that the optimal solution in the randomized setting deterministically offers a single assortment for a majority of cases. Among the cases where the optimal solution does randomize over multiple assortments, the number of assortments randomized over is mostly $2$ or $3$, while the optimal solution rarely randomizes over more than $4$ assortments. From this observation one can conclude that in practice the optimal solution in the randomized setting frequently offers a single assortment deterministically, while for other cases the optimal solution randomizes over only a few assortments.

\noindent\textbf{Comparison between the Deterministic and Randomized Setting:} We have shown in Proposition~\ref{prop:integrality_gap} that the ratio in optimal expected revenue between the deterministic and randomized setting can be arbitrarily large. However, by comparing \cref{table:revenue_loss_randomized} and \cref{table:revenue_loss_deterministic} we observe that the optimal expected revenue in the randomized and deterministic settings are very close to each other. The difference in revenue loss between~\eqref{prob:randomized_single_segment}~and~\eqref{prob:deterministic_single_segment}~is at most $0.3\%$ for all cases, and for many cases the optimal expected revenues of the two settings are exactly the same. The reason for this is that the optimal solution in the randomized setting offers a single assortment deterministically in many cases, thus the optimal revenue in the randomized setting can also be achieved using a single assortment. For most of the other cases, the optimal solution for~\eqref{prob:randomized_single_segment} randomizes over a few assortments, and thus the gain from  randomization is limited. Furthermore, in the example constructed in Proposition~\ref{prop:integrality_gap}, the difference in both revenue and preference weights among products are extremely large, while in practice both revenue and preference weights of products are usually within a reasonable range, thus we expect large gaps between the deterministic and randomized settings to be rarely seen in practice.

{  

\noindent\textbf{Impact of Covering Constraints on Different Price Ranges:} To understand the impact of covering constraints on the expected revenue from each category level, we study how the expected revenue generated from each price category changes as we enforces stronger covering constraints. In this experiment, we focus on keyboards and the setting with $\alpha=0.05$ (similar phenomena can also be observed in other product types and other values of $\alpha$). Recall that in our experiments, besides categorizing products by brands, we construct price categories by dividing products into four price ranges. We refer to these four price categories as ``high", ``medium high", ``medium low", and ``low" from the highest revenues to the lowest revenues. We vary the minimum number of representatives for each category $\ell$ from $1$ to $5$. Normalizing the unconstrained optimal expected revenue as $1$, we present the expected revenue generated from each price category under different values of $\ell$ in Table~\ref{table:revenue_category}.

\begin{table}[!ht]
\centering{\scriptsize
\begin{tabular}{|c|c|c|c|c|c|}
\hline
\diagbox[dir=NW]{Category}{$\ell$}&1&2&3&4&5\\
\hline
High&0.7817&0.7063&0.6716&0.6491&0.5790\\
Medium High&0.2070&0.2714&0.2899&0.2802&0.2645\\
Medium Low&0.0031&0.0054&0.0083&0.0212&0.0548\\
Low&0.0006&0.0009&0.0017&0.0025&0.0037\\
\hline
Total&0.9924&0.9840&0.9716&0.9531&0.9021\\
\hline
\end{tabular}
\caption{Normalized expected revenue from each price category for keyboards when $\alpha=0.05$}
\label{table:revenue_category}}
\end{table}

One can see from Table~\ref{table:revenue_category} that, as the minimum number of representatives increases from $1$ to $5$, the expected revenue from high revenue products steadily decreases. The reason for such decrease is that as $\ell$ increases, more products with lower revenue are included in the assortment, which reduces the purchase probabilities of high revenue products. One can also see a significant increase in expected revenue for medium low and low revenue products as $\ell$ gets large. The reason for such increase is that as $\ell$ increases, more products with medium low and low revenue are included in the assortment due to covering constraints, resulting in an increase in the total expected revenue from the two categories. As for medium high revenue products, we see an increase in the expected revenue from the corresponding category when $\ell$ increases from $1$ to $3$, but the expected revenue from this category decreases as $\ell$ increases from $3$ to $5$. These observations suggest that as the seller enforce stronger covering constraints, the expected revenue generated by high revenue products will decrease, while the expected revenue generated by low revenue products will increase.

}

\section{Conclusion}

We have considered assortment optimization under the MNL model with covering constraints, in both deterministic and randomized settings. For deterministic assortment optimization with covering constraints, we proved that the problem is NP-hard to approximate within a factor of $(1+\epsilon)/\log K$ for any $\epsilon>0$, and presented a $1/(\log K + 2)$-approximation algorithm that matches the hardness of approximation. For randomized assortment optimization with covering constraints, we showed that the problem can be reformulated as an equivalent linear program, making it solvable in polynomial time. Additionally, we extended our analysis to a deterministic multi-segment model, establishing a $(1-\epsilon)/(\log K + 2)$-approximation algorithm for a constant number of customer segments $m$ for any $\epsilon>0$. For general $m$, we proved that the problem is NP-hard to approximate within a factor of $\Omega(1/m^{1-\epsilon})$ for any $\epsilon > 0$, and provided a $1/m(\log K + 2)$-approximation algorithm. Our numerical experiments using an e-commerce purchase dataset showed that revenue loss due to covering constraints is minimal, the optimal expected revenues for deterministic and randomized settings are very close, and that the randomized setting’s optimal solution typically involves only a small number of assortments. {  In Appendix H, we consider a joint assortment optimization and product framing problem with covering constraints, where the seller not only decides which products to offer, but also decide which position each offered product should be placed at. We establish an approximation algorithm such that, when the browsing probabilities are monotonically decreasing and there are at least $3n$ available positions, the algorithm returns a $1/2\log_2(8n)$-approximation to the problem.}

For future studies, we suggest further exploration in the following directions. First, this work focuses on assortment optimization with covering constraints under the MNL model. Future research could explore the application of covering constraints to other choice models, such as the generalized attraction model, nested logit model, or Markov chain choice model.
Second, we addressed assortment optimization only with covering constraints. In practice, sellers often face additional constraints. A significant area for future research is considering assortment optimization under both covering and packing constraints. Finally, our work focused on static problems where assortments are pre-determined for each customer segment under covering constraints. A valuable extension would be studying the online problem, where customers arrive sequentially, the decision maker observes their type, and offers an assortment. In this setting, the covering constraint might not need to be satisfied at each time step but should hold cumulatively at the end.

{

\bibliographystyle{ormsv080}
\bibliography{ref}
}

\newpage
\begin{appendices}{\Large \noindent\textbf{Appendix}}

\section{Special Cases of Totally Unimodular Constraints}\label{sec:tu_constraints}

In Lemma~\ref{lemma:tu_constraints}, we prove that if the categories can be divided into two groups such  that any two categories of the same group are disjoint, then the covering constraints are totally unimodular.

\begin{lemma}\label{lemma:tu_constraints}
Consider categories $C_1,C_2,\dots,C_K\subseteq\mathcal{N}$. Suppose there exists $k_0\in\{1,2,\dots,K\}$ such that $C_i\cap C_j=\varnothing$ if either $i<j\leq k_0$ or $k_0<i<j$. Then the constraints in~\eqref{prob:deterministic_single_segment}~are totally unimodular.
\end{lemma}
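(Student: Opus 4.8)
The plan is to identify the constraint matrix explicitly and then verify total unimodularity via a standard characterization. Encode an assortment $S$ by its indicator vector $z\in\{0,1\}^{n}$ with $z_i=\mathbf{1}[i\in S]$. Each covering constraint $|S\cap C_k|\ge \ell_k$ becomes $\sum_{i\in C_k} z_i\ge \ell_k$, so the constraint matrix is the category--membership matrix $A\in\{0,1\}^{K\times n}$ defined by $a_{ki}=\mathbf{1}[i\in C_k]$, with rows indexed by categories and columns by products. Proving that ``the constraints are totally unimodular'' amounts to showing $A$ is totally unimodular (appending the identity rows from the box constraints $0\le z_i\le 1$ preserves total unimodularity, so it suffices to treat $A$ itself).

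The key structural observation comes directly from the hypothesis. The rows split into two blocks, the ``lower'' block $G_1=\{1,\dots,k_0\}$ and the ``upper'' block $G_2=\{k_0+1,\dots,K\}$, and within each block the categories are pairwise disjoint. Consequently, for any fixed product $i$, at most one category in $G_1$ contains $i$ and at most one category in $G_2$ contains $i$. In matrix terms, every column of $A$ has at most one entry equal to $1$ among the rows of $G_1$ and at most one among the rows of $G_2$.

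I would then invoke Ghouila-Houri's characterization: $A$ is totally unimodular if and only if every set $R$ of rows can be partitioned as $R=R_1\cup R_2$ so that $\bigl|\sum_{i\in R_1}a_{ij}-\sum_{i\in R_2}a_{ij}\bigr|\le 1$ for every column $j$. Given an arbitrary row set $R\subseteq\{1,\dots,K\}$, I take the ``obvious'' partition dictated by the two groups,
\begin{equation*}
R_1=R\cap\{1,\dots,k_0\},\qquad R_2=R\cap\{k_0+1,\dots,K\}.
\end{equation*}
By the observation above, for every product (column) $j$ we have $\sum_{i\in R_1}a_{ij}\in\{0,1\}$ and $\sum_{i\in R_2}a_{ij}\in\{0,1\}$, whence their difference lies in $\{-1,0,1\}$. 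This verifies the Ghouila-Houri condition for every $R$, so $A$ is totally unimodular.

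There is essentially no hard obstacle once the bipartite/two-block structure is recognized; the only point requiring a little care is bookkeeping for the ``degenerate'' columns, namely products that belong to a category in only one group or to no category at all. These columns have at most one nonzero entry, so the displayed difference is trivially in $\{-1,0,1\}$ and they never violate the condition, confirming they do not affect total unimodularity. (Equivalently, one could phrase the argument by viewing each product as an edge or half-edge of a bipartite graph whose two vertex classes are $G_1$ and $G_2$, and then appeal to the classical fact that the incidence matrix of a bipartite graph is totally unimodular; I prefer the Ghouila-Houri route since it is self-contained and makes the role of the disjointness hypothesis completely transparent.)
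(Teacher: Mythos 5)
Your proof is correct and follows essentially the same route as the paper: both identify the category--membership matrix, observe that disjointness within each of the two groups forces every product's column to have at most one nonzero entry per group, and verify total unimodularity via the Ghouila-Houri partition condition (the paper phrases it as an ``equitable bicoloring'' of every row submatrix, citing Tamir, with the same two-block partition you use). The only differences are cosmetic — you work with the transpose of the paper's matrix and additionally note that the box constraints and degenerate columns cause no trouble.
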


\begin{proof}
We define the constraint matrix $A\in\mathbb{R}^{n\times K}$ as
\begin{equation*}
A_{ik}=\begin{cases}
1,\ &\text{if}\ i\in C_k,\\
0,\ &\text{if}\ i\notin C_k.
\end{cases}
\end{equation*}
To prove that the covering constraints are totally unimodular, it suffices to prove that $A$ is a totally unimodular matrix. Denote $B$ the submatrix formed by the first $k_0$ rows of $A$, and denote $C$ the submatrix formed by the last $K-k_0$ rows of $A$. In particular, the submatrix $B$ corresponds to covering constraints over categories $C_1,C_2,\dots,C_{k_0}$, and the submatrix $C$ corresponds to covering constraints over categories $C_{k_0+1},\dots,C_K$. Since $C_i\cap C_j=\varnothing$ for all $i<j\leq k_0$, then for each $i\in\{1,2,\dots,n\}$ there exists at most one index $k \in \{1, \ldots, k_0 \}$ such that $i\in C_k$. Therefore each column of $B$ contains at most one $1$ and the rest are zeros. Similarly each column of $C$ also contains at most one $1$ and the rest are zeros. 

To prove that $A$ is totally unimodular, by the result of Theorem 5 of \cite{tamir1975totally}, it suffices to prove that every row submatrix of $A$ has an equitable bicoloring. Here, an equitable bicoloring of a matrix is defined as a bipartition of rows such that the difference in sums of rows of the two parts is a vector whose entries are $0,\pm 1$. Consider any row submatrix $A'$ of $A$. We define  $B'$ as the matrix formed by rows of $A'$ in block $B$, and $C'$ as the matrix formed by rows of $B'$ in block $C$. Then each column of $B'$ and $C'$ has at most a $1$, thus the sums of rows of $B'$ and $C'$ are vectors with entries either $0$ or $1$, and the difference of sum of rows of $B'$ and $C'$ is a vector whose entries are $0,\pm 1$. Therefore each row submatrix $A$ has a equitable bicoloring, and we conclude that $A$ is totally unimodular.
\end{proof}

\section{Proof of \cref{thm:hardness_single_segment}} \label{sec:thm:hardness_single_segment}

We start by presenting the minimum set cover problem.

    \noindent\textbf{Minimum set cover problem.} Given elements $\{1,2,\dots,n\}$ and sets $\{C_1,C_2,\dots,C_K\}$ with $C_k\subseteq\{1,2,\dots,n\}$ for each $k\in\{1,2,\dots,K\}$, we say that element $i$ covers set $C_k$ if $i\in C_k$. The goal of the minimum set cover problem is to find a subset of elements with minimum size such that each set $C_1,C_2,\dots,C_K$ is covered with at least one of the elements in the subset. This problem is NP-hard to approximate within a factor of $(1-\epsilon)\log K$ for any $\epsilon>0$, unless $P=NP$ (see \citealt{dinur2014analytical}).

	Consider an instance of the minimum set cover problem with the universe of elements $\{1,2,\dots,n\}$ and sets $C_1,C_2,\dots,C_K$. We assume that $n\geq 2$. We set the universe of products as $\{1,\dots,n,n+1\}$. We construct an instance of~\eqref{prob:deterministic_single_segment}~as follows. Consider any $0<\epsilon<1/2$. We define $\epsilon_1$ as the positive number that satisfies $(1+\epsilon_1)^2/(1-\epsilon_1)=1+\epsilon$, then we get $0<\epsilon_1<1/2$. We set $\delta=\epsilon_1/n$ and $M=n/\epsilon_1$, and set the revenue and preference weights as 
    \begin{equation*}
    r_{n+1}=M/\delta,\ v_{n+1}=\delta,\ r_i=\delta/M,\ v_i=M,\ \forall i\in\{1,2,\dots,n\}.
    \end{equation*}
    We further set $\ell_k=1$ for all $k\in\{1,2,\dots,K\}$. We define $\mathcal{S}$ as the set of all feasible subsets of $\{1,2,\dots,n\}$ that satisfy the covering constraints.
		
    Since $r_i=\delta/M=\epsilon_1^2/n^2$ for all $i\in\{1,2,\dots,n\}$, the expected revenue of any assortment without product $n+1$ is strictly less than $\epsilon_1^2/n^2$. On the other hand, the expected revenue of any assortment $S$ containing product $n+1$ is at least
    \begin{equation*}
    R(S) \geq \dfrac{r_{n+1}v_{n+1}}{1+\sum_{i\in S} v_i} = \frac{M}{1+\delta + M (|S|-1)}  \geq\dfrac{M}{1+\delta+Mn}\geq\dfrac{1}{2n}\geq\dfrac{\epsilon_1^2}{n^2}.
    \end{equation*}
    Here, the second inequality is due to $|S|\leq n+1$, the third inequality is due to $Mn\geq M\geq 1/\epsilon_1\geq 2\geq 1+\delta$. Therefore we conclude that the optimal assortment must contain product $n+1$, and that $R(S\cup\{n+1\})\geq R(S)$ for any assortment $S\subseteq\{1,2,\dots,n+1\}$.
				
	For every $S\subseteq\{1,2,\dots,n\}$, we have
	\begin{equation*}
	M\leq r_{n+1}v_{n+1}+\sum_{i\in S}r_iv_i=M+\epsilon_1|S|/n\leq (1+\epsilon_1)M.
	\end{equation*}
    Here, the first inequality is due to $r_{n+1}v_{n+1}=M$ and the last inequality is due to $|S|/n\leq 1\leq M$. Similarly, we get
    \begin{equation*}
    M|S|\leq 1+v_{n+1}+\sum_{i\in S}v_i=1+\delta+M|S|\leq (1+\epsilon_1)M|S|.
    \end{equation*}
Here, the third inequality is due to $\epsilon_1M|S|\geq \epsilon_1M\geq n\geq 2\geq 1+\delta$.
	Therefore for every $S\subseteq\{1,2,\dots,n\}$, we have
	\begin{equation}
		\dfrac{1}{(1+\epsilon_1)|S|}\leq R(S\cup\{n+1\})\leq\dfrac{1+\epsilon_1}{|S|}.\label{eq:revenue_versus_cover}
	\end{equation}

    Consider any feasible solution $S$ of the set cover problem, we have that $S\cup\{n+1\}$ is a feasible solution to \eqref{prob:deterministic_single_segment}, and by \eqref{eq:revenue_versus_cover} we get
    \begin{equation}
    R(S\cup\{n+1\})\geq \dfrac{1}{(1+\epsilon_1)|S|}.\label{eq:opt_lower_bound}
    \end{equation}
    Define $S^*_{\text{assort}}$ as the optimal assortment to~\eqref{prob:deterministic_single_segment}, and $S^*_{\text{cover}}$ as the optimal solution to the set cover problem, then by~\eqref{eq:opt_lower_bound}~we have
    \begin{equation*}
    R(S^*_{\text{assort}})=\max_{S\in\mathcal{S}}R(S\cup\{n+1\})\geq\max_{S\in\mathcal{S}}\dfrac{1}{(1+\epsilon_1)|S|}=\dfrac{1}{(1+\epsilon_1)\min_{S\in\mathcal{S}}|S|}=\dfrac{1}{(1+\epsilon_1)|S^*_{\text{cover}}|}.
    \end{equation*}
    Here, the first equation is due to our conclusion that the optimal assortment must contain product $n+1$. Consider any feasible solution $S$ of \eqref{prob:deterministic_single_segment}, we have that $S\cap\{1,2,\dots,n\}$ is a feasible solution to the set cover problem, and by~\eqref{eq:revenue_versus_cover} we get
    \begin{equation*}
    R(S)\leq R(S\cup\{n+1\})\leq \dfrac{1+\epsilon_1}{|S\cap\{1,\dots,n\}|}.
    \end{equation*}
    Suppose $S$ is an $\alpha$-approximation to~\eqref{prob:deterministic_single_segment}, then
    \begin{equation*}
    \dfrac{|S\cap\{1,\dots,n\}|}{|S^*_{\text{cover}}|}\leq \dfrac{1+\epsilon_1}{R(S)}\cdot (1+\epsilon_1)R(S^*_{\text{assort}})=(1+\epsilon_1)^2\dfrac{R(S_{\text{assort}}^*)}{R(S)}\leq \dfrac{(1+\epsilon_1)^2}{\alpha}.
    \end{equation*}
    Therefore $S\cap\{1,2,\dots,n\}$ is a $(1+\epsilon_1)^2/\alpha$ approximation to the set cover problem. Specifically, if we set $\alpha=(1+\epsilon)/\log K$, then by definition of $\epsilon_1$ we get
    \begin{equation*}
    \dfrac{(1+\epsilon_1)^2}{\alpha}=\dfrac{(1+\epsilon_1)^2}{1+\epsilon}\cdot\log K=\dfrac{(1-\epsilon_1)(1+\epsilon_1)^2}{(1+\epsilon_1)^2}\cdot\log K=(1-\epsilon_1)\log K.
    \end{equation*}
    Thus we conclude that if $S$ is a $(1+\epsilon)/\log K$ approximation to~\eqref{prob:deterministic_single_segment}, then $S\cap\{1,2,\dots,n\}$ is a $(1-\epsilon_1)\log K$ approximation to the set cover problem.
		
	By~\cite{dinur2014analytical}, the set cover problem is NP-hard to approximate within a factor of $(1-\epsilon_1)\log K$ for any $\epsilon_1>0$. Thus~\eqref{prob:deterministic_single_segment}~is also NP-hard to approximate within a factor of $(1+\epsilon)/\log K$ for any $\epsilon>0$.

{ 

\section{DAOC with Cardinality Constraint}\label{sec:daoc_cardinality}

In this section, we consider the assortment optimization problem with covering and cardinality constraints. Specifically, the seller deterministically offers an assortment $S$ to maximize the total expected revenue, subject to the covering constraints $|S\cap C_k|\geq\ell_k$ for all $k\in\{1,2,\dots,K\}$ as well as a cardinality constraint $|S|\leq L$. We refer to this problem as \eqref{prob:daoc_cardinality}, and its formal definition is given by

\begin{equation}\tag{DAOC-Cardinality}\label{prob:daoc_cardinality}
\begin{aligned}
&\max_{S\subseteq\mathcal{N}}&&R(S)\\
&\text{s.t.}&&|S\cap C_k|\geq \ell_k,\ \forall k\in\{1,2,\dots,K\},\\
&&&|S|\leq L.
\end{aligned}
\end{equation}

In Theorem~\ref{thm:hardness_single_segment}, we have proven that \eqref{prob:deterministic_single_segment} is NP-hard to approximate within a factor of $(1+\epsilon)/\log K$ for any $\epsilon>0$. This suggests that \eqref{prob:daoc_cardinality} is NP-hard to approximate within a factor of $(1+\epsilon)/\log K$ for any $\epsilon>0$, even in the special case of $L=n$. Furthermore, minimizing $|S|$ subject to the covering constraints is also known to be NP-hard to approximate within a factor of $(1-\epsilon)\log K$ for all $\epsilon>0$ (see \citealt{dinur2014analytical}). This also suggests that it is NP-hard to determine whether \eqref{prob:daoc_cardinality} is feasible. Therefore, the best one can hope for is a bicriteria approximation algorithm that, when the problem is feasible, returns an approximate solution $S$ that satisfies the covering constraints, attains an expected revenue at least $\Omega(1/\log K)$ fraction of the optimal expected revenue, and satisfies the cardinality constraint up to a factor of $O(\log K)$. In other words, the assortment returned by the algorithm has a cardinality at most $L\cdot O(\log K)$, and violates the cardinality constraint by a factor of at most $O(\log K)$.

We present an algorithm that finds such a bicriteria approximate solution. Similar to Algorithm \ref{alg:single_segment}, our algorithm also consists of two steps: in the first step, we find a approximate solution $\hat{S}$ of a weighted cover problem using greedy algorithm; in the second step, we optimally expand the assortment $\hat{S}$ obtained from the previous step subject to some cardinality constraint. 

As suggested in Lemma~\ref{lemma:optimal_expansion}, expected revenue of the optimal expansion of an assortment $\hat{S}$ is closely related to the sum of the preference weights of $\hat{S}$. Therefore we would like the sum of the preference weights $\sum_{i\in \hat{S}}v_i$ to be small. On the other hand, with the cardinality constraint, the cardinality of $\hat{S}$ should not be excessively large. A natural approach of limiting the cardinality of $\hat{S}$ is simply to add a cardinality constraint to the weighted cover problem, but in this case the problem with both covering constraints and cardinality constraint could be challenging to solve. Therefore we consider an alternative approach that limits the cardinality of $\hat{S}$ by a penalty term. Specifically, in the first step, we approximately solve the following weighted cover problem

\begin{equation}
\label{prob:weighted_cover_cardinality}
\begin{aligned}
&\max_{S\subseteq\mathcal{N}}&&\gamma|S|+\sum_{i\in S}v_i\\
&\text{s.t.}&&|S\cap C_k|\geq \ell_k,\ \forall k\in\{1,2,\dots,K\}.
\end{aligned}
\end{equation}
In Problem~\eqref{prob:weighted_cover_cardinality}, the objective function is the sum of preference weights plus the penalty term $\gamma|S|$, where $\gamma>0$ is the penalty parameter for cardinality of $S$. Since the objective function in Problem~\eqref{prob:weighted_cover_cardinality} is still a linear function, Problem~\eqref{prob:weighted_cover_cardinality}~can be approximately solved using greedy algorithm. In our algorithm, we need to run a grid search to find an appropriate penalty parameter $\gamma$. Letting $L_\gamma=\lceil\log_{1+\epsilon}(v_{\max}/v_{\min})\rceil$, the set of grid points for $\gamma$ is given by
\begin{equation*}
\Gamma_\epsilon=\Big\{v_{\min}\cdot (1+\epsilon)^\ell\Big\}_{\ell=0}^{L_\gamma}.
\end{equation*}
In the second step, we expand the assortment $\hat{S}$ optimally such that at most $L$ products are added to the assortment. In other words, we solve the following problem
\begin{equation}\label{prob:optimal_expansion_cardinality}
\bar{S}=\argmax_{S\supseteq\hat{S},\,|S\backslash \hat{S}|\leq L} R(S).
\end{equation}
Since the constraints in Problem~\eqref{prob:optimal_expansion_cardinality} is totally unimodular, Problem~\eqref{prob:optimal_expansion_cardinality}~can be solved in polynomial time using the algorithm in \cite{sumida2021revenue}. The complete algorithm is provided in Algorithm~\ref{alg:daoc_cardinality}.

\begin{algorithm}[!ht]
\caption{Bicriteria Approximation algorithm for~\eqref{prob:daoc_cardinality}}
\SingleSpacedXI
\label{alg:daoc_cardinality}
\begin{algorithmic}
\For{$\gamma\in \Gamma_\epsilon$}
\State Initialize $\hat{S}_\gamma\leftarrow\varnothing$
\While{there exists $k\in\{1,2,\dots,K\}$ such that $|\hat{S}_\gamma\cap C_k|<\ell_k$}
\State Set $c_i=|\{k\in\{1,2,\dots,K\}:\,i\in C_k,\,|C_k\cap \hat{S}_\gamma|<\ell_k\}|$ for all $i\in\mathcal{N}\backslash\hat{S}$
\State Set $i^*=\arg\min_{i\in \mathcal{N}\backslash\hat{S}} (v_i+\gamma)/c_i$, update $\hat{S}_\gamma\leftarrow \hat{S}_\gamma\cup\{i^*\}$
\EndWhile
\If{$|\hat{S}_\gamma|>(2\log K+2)L$}
\State Set $\bar{S}_\gamma=\varnothing$
\Else
\State Set $\bar{S}_\gamma$ as
\begin{equation*}
\bar{S}_{\gamma}=\underset{S\supseteq\hat{S}_{\hat{\gamma}},\,|S\backslash\hat{S}_{\hat{\gamma}}|\leq L}{\mathrm{argmax}} R(S)
\end{equation*}
\EndIf
\EndFor
\State Return $\bar{S}_\gamma$ that maximizes the expected revenue over all $\gamma\in \Gamma_\epsilon$
\end{algorithmic}
\end{algorithm}

In the following theorem, we show that the assortment returned by Algorithm~\ref{alg:daoc_cardinality} satisfies the cardinality constraints up to a factor of $O(\log K)$, and attains an expected revenue at least $\Omega(1/\log K)$ fraction of the optimal value of Problem~\eqref{prob:daoc_cardinality}.

\begin{theorem}
If \eqref{prob:daoc_cardinality} is feasible, then the assortment $S$ returned by Algorithm~\ref{alg:daoc_cardinality} satisfies the covering constraints, satisfies $|S|\leq (2\log K+3)L$, and generates an expected revenue at least $1/((2+\epsilon)\log K+3+\epsilon)$ fraction of the optimal value of \eqref{prob:daoc_cardinality}.

\end{theorem}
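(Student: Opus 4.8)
The plan is to mirror the two-step analysis of Theorem~\ref{thm:single_segment}, but to track the cardinality of the greedy cover alongside its preference weight, and to choose the penalty parameter $\gamma$ so that both quantities are simultaneously controlled. Two of the three claims are immediate from the structure of Algorithm~\ref{alg:daoc_cardinality}: whenever the algorithm does not zero out a candidate, it returns $\bar S_\gamma \supseteq \hat S_\gamma$ with $\hat S_\gamma$ covering-feasible, so the covering constraints hold; and since $\bar S_\gamma$ adds at most $L$ products to $\hat S_\gamma$ while the surviving candidates satisfy $|\hat S_\gamma|\le (2\log K+2)L$, we get $|\bar S_\gamma|\le (2\log K+3)L$. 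Thus I only need to exhibit one grid point $\hat\gamma$ whose candidate survives the cardinality filter and attains the claimed revenue.

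Let $S^*$ denote an optimal solution of~\eqref{prob:daoc_cardinality} and set $\gamma^\star = (\sum_{i\in S^*}v_i)/|S^*|$, the average preference weight over $S^*$. Since each $v_i\in[v_{\min},v_{\max}]$, we have $\gamma^\star\in[v_{\min},v_{\max}]$, so the geometric grid $\Gamma_\epsilon$ contains a point $\hat\gamma$ with $\gamma^\star\le \hat\gamma\le (1+\epsilon)\gamma^\star$. Running the greedy on the penalized cover~\eqref{prob:weighted_cover_cardinality} with element weights $v_i+\hat\gamma$ yields, by the $H_K$-guarantee of~\cite{lovasz1975ratio} and the feasibility of $S^*$ for the covering constraints,
\[ \sum_{i\in\hat S_{\hat\gamma}}(v_i+\hat\gamma)\ \le\ H_K\Big(\sum_{i\in S^*}v_i+\hat\gamma\,|S^*|\Big). \]
Two consequences follow. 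Dividing by $\hat\gamma$ and using $\sum_{i\in S^*}v_i/\hat\gamma\le \sum_{i\in S^*}v_i/\gamma^\star=|S^*|$ gives $|\hat S_{\hat\gamma}|\le 2H_K|S^*|\le (2\log K+2)L$; note that here the factor $\hat\gamma$ cancels, so the grid slack $(1+\epsilon)$ does \emph{not} enter and the cardinality filter is not triggered. Keeping instead only the $v_i$ terms and bounding $\hat\gamma|S^*|\le(1+\epsilon)\gamma^\star|S^*|=(1+\epsilon)\sum_{i\in S^*}v_i$ gives the weight bound $\sum_{i\in\hat S_{\hat\gamma}}v_i\le H_K(2+\epsilon)\sum_{i\in S^*}v_i$; here the slack does enter.

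For the revenue, the key observation, and the step I expect to be the crux, is that I cannot invoke Lemma~\ref{lemma:optimal_expansion} directly, because its knapsack-based expansion $S'$ need not respect the cardinality budget of~\eqref{prob:optimal_expansion_cardinality}. Instead I use $S^*$ itself as the additive set: since $|S^*|\le L$, the assortment $\hat S_{\hat\gamma}\cup S^*$ adds at most $L$ products to $\hat S_{\hat\gamma}$ and is therefore feasible for the restricted expansion, so $R(\bar S_{\hat\gamma})\ge R(\hat S_{\hat\gamma}\cup S^*)$. Bounding the numerator below by $\sum_{i\in S^*}r_iv_i$ and the denominator above by $1+\sum_{i\in\hat S_{\hat\gamma}}v_i+\sum_{i\in S^*}v_i$, then substituting the weight bound exactly as in the proof of Lemma~\ref{lemma:optimal_expansion}, yields
\[ R(\bar S_{\hat\gamma})\ \ge\ \frac{1}{H_K(2+\epsilon)+1}\,R(S^*). \]
Finally $H_K\le \log K+1$ gives $H_K(2+\epsilon)+1\le (2+\epsilon)\log K+3+\epsilon$, and since Algorithm~\ref{alg:daoc_cardinality} returns the revenue-maximizing surviving candidate, its output is covering-feasible with revenue at least that of $\bar S_{\hat\gamma}$, establishing all three claims. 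The only remaining bookkeeping is to confirm that a nonempty (surviving) candidate is actually selected, which holds because $R(\bar S_{\hat\gamma})>0$ whenever $R(S^*)>0$.
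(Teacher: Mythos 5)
Your proposal is correct and follows essentially the same route as the paper's proof: the same choice of grid point $\hat\gamma$ bracketing $\sum_{i\in S^*}v_i/|S^*|$, the same $H_K$ greedy guarantee applied to the penalized cover with $S^*$ as the feasible comparator, the same observation that the $(1+\epsilon)$ slack cancels in the cardinality bound but enters the weight bound, and the same expansion argument for the revenue. The only difference is cosmetic: where the paper introduces a knapsack witness $\underline{S}$ (optimal under both the weight budget $\sum_{i\in S^*}v_i$ and the cardinality bound $L$) and then bounds its revenue and weight by those of $S^*$ anyway, you use $S^*$ itself as the feasible expansion of $\hat S_{\hat\gamma}$ — a slight streamlining that yields the identical bound $R(\bar S_{\hat\gamma})\geq R(S^*)/\left((2+\epsilon)\log K+3+\epsilon\right)$.
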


\begin{proof}

It is easy to verify that for all $\hat{\gamma}\in\Gamma_\epsilon$, if $\bar{S}_{\hat{\gamma}}\neq \varnothing$, then $\bar{S}_{\hat{\gamma}}$ satisfies the covering constraints, and \mbox{$|\bar{S}_{\hat{\gamma}}|\leq (2\log K+3)L$.} Therefore it suffices to show that if~\eqref{prob:daoc_cardinality}~is feasible, then there exists $\hat{\gamma}\in\Gamma_\epsilon$ such that $R(\bar{S}_{\hat{\gamma}})$ is at least $1/((2+\epsilon)\log K+3+\epsilon)$ fraction of the optimal value of~\eqref{prob:daoc_cardinality}.

Suppose \eqref{prob:daoc_cardinality} is feasible, and let $S^*$ be an optimal solution. It is easy to verify that $S^*\neq \varnothing$, thus $|S^*|>0$. Then we have \mbox{$|S^*|v_{\min}\leq \sum_{i\in S^*}v_i\leq |S^*|v_{\max}$,} thus \mbox{$v_{\min}\leq \sum_{i\in S^*}v_i/|S^*|\leq v_{\max}$.} Therefore there exists $\hat{\gamma}\in\Gamma_\epsilon$ such that $\sum_{i\in S^*}v_i/|S^*|\leq \hat{\gamma}\leq (1+\epsilon)\sum_{i\in S^*}v_i/|S^*|$, which implies 
\begin{equation}\label{eq:condition_gamma}
\sum_{i\in S^*}v_i\leq \hat{\gamma}|S^*|\leq (1+\epsilon)\sum_{i\in S^*}v_i.
\end{equation}

For the rest of the proof, we proceed in two steps. In the first step, we show that \mbox{$|\hat{S}_{\hat{\gamma}}|\leq (2\log K+2)L$.} This step is to ensure that $\bar{S}_{\hat{\gamma}}\neq\varnothing$. In the second step, we show that
\begin{equation*}
R(\bar{S}_{\hat{\gamma}})\geq \dfrac{R(S^*)}{(2+\epsilon)\log K+3+\epsilon}.
\end{equation*}

\noindent\underline{\bf Step 1. $|\hat{S}_{\hat{\gamma}}|\leq (2\log K+2)L$:} Let $\tilde{S}_{\hat{\gamma}}$ be an optimal solution to Problem~\eqref{prob:weighted_cover_cardinality} when $\gamma=\hat{\gamma}$. By \cite{lovasz1975ratio} we have that $\hat{S}_{\hat{\gamma}}$ is a $(\log K+1)$-approximation to Problem~\eqref{prob:weighted_cover_cardinality}. Therefore we get
\begin{equation*}
\sum_{i\in \hat{S}_{\hat{\gamma}}} (v_i+\hat{\gamma})\leq (\log K+1)\cdot \sum_{i\in \tilde{S}_{\hat{\gamma}}} (v_i+\hat{\gamma}).
\end{equation*}
Furthermore, since $S^*$ satisfies the covering constraints, $S^*$ is a feasible solution to Problem~\eqref{prob:weighted_cover_cardinality}, therefore $\sum_{i\in \tilde{S}_{\hat{\gamma}}} (v_i+\hat{\gamma})\leq \sum_{i\in S^*} (v_i+\hat{\gamma})$. In this case, we have
\begin{equation*}
\sum_{i\in \hat{S}_{\hat{\gamma}}} (v_i+\hat{\gamma})\leq (\log K+1)\cdot \sum_{i\in S^*}(v_i+\hat{\gamma}).
\end{equation*}
By \eqref{eq:condition_gamma} we have $\sum_{i\in S^*}v_i\leq \hat{\gamma}|S^*|$, therefore
\begin{equation*}
\hat{\gamma}|\hat{S}_{\hat{\gamma}}|\leq \sum_{i\in \hat{S}_{\hat{\gamma}}} (v_i+\hat{\gamma})\leq (\log K+1)\cdot \sum_{i\in S^*} (v_i+\hat{\gamma})\leq (2\log K+2)\hat{\gamma}|S^*|.
\end{equation*}
Since $S^*$ is feasible to Problem~\eqref{prob:weighted_cover_cardinality}, we have $|S^*|\leq L$, thus $|\hat{S}_{\hat{\gamma}}|\leq (2\log K+2)|S^*|\leq (2\log K+2)L$. 

\noindent\underline{\bf Step 2. $R(\bar{S}_{\hat{\gamma}})\geq {R(S^*)}/{((2+\epsilon)\log K+3+\epsilon)}$:} By~\eqref{eq:condition_gamma} we have $\hat{\gamma}|S^*|\leq (1+\epsilon)\sum_{i\in S^*}v_i$. Therefore
\begin{equation*}
\sum_{i\in \hat{S}_{\hat{\gamma}}}v_i\leq \sum_{i\in \hat{S}_{\hat{\gamma}}}(v_i+\hat{\gamma})\leq (\log K+1)\cdot\sum_{i\in S^*}(v_i+\hat{\gamma})\leq ((2+\epsilon)\log K+2+\epsilon)\cdot \sum_{i\in S^*}v_i.
\end{equation*}
Let $\underline{S}$ be an optimal solution to the following problem
\begin{equation}\label{prob:knapsack_cardinality}
\begin{aligned}
&\max_{S\subseteq\mathcal{N}}&&\sum_{i\in S}r_iv_i\\
&\text{s.t.}&&\sum_{i\in S}v_i\leq \sum_{i\in S^*}v_{i}\\
&&&|S|\leq L.
\end{aligned}
\end{equation}
It is easy to verify that $S^*$ is a feasible solution to~\eqref{prob:knapsack_cardinality}. Then we get $\sum_{i\in \underline{S}}r_iv_i\geq \sum_{i\in S^*}r_iv_i$. Therefore
\begin{align*}
R(\hat{S}_{\hat{\gamma}}\cup \underline{S})=&\dfrac{\sum_{i\in \hat{S}_{\hat{\gamma}}\cup \underline{S}}r_iv_i}{1+\sum_{i\in \hat{S}_{\hat{\gamma}}\cup \underline{S}}v_i}\geq \dfrac{\sum_{i\in \underline{S}}r_iv_i}{1+\sum_{i\in \hat{S}_{\hat{\gamma}}}v_i+\sum_{i\in \underline{S}}v_i}\\
\geq &\dfrac{\sum_{i\in S^*}r_iv_i}{1+((2+\epsilon)\log K+2+\epsilon)\sum_{i\in S^*}v_i+\sum_{i\in S^*}v_i}\\
\geq &\dfrac{\sum_{i\in S^*}r_iv_i}{((2+\epsilon)\log K+3+\epsilon)(1+\sum_{i\in S^*}v_i)}=\dfrac{R(S^*)}{(2+\epsilon)\log K+3+\epsilon}.
\end{align*}
Since $\hat{S}_{\hat{\gamma}}\cup\bar{S}\supseteq \hat{S}_{\hat{\gamma}}$ and $|(\hat{S}_{\hat{\gamma}}\cup \underline{S})\backslash \hat{S}_{\hat{\gamma}}|\leq |\underline{S}|\leq L$, by definition of $\bar{S}_{\hat{\gamma}}$ we have $$R(\bar{S}_{\hat{\gamma}})\geq R(\hat{S}_{\hat{\gamma}}\cup \underline{S})\geq R(S^*)/((2+\epsilon)\log K+3+\epsilon).$$ Finally, since the assortment returned by the algorithm generates an expected revenue greater than or equal to $R(\bar{S}_{\hat{\gamma}})$, we have that the assortment returned by the algorithm generates an expected revenue greater than or equal to $1/((2+\epsilon)\log K+3+\epsilon)$ fraction of the optimal value of~\eqref{prob:daoc_cardinality}.
\end{proof}

}

\section{Randomized Multi-Segment Assortment Optimization}\label{sec:multi_segment_randomized}

In this section, we consider the randomized multi-segment assortment optimization problem with covering constraints. In this problem, the seller customizes assortments over multiple customer segments indexed by $\mathcal{M}=\{1,2,\dots,m\}$. Each customer segment $j\in\mathcal{M}$ is associated with an arrival probability $\theta_j$ and a MNL model with preference weights $v_{ij}$ for all $i\in\mathcal{N}$. Products can be classified into categories $C_1,C_2,\dots,C_K$, and each category $C_k$ is associated with a minimum threshold $\ell_k$ for all $k\in\{1,2,\dots,K\}$. Similar to~\cref{sec:multi_segment}, the expected number of products from category $C_k$ {  offered} to customers should be at least $\ell_k$.

The decision of the seller in the randomized setting is to decide for each customer segment $j \in {\cal M}$ a distribution over assortments $q_j(\cdot)$ to offer. In this setting, the expected number of products from category $C_k$ {  offered} to customers is given by $\sum_{j\in\mathcal{M}}\sum_{S\subseteq\mathcal{N}}\theta_jq_j(S)|S\cap C_k|$, thus the covering constraints can be written as $\sum_{j\in\mathcal{M}}\sum_{S\subseteq\mathcal{N}}\theta_jq_j(S)|S\cap C_k|\geq \ell_k$.
We refer to this problem as~\eqref{prob:randomized_multi_segment}, and its formal definition is provided by
\begin{equation}\tag{m-RAOC}\label{prob:randomized_multi_segment}
\begin{aligned}
&\max_{\{q_j(\cdot)\}_{j\in\mathcal{M}}}&&{\sum_{j\in\mathcal{M}}\sum_{S\subseteq\mathcal{N}}\theta_jR_j(S)q_j(S)}\\
&\text{s.t.}&&{\sum_{j\in\mathcal{M}}\sum_{S\subseteq\mathcal{N}}\theta_j|S\cap C_k|q_j(S)\geq \ell_k,\ \forall k\in\{1,2,\dots,K\},}\\
&&&{\sum_{S\subseteq\mathcal{N}}q_j(S)=1,\ \forall j\in\mathcal{M},}\\
&&&{q_j(S)\geq 0,\ \forall S\subseteq\mathcal{N}.}\\
\end{aligned}
\end{equation}
In the following, we prove that~\eqref{prob:randomized_multi_segment}~can be rewritten as an equivalent linear program. The proof follows similar ideas as in the single segment randomized case discussed in Section \ref{sec:randomized_setting}. Our candidate linear program is given by
\begin{equation}\label{prob:lp_randomized_multi_segment}\tag{m-RAOC-LP}
\begin{aligned}
&\max_{\mathbf{x},\mathbf{y}}&&{\sum_{j\in\mathcal{M}}\sum_{i\in\mathcal{N}}\theta_jr_iv_{ij}x_{ij}}\\
&\text{s.t.}&&{x_{0j}+\sum_{i\in\mathcal{N}}v_{ij}x_{ij}=1,\ \forall j\in\mathcal{M},}\\
&&&{x_{ij}\leq x_{0j},\ \forall i\in\mathcal{N},\,j\in\mathcal{M},}\\
&&&{0\leq y_{ii'j}\leq x_{ij},\ \forall i,i'\in\mathcal{N},\,j\in\mathcal{M},}\\
&&&{y_{ii'j}\leq x_{i'j},\ \forall i,i'\in\mathcal{N},\,j\in\mathcal{M},}\\
&&&{\sum_{i\in C_k}\sum_{j\in\mathcal{M}}\theta_j\left(x_{ij}+\sum_{i'\in\mathcal{N}}v_{i'j}y_{ii'j}\right)\geq \ell_k,\ \forall k\in\{1,2,\dots,K\}.}
\end{aligned}
\end{equation}
Consider an optimal solution $(\mathbf{x}^*,\mathbf{y}^*)$ of \eqref{prob:lp_randomized_multi_segment}. For each $j \in {\cal M}$, we define $\{i_{1j},\dots,i_{nj}\}$ as a permutation of $\{1,\dots,n\}$ such that $x^*_{i_{1j}j}\geq x^*_{i_{2j}j}\geq\dots x^*_{i_{nj}j}$, and $S_{\ell j}$ is denoted as $\{i_{1j},i_{2j},\dots,i_{\ell j}\}$. We construct a distribution over assortments $q_j^*(\cdot)$ for customer segment $j$ by
\begin{equation}
q^*_j(S)=\begin{cases}
\left(1+\sum_{i'=1}^\ell v_{i'j}\right)(x^*_{i_{i'j}j}-x^*_{i_{i'+1,j}j}),\ &\text{if}\ S=S_{\ell j}\ \text{for some}\ \ell\in\{1,2,\dots,n\},\\
1-\sum_{\ell=1}^n q_j(S_{\ell j}),\ &\text{if}\ S=\varnothing,\\
0,\ &\text{otherwise},
\end{cases}\label{eq:solution_recover_multi_segment}
\end{equation}
with the notation $x^*_{i_{n+1,j}j}=0$.
In the following, we show that we can obtain an optimal solution to~\eqref{prob:randomized_multi_segment}~using an optimal solution to~\eqref{prob:lp_randomized_multi_segment}.

\begin{theorem}\label{thm:randomized_multi_segment_lp}
Let $(\mathbf{x}^*,\mathbf{y}^*)$ be the optimal solution to~\eqref{prob:lp_randomized_multi_segment}, and $\{q_j^*(\cdot)\}_{j\in\mathcal{M}}$ is defined in~\eqref{eq:solution_recover_multi_segment}, then $\{q_j^*(\cdot)\}_{j\in\mathcal{M}}$ is the optimal solution to~\eqref{prob:randomized_multi_segment}.
\end{theorem}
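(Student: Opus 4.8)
The plan is to follow the two-step template used in the proof of Theorem~\ref{thm:lp_randomized_single_segment}, establishing the two inequalities $\eqref{prob:randomized_multi_segment}\le\eqref{prob:lp_randomized_multi_segment}$ and $\eqref{prob:randomized_multi_segment}\ge\eqref{prob:lp_randomized_multi_segment}$ separately. The crucial observation is that the objective of~\eqref{prob:randomized_multi_segment} and all of its constraints except the covering constraints decouple across the $m$ segments; the covering constraints are the only coupling, and they enter linearly as a $\theta_j$-weighted sum over segments. Consequently, the whole analysis reduces to replaying the single-segment arguments one segment at a time, with the preference weights $v_i$ replaced by $v_{ij}$, and then summing the resulting per-segment identities against $\theta_j$.

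For the direction $\eqref{prob:randomized_multi_segment}\le\eqref{prob:lp_randomized_multi_segment}$, I would first prove a segment-wise analog of Lemma~\ref{lemma:nested_support}: there is an optimal solution $\{q_j^*(\cdot)\}_{j\in\mathcal{M}}$ in which every $q_j^*(\cdot)$ is supported on a nested chain of assortments. To see this, introduce auxiliary variables $p_{ij}$ representing a lower bound on the probability that segment $j$ is offered product $i$, rewrite~\eqref{prob:randomized_multi_segment} with the constraints $\sum_{S\ni i}q_j(S)\ge p_{ij}$ and $\sum_{j\in\mathcal{M}}\theta_j\sum_{i\in C_k}p_{ij}\ge\ell_k$, and fix $\mathbf{p}$ at an optimal value $\mathbf{p}^*$. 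The residual problem then splits into $m$ independent single-segment fair-assortment problems of the form~\eqref{prob:randomized_single_segment_3}, each of which admits a nested optimal solution by Corollary~1 of~\cite{lu2023simple}. Given such nested supports, for each segment I define $x^*_{ij}$ by the per-segment version of~\eqref{eq:purchase_probability_ratio} and $x^*_{0j}$ as the corresponding no-purchase probability, and set $y^*_{ii'j}=\min\{x^*_{ij},x^*_{i'j}\}$. The first two families of LP constraints hold by the same algebra as in the single-segment case. Applying Lemma~\ref{lemma:visibility_representation} with weights $v_{ij}$ shows that the offer probability of product $i$ in segment $j$ equals $x^*_{ij}+\sum_{i'\in\mathcal{N}}v_{i'j}y^*_{ii'j}$; multiplying by $\theta_j$, summing over $j$ and over $i\in C_k$, and using feasibility of $\{q_j^*(\cdot)\}_{j\in\mathcal{M}}$ recovers the LP covering constraint, while the objectives match by construction.

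For the reverse direction $\eqref{prob:randomized_multi_segment}\ge\eqref{prob:lp_randomized_multi_segment}$, I would take an optimal $(\mathbf{x}^*,\mathbf{y}^*)$, assume without loss that $y^*_{ii'j}=\min\{x^*_{ij},x^*_{i'j}\}$, and for each segment sort the coordinates $x^*_{ij}$ in decreasing order to build $q_j^*(\cdot)$ via~\eqref{eq:solution_recover_multi_segment}. Checking that each $q_j^*(\cdot)$ is a genuine probability mass function reduces to the same telescoping computation as in the single-segment proof, with $q_j^*(\varnothing)\ge 0$ following from the first two LP constraints of segment $j$. Recovering the purchase probabilities and invoking Lemma~\ref{lemma:visibility_representation} per segment, then summing against $\theta_j$, turns the LP covering constraint into feasibility for~\eqref{prob:randomized_multi_segment}, and the objective values coincide. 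I expect the main obstacle to be the segment-wise nested-support reduction: one must verify that fixing the offer-probability bounds $p_{ij}$ at their joint optimum genuinely decouples the problem into independent per-segment programs, and that recombining their nested optima is without loss of optimality. Once this decoupling is justified, the remainder is a faithful and essentially mechanical transcription of the single-segment proof of Theorem~\ref{thm:lp_randomized_single_segment} with $v_i$ replaced by $v_{ij}$ and a final $\theta_j$-weighted summation over segments.
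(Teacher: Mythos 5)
Your proposal is correct and follows essentially the same route as the paper: the same auxiliary-variable reformulation with $p_{ij}$ to obtain a per-segment nested-support lemma (via Corollary~1 of \cite{lu2023simple}), followed by the same two-step inequality argument transcribing the single-segment proof with $v_{ij}$ in place of $v_i$ and a $\theta_j$-weighted summation for the covering constraints. The decoupling step you flag as the main obstacle is handled in the paper exactly as you describe: once $\mathbf{p}$ is fixed at its optimal value, the covering constraints no longer involve the $q_j(\cdot)$'s, so the residual problem separates into independent per-segment programs and replacing each $q_j^*$ by a nested optimum preserves both feasibility and the objective value.
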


To prove~\cref{thm:randomized_multi_segment_lp}, we first prove the following lemma.

\begin{lemma}\label{lemma:nested_support_multi_segment}
There exists an optimal solution $\{q_j^*(\cdot)\}_{j\in\mathcal{M}}$ to~\eqref{prob:randomized_multi_segment} 
such that for every $j\in\mathcal{M}$, there exists a sequence of assortments $\{S_{ij}\}_{i=1}^m$ such that $S_{1j}\supseteq S_{2j}\supseteq \dots \supseteq S_{mj}$ and $q_j^*(S)=0$ if $S\notin \{S_{ij}\}_{i=1}^m$. 
\end{lemma}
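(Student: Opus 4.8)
The plan is to mirror the proof of Lemma~\ref{lemma:nested_support}, the only new feature being that the covering constraints now couple the $m$ segments together through the arrival probabilities. The first step is to decouple them by introducing, for each $i\in\mathcal{N}$ and $j\in\mathcal{M}$, an auxiliary variable $p_{ij}$ that represents a lower bound on the probability that product $i$ is offered to segment $j$. Using the identity $\sum_{S\subseteq\mathcal{N}}|S\cap C_k|q_j(S)=\sum_{i\in C_k}\sum_{S\ni i}q_j(S)$, I would rewrite \eqref{prob:randomized_multi_segment} as the equivalent problem
\begin{equation*}
\begin{aligned}
&\max_{\{q_j(\cdot)\}_{j\in\mathcal{M}},\,\mathbf{p}} && \sum_{j\in\mathcal{M}}\sum_{S\subseteq\mathcal{N}}\theta_j R_j(S)q_j(S)\\
&\text{s.t.} && \sum_{S\ni i}q_j(S)\geq p_{ij},\ \forall i\in\mathcal{N},\,j\in\mathcal{M},\\
&&& \sum_{j\in\mathcal{M}}\theta_j\sum_{i\in C_k}p_{ij}\geq \ell_k,\ \forall k\in\{1,2,\dots,K\},\\
&&& \sum_{S\subseteq\mathcal{N}}q_j(S)=1,\ \forall j\in\mathcal{M},\\
&&& q_j(S)\geq 0,\ \forall S\subseteq\mathcal{N},\,j\in\mathcal{M}.
\end{aligned}
\end{equation*}
Equivalence holds because the covering constraint of \eqref{prob:randomized_multi_segment} is implied whenever the two displayed families of constraints on $\mathbf{p}$ hold, and conversely one may always set $p_{ij}=\sum_{S\ni i}q_j(S)$ without changing the objective.

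Next I would take an optimal solution $(\{q_j^*(\cdot)\}_{j\in\mathcal{M}},\mathbf{p}^*)$ of this reformulation, fix $\mathbf{p}=\mathbf{p}^*$, and optimize only over $\{q_j(\cdot)\}$. Since both the objective and the remaining constraints are separable across segments, the problem splits into $m$ independent subproblems; dividing out the positive constant $\theta_j$, the subproblem for segment $j$ reads
\begin{equation*}
\begin{aligned}
&\max_{q_j(\cdot)} && \sum_{S\subseteq\mathcal{N}} R_j(S)q_j(S)\\
&\text{s.t.} && \sum_{S\ni i}q_j(S)\geq p^*_{ij},\ \forall i\in\mathcal{N},\\
&&& \sum_{S\subseteq\mathcal{N}}q_j(S)=1,\\
&&& q_j(S)\geq 0,\ \forall S\subseteq\mathcal{N}.
\end{aligned}
\end{equation*}
This is exactly of the form of \eqref{prob:randomized_single_segment_3}, now for the MNL model of segment $j$. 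Hence, by Corollary~1 of \cite{lu2023simple} (invoked just as in the proof of Lemma~\ref{lemma:nested_support}), each subproblem admits an optimal solution $\hat{q}_j(\cdot)$ whose support is a nested chain of assortments.

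Finally I would reassemble $\{\hat{q}_j(\cdot)\}_{j\in\mathcal{M}}$ and verify it is optimal for \eqref{prob:randomized_multi_segment}. Because $q_j^*$ is feasible for the $j$-th subproblem, optimality of $\hat{q}_j$ yields $\sum_{S} R_j(S)\hat{q}_j(S)\geq \sum_{S} R_j(S)q_j^*(S)$ for every $j$; multiplying by $\theta_j$ and summing shows the objective value at $\{\hat{q}_j\}$ is at least that at $\{q_j^*\}$. Moreover $(\{\hat{q}_j\},\mathbf{p}^*)$ stays feasible for the reformulation, since each $\hat{q}_j$ satisfies $\sum_{S\ni i}\hat{q}_j(S)\geq p^*_{ij}$ while $\mathbf{p}^*$ already meets the covering constraints, so $\{\hat{q}_j\}$ is feasible for \eqref{prob:randomized_multi_segment} with value at least the optimum, hence optimal and nested in each segment, which is the claim. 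The only real work here is the bookkeeping in the decoupling step, namely checking that fixing $\mathbf{p}^*$ preserves both feasibility and the global optimum; the nestedness itself transfers verbatim from the single-segment result, so no new structural argument is required.
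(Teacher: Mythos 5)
Your proposal is correct and follows essentially the same route as the paper's proof: both introduce the auxiliary variables $p_{ij}$ to decouple the covering constraints, fix them at an optimal $\mathbf{p}^*$ so the problem separates into $m$ single-segment subproblems of the form \eqref{prob:randomized_single_segment_3}, and then invoke Corollary~1 of \cite{lu2023simple} segment by segment to obtain nested supports. Your explicit verification that the reassembled solution $\{\hat{q}_j\}_{j\in\mathcal{M}}$ remains feasible and optimal is a slightly more careful spelling-out of a step the paper treats as immediate, but the argument is identical in substance.
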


\begin{proof}
We rewrite~\eqref{prob:randomized_multi_segment} as follows
\begin{equation}\label{prob:randomized_multi_segment_2}
\begin{aligned}
&\max_{\{q_j(\cdot)\}_{j\in\mathcal{M}},\mathbf{p}}&&{\sum_{j\in\mathcal{M}}\sum_{S\subseteq\mathcal{N}}\theta_jR_j(S)q_j(S)}\\
&\text{s.t.}&&{\sum_{S\ni i}q_j(S)\geq p_{ij},\ \forall i\in\mathcal{N},\,j\in\mathcal{M},}\\
&&&{\sum_{j\in\mathcal{M}}\sum_{i\in C_k}\theta_jp_{ij}\geq \ell_k,\ \forall k\in\{1,2,\dots,K\},}\\
&&&{\sum_{S\subseteq\mathcal{N}}q_j(S)=1,\ \forall j\in\mathcal{M},}\\
&&&{q_j(S)\geq 0,\ \forall S\subseteq\mathcal{N},\,j\in\mathcal{M}.}
\end{aligned}
\end{equation}
Let $(\{q^*_j(\cdot)\}_{j\in\mathcal{M}},\mathbf{p}^*)$ be an optimal solution of \eqref{prob:randomized_multi_segment_2}. We fix $\mathbf{p}$ in~\eqref{prob:randomized_multi_segment_2}~to be $\mathbf{p}^*$ and only optimizes over $\{q_j(\cdot)\}_{j\in\mathcal{M}}$. We derive the following problem for each $j\in\mathcal{M}$
\begin{equation}\label{prob:randomized_multi_segment_3}
\begin{aligned}
&\max_{q_j(\cdot)}&&{\sum_{S\subseteq\mathcal{N}}\theta_jR_j(S)q_j(S)}\\
&\text{s.t.}&&{\sum_{S\ni i}q_j(S)\geq p^*_{ij},\ \forall i\in\mathcal{N},}\\
&&&{\sum_{S\subseteq\mathcal{N}}q_j(S)=1,}\\
&&&{q_j(S)\geq 0,\ \forall S\subseteq\mathcal{N}.}
\end{aligned}
\end{equation}
By Corollary 1 of \cite{lu2023simple}, for each $j\in\mathcal{M}$ there exists an optimal solution $\hat{q}_j(\cdot)$ to~\eqref{prob:randomized_multi_segment_3}~such that the assortments offered with positive probability are nested. Thus $(\{\hat{q}_j(\cdot)\}_{j\in\mathcal{M}},\mathbf{p}^*)$ is an optimal solution to~\eqref{prob:randomized_multi_segment_2}, and $\{\hat{q}_j(\cdot)\}_{j\in\mathcal{M}}$ is an optimal solution to~\eqref{prob:randomized_multi_segment} that satisfies the conditions stated in the lemma.
\end{proof}

Building on Lemma~\ref{lemma:nested_support_multi_segment}, we give a proof of~\cref{thm:randomized_multi_segment_lp}.

\begin{proof}[Proof of~\cref{thm:randomized_multi_segment_lp}]
We prove the theorem in two steps: first we prove that the optimal objective value of~\eqref{prob:randomized_multi_segment}~is smaller or equal to that of~\eqref{prob:lp_randomized_multi_segment}, then we prove that the optimal objective value of~\eqref{prob:randomized_multi_segment}~is greater or equal to that of~\eqref{prob:lp_randomized_multi_segment}.

\underline{\textbf{Step 1.} \eqref{prob:randomized_multi_segment}$\leq$\eqref{prob:lp_randomized_multi_segment}:} By Lemma~\ref{lemma:nested_support_multi_segment}, there exists an optimal solution $\{q^*_j(\cdot)\}_{j\in\mathcal{M}}$ such that for every $j\in\mathcal{M}$, $q^*_j(\cdot)$ is supported on a sequence of nested assortment $S_{1j}\supseteq S_{2j}\supseteq \dots \supseteq S_{mj}$. For any $i\in\mathcal{N}$ and $j\in\mathcal{M}$, we define $x^*_{ij}$ as
\begin{equation*}
x^*_{ij}=\sum_{\ell=1}^m q^*_j(S_{\ell j})\dfrac{\bm{1}[i\in S_{\ell j}]}{1+\sum_{s\in S_{\ell j}}v_{sj}}.
\end{equation*}
We define $y^*_{ii'j}=\min\{x^*_{ij},x^*_{i'j}\}$, and define $x_{0j}$ as
\begin{equation*}
x^*_{0j}=\sum_{\ell=1}^m q^*_j(S_{\ell j})\dfrac{1}{1+\sum_{s\in S_{\ell j}}v_{sj}}.
\end{equation*}
Using the same argument as in Step 1 of the proof of \cref{thm:lp_randomized_single_segment} we have
\begin{align*}
&x^*_{ij}\leq x^*_{0j},\ \forall i\in\mathcal{N},\, \forall j\in\mathcal{M},\\
&x^*_{0j}+\sum_{i\in\mathcal{N}}v_{ij}x^*_{ij}=1,\ \forall j\in\mathcal{M},\\
&\sum_{j\in\mathcal{M}}\sum_{i\in\mathcal{N}}\theta_jr_iv_{ij}x^*_{ij}=\sum_{j\in\mathcal{M}}\sum_{S\subseteq\mathcal{N}}\theta_jR_j(S)q^*_j(S),\\
&x^*_{ij}+\sum_{i'\in\mathcal{N}}v_{i'j}y^*_{ii'j}=\sum_{S\subseteq\mathcal{N}}q^*_j(S)\cdot \bm{1}[i\in S],\ \forall i\in\mathcal{N},\, \forall j\in\mathcal{M}.
\end{align*}
By the last equation we have that for any $k\in\{1,2,\dots,K\}$
\begin{equation*}
\sum_{j\in\mathcal{M}}\sum_{i\in C_k}\theta_j\left(x^*_{ij}+\sum_{i'\in\mathcal{N}}v_{i'j}y^*_{ii'j}\right)=\sum_{j\in\mathcal{M}}\sum_{S\subseteq\mathcal{N}}\theta_j|S\cap C_k|q^*_j(S)\geq \ell_k.
\end{equation*}
Therefore we conclude that $(\mathbf{x}^*,\mathbf{y}^*)$ is feasible to~\eqref{prob:lp_randomized_multi_segment}, and the optimal objective value of~\eqref{prob:lp_randomized_multi_segment}~is greater or equal to~\eqref{prob:randomized_multi_segment}.

\underline{\textbf{Step 2.} \eqref{prob:randomized_multi_segment}$\geq$\eqref{prob:lp_randomized_multi_segment}}
Let $(\mathbf{x}^*,\mathbf{y}^*)$ be an optimal solution to~\eqref{prob:lp_randomized_multi_segment}. For each $j\in\mathcal{M}$, we define $q^*_j(\cdot)$ using~\eqref{eq:solution_recover_multi_segment}. Since setting $y^*_{ii'j}=\min\{x^*_{ij},x^*_{i'j}\}$ does not change the objective value and feasibility, we assume without loss of generality that $y^*_{ii'j}=\min\{x^*_{ij},x^*_{i'j}\}$. Using the same argument as in Step 2 of proof of~\cref{thm:lp_randomized_single_segment}, we have that $q^*_j(\cdot)$ is a probability mass function for each $j\in\mathcal{M}$, and
\begin{align*}
&\sum_{\ell=1}^n q^*_j(S_{\ell j})\cdot \bm{1}[i\in S_{\ell j}]=x^*_{ij}+\sum_{i'\in\mathcal{N}}v_{i'j}y^*_{ii'j},\ \forall i\in\mathcal{N},\, \forall j\in\mathcal{M},\\
&\sum_{j\in\mathcal{M}}\sum_{i\in\mathcal{N}}\theta_jr_iv_{ij}x^*_{ij}=\sum_{j\in\mathcal{M}}\sum_{S\subseteq\mathcal{N}}\theta_jR_j(S)q^*_j(S).
\end{align*}
By the first equation above we have that for any $k\in\{1,2,\dots,K\}$
\begin{equation*}
\sum_{j\in\mathcal{M}}\sum_{S\subseteq\mathcal{N}}\theta_jq^*_j(S)|S\cap C_k|=\sum_{j\in\mathcal{M}}\theta_j\sum_{i\in\mathcal{N}}\left(x^*_{ij}+\sum_{i'\in\mathcal{N}}v_{i'j}y^*_{ii'j}\right)\geq\ell_k.
\end{equation*}
Thus we conclude that $\{q_j(\cdot)\}_{j\in\mathcal{M}}$ is feasible to~\eqref{prob:randomized_multi_segment}, and the optimal objective value of~\eqref{prob:randomized_multi_segment}~is greater or equal to that of~\eqref{prob:lp_randomized_multi_segment}. 

Combining the  two steps we conclude that the optimal objective value of~\eqref{prob:randomized_multi_segment}~equals to that of~\eqref{prob:lp_randomized_multi_segment}. 
\end{proof}

{ 

\section{Assortment Customization over Finite Number of Customers}\label{sec:assortment_customization}

In this section, we consider the assortment customization problem over $T$ homogeneous customers. We assume that there are $T$ customers indexed by $1,2,\dots,T$, and the choice of each customer follows the same MNL model. The decision of the seller is to customize an assortment $S_t$ to each customer $t\in\{1,2,\dots,T\}$. We assume that products are categorized into $K$ categories $C_1,\dots,C_K$, and each category $C_k$ is associated with a minimum threshold $\ell_k$. In this problem, we require that for every $k\in\{1,2,\dots,K\}$, the total number of times products in $C_k$ are offered to customers exceeds $\ell_k$, i.e., $\sum_{t=1}^T|S_t\cap C_k|\geq \ell_k$. The formal definition of the problem is given by
\begin{equation}
\label{prob:assortment_customization}
\begin{aligned}
&\max_{\{S_t\}_{t=1}^T}&&\sum_{t=1}^TR(S_t)\\
&\text{s.t.}&& \sum_{t=1}^T|S_t\cap C_k|\geq \ell_k,\ \forall k\in\{1,2,\dots,K\}.
\end{aligned}
\end{equation}

We present a rounding-based approximation algorithm for Problem~\eqref{prob:assortment_customization}. We first solve the following problem
\begin{equation}
\label{prob:relaxed_assortment_customization}
\begin{aligned}
&\max_{q(\cdot)}&&\sum_{S\subseteq\mathcal{N}} q(S)R(S)\\
&\text{s.t.}&&\sum_{S\subseteq\mathcal{N}}q(S)=1,\\
&&&q(S)\geq 0,\ \forall S\subseteq\mathcal{N},\\
&&&\sum_{S\subseteq\mathcal{N}}q(S)|S\cap C_k|\geq \ell_k/T,\ \forall k\in\{1,2,\dots,K\}.
\end{aligned}
\end{equation}
By Theorem~\ref{thm:lp_randomized_single_segment}, Problem~\eqref{prob:relaxed_assortment_customization}~can be solved in polynomial time via solving an equivalent linear program. Furthermore, by Proposition~\ref{lemma:nested_support}, there exists an optimal solution $q^*$ to Problem~\eqref{prob:relaxed_assortment_customization}~that only offers a nested sequence of assortments $S^{(1)}\supseteq S^{(2)}\supseteq\dots\supseteq S^{(n_0)}$ with positive probability for some $n_0\in\{1,2,\dots,n\}$. 

Based on $q^*$, we construct a feasible solution $\{\tilde{S}_{t}\}_{t=1}^T$ to Problem~\eqref{prob:assortment_customization} as follows. For any $i\in \{1,2,\dots,n_0\}$, we define $\tau_i$ as
\begin{equation*}
\tau_i=\left\lceil \sum_{j=1}^i Tq^*(S^{(j)})\right\rceil.
\end{equation*}
We further define $\tau_0=0$. Then for any $t\in\{1,2,\dots,T\}$, we define $\tilde{S}_t=S^{(i)}$ if $\tau_{i-1}<t\leq \tau_{i}$.

We use the following theorem to show that $\{\tilde{S}_t\}_{t=1}^T$ is feasible to Problem~\eqref{prob:assortment_customization}, and the solution is asymptotically optimal as the number of customers $T$ gets large.

\begin{theorem}\label{thm:approximation_customization}
The sequence of assortments $\{\tilde{S}_{t}\}$ is feasible to Problem~\eqref{prob:assortment_customization}, and attains at least $1-(nv_{\max}+1)/Tv_{\min}$ fraction of the optimal solution of Problem~\eqref{prob:assortment_customization}.
\end{theorem}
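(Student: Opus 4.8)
The plan is to prove the two assertions separately: first that $\{\tilde S_t\}$ is feasible, and then that its revenue is within the stated factor of optimality, the latter through an upper bound coming from the relaxation together with a bound on the revenue lost to rounding. Throughout I write $q_i^\ast=q^\ast(S^{(i)})$, $p_i=Tq_i^\ast$, and partial sums $P_i=\sum_{j\le i}p_j$, so that $\tau_i=\lceil P_i\rceil$ and the number of customers receiving $S^{(i)}$ is $n_i=\tau_i-\tau_{i-1}$. Writing $D_i=\lceil P_i\rceil-P_i\in[0,1)$ we have $n_i=p_i+D_i-D_{i-1}$, with $D_0=0$ and (since the support probabilities sum to one, giving $P_{n_0}=T$) also $D_{n_0}=0$. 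For feasibility I would compute $\sum_{t=1}^T|\tilde S_t\cap C_k|=\sum_{i=1}^{n_0} n_i\,|S^{(i)}\cap C_k|$. Because the chain is nested, $|S^{(i)}\cap C_k|$ is nonincreasing in $i$, so summation by parts (using $D_0=D_{n_0}=0$) yields $\sum_i n_i|S^{(i)}\cap C_k|\ge\sum_i p_i|S^{(i)}\cap C_k|=T\sum_{S}q^\ast(S)|S\cap C_k|\ge T\cdot(\ell_k/T)=\ell_k$, which is exactly the covering constraint of \eqref{prob:assortment_customization}.

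For the optimality bound, the first step is to certify that $Tz^\ast$ upper bounds the optimum of \eqref{prob:assortment_customization}, where $z^\ast$ is the optimal value of \eqref{prob:relaxed_assortment_customization}. Given any feasible $\{S_t\}$, its empirical distribution $q(S)=\tfrac1T|\{t:S_t=S\}|$ satisfies $\sum_S q(S)|S\cap C_k|=\tfrac1T\sum_t|S_t\cap C_k|\ge \ell_k/T$, so $q$ is feasible for the relaxation and has objective $\tfrac1T\sum_t R(S_t)$; hence $\sum_t R(S_t)\le Tz^\ast$, i.e.\ $\mathrm{OPT}\le Tz^\ast$. (That $z^\ast$ is attained on a nested chain, and is computable, is exactly Theorem~\ref{thm:lp_randomized_single_segment} and Lemma~\ref{lemma:nested_support}.) It therefore suffices to show $\sum_t R(\tilde S_t)\ge\bigl(1-\tfrac{nv_{\max}+1}{Tv_{\min}}\bigr)Tz^\ast$, since $1-\tfrac{nv_{\max}+1}{Tv_{\min}}\ge 0$ in the nontrivial regime and $Tz^\ast\ge\mathrm{OPT}$.

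The heart of the argument is bounding the rounding loss. Applying the same summation-by-parts identity to the revenues gives $\sum_t R(\tilde S_t)=\sum_i n_i R(S^{(i)})=Tz^\ast+\sum_{i=1}^{n_0-1}D_i\bigl(R(S^{(i)})-R(S^{(i+1)})\bigr)$, so the shortfall is at most $\sum_{i=1}^{n_0-1}D_i\bigl(R(S^{(i+1)})-R(S^{(i)})\bigr)_+$. Here I would exploit the MNL revenue formula \eqref{eq:revenue_mnl} along the nested chain: writing $W_i=\sum_{j\in S^{(i)}}v_j$, a direct manipulation of $R(S)=\tfrac{\sum_{i\in S}r_iv_i}{1+\sum_{i\in S}v_i}$ with $S^{(i+1)}\subseteq S^{(i)}$ gives the per-step estimate $\bigl(R(S^{(i+1)})-R(S^{(i)})\bigr)_+\le R(S^{(i+1)})\tfrac{W_i-W_{i+1}}{1+W_i}$. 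Using $D_i<1$, the telescoping bound $\sum_i (W_i-W_{i+1})\le W_1\le nv_{\max}$, and a lemma bounding the revenue of any assortment appearing in the support by the relaxation value (of the form $R(S^{(i)})\le \tfrac{nv_{\max}+1}{v_{\min}}\,z^\ast$, which follows from $1+W(S)\le nv_{\max}+1$ and $W(S)\ge v_{\min}$ for nonempty $S$), I would assemble the shortfall bound $\le \tfrac{nv_{\max}+1}{v_{\min}}z^\ast$ and conclude.

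The main obstacle is precisely this last estimate. Feasibility and the relaxation upper bound are routine, but the revenues $R(S^{(i)})$ are \emph{not} monotone along the nested chain, so the naive bound $\sum_i(R(S^{(i+1)})-R(S^{(i)}))_+$ on the positive variation is not controlled by a single maximum revenue, and bounding $|n_i-p_i|<1$ termwise loses an extra factor of $n$. The delicate point is to combine the cumulative cancellation captured by the $D_i$ factors (which survive summation by parts with $D_0=D_{n_0}=0$) with the MNL structure so that exactly the factor $\tfrac{nv_{\max}+1}{v_{\min}}$ emerges, rather than a weaker $n$-dependent or $r_{\max}$-dependent constant; this is where I expect the real work to lie.
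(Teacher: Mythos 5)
Your feasibility argument and the relaxation upper bound $\mathrm{OPT}\le Tz^*$ are correct and coincide with the paper's proof, and your Abel-summation identity for the shortfall, $Tz^*-\sum_t R(\tilde S_t)=\sum_{i=1}^{n_0-1}D_i\bigl(R(S^{(i+1)})-R(S^{(i)})\bigr)$, is exactly the right starting point. The gap is in what you do next. You assert that the revenues $R(S^{(i)})$ need not be monotone along the nested chain and therefore try to control the positive variation through MNL weight differences; but the paper proves (and the argument needs) precisely the opposite: for the \emph{optimal} nested solution $q^*$ one has $R(S^{(i+1)})\ge R(S^{(i)})$ for every $i$. The proof is a one-line exchange: if $R(S^{(i_0+1)})<R(S^{(i_0)})$, move the mass $q^*(S^{(i_0+1)})$ onto the larger assortment $S^{(i_0)}$; by nestedness coverage can only increase, so feasibility is preserved, while the objective strictly increases, contradicting optimality of $q^*$. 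With this monotonicity every term $D_i\bigl(R(S^{(i+1)})-R(S^{(i)})\bigr)$ is nonnegative and at most $R(S^{(i+1)})-R(S^{(i)})$, so the shortfall telescopes to $R(S^{(n_0)})-R(S^{(1)})\le r_{\max}$, and no weight estimates are needed at all.

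Your substitute route also does not assemble quantitatively. The per-step estimate $\bigl(R(S^{(i+1)})-R(S^{(i)})\bigr)_+\le R(S^{(i+1)})\frac{W_i-W_{i+1}}{1+W_i}$ is correct, and the lemma $R(S^{(i)})\le\frac{nv_{\max}+1}{v_{\min}}z^*$ can be made true, but not from the two inequalities you cite: it requires observing that the highest-revenue product may be assumed to lie in every support assortment, whence $z^*\ge r_{\max}v_{\min}/(1+nv_{\max})$, combined with $R(S)\le r_{\max}$. Even granting it, multiplying these pieces with your telescoping bound $\sum_i(W_i-W_{i+1})\le nv_{\max}$ gives a shortfall of order $\frac{(nv_{\max}+1)\,nv_{\max}}{v_{\min}}z^*$, and the sharper estimate $\sum_i\frac{W_i-W_{i+1}}{1+W_i}\le\ln(1+nv_{\max})$ still leaves a logarithmic factor over the required $\frac{nv_{\max}+1}{v_{\min}}z^*$. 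Finally, the ``$r_{\max}$-dependent constant'' you dismiss as too weak is exactly what the paper uses: the shortfall bound $r_{\max}$ yields the stated ratio because $\mathrm{OPT}\ge Tr_{\max}v_{\min}/(1+nv_{\max})$ (the highest-revenue product belongs to every optimal assortment and is purchased with probability at least $v_{\min}/(1+nv_{\max})$), so the $r_{\max}$ cancels in the ratio.
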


Theorem~\ref{thm:approximation_customization}~states that the approximation ratio of $\{\tilde{S}_t\}_{t=1}^T$ converges to $1$ as the number of customers $T$ increases. This indicates that $\{\tilde{S}_t\}_{t=1}^T$ is asymptotically optimal as $T$ gets large. Since in practice the number of customers $T$ the seller customize assortments over is usually large, the asymptotic optimality of $\{\tilde{S}_t\}_{t=1}^T$ suggests that the solution can perform well in practice.

To prove Theorem~\ref{thm:approximation_customization}, we first prove the following lemma, which shows that the optimality gap of $\{\tilde{S}_t\}_{t=1}^T$ is uniformly bounded by $r_{\max}$.

\begin{lemma}\label{lemma:integrality_gap}
The sequence of assortments $\{\tilde{S}_{t}\}_{t=1}^T$ is a feasible solution to Problem~\eqref{prob:assortment_customization}, and its expected revenue differs from the optimal value of Problem~\eqref{prob:assortment_customization}~by at most $r_{\max}$.
\end{lemma}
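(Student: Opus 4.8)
The plan is to bound the optimality gap of the rounded solution $\{\tilde S_t\}_{t=1}^T$ against the fractional relaxation~\eqref{prob:relaxed_assortment_customization}. First I would observe that the relaxation yields a valid upper bound after scaling: given any feasible $\{S_t\}$ for~\eqref{prob:assortment_customization}, the empirical distribution $q(S)=\frac1T|\{t: S_t=S\}|$ is feasible for~\eqref{prob:relaxed_assortment_customization} with objective $\frac1T\sum_t R(S_t)$, so $\mathrm{OPT}\le T\cdot Z_r$, where $Z_r$ is the optimal value of~\eqref{prob:relaxed_assortment_customization}. Writing $n_j=T\,q^*(S^{(j)})$ for the fractional number of customers assigned to $S^{(j)}$, we get $TZ_r=\sum_{j=1}^{n_0} n_j R(S^{(j)})$ (the empty assortment contributes nothing, as $R(\varnothing)=0$), while the rounded revenue is $\sum_t R(\tilde S_t)=\sum_{j=1}^{n_0} b_j R(S^{(j)})$ with $b_j=\tau_j-\tau_{j-1}$. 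Since $\{\tilde S_t\}$ will be feasible, it lies below $\mathrm{OPT}$, so it suffices to establish the one-sided bound $\sum_t R(\tilde S_t)\ge TZ_r-r_{\max}\ge \mathrm{OPT}-r_{\max}$.

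For \textbf{feasibility}, the key is that along the nested chain $S^{(1)}\supseteq\cdots\supseteq S^{(n_0)}$ the coverage quantities $a_j:=|S^{(j)}\cap C_k|$ are non-increasing in $j$, while the rounded partial sums dominate the fractional ones: $\sum_{i\le j} b_i=\tau_j=\lceil\sum_{i\le j} n_i\rceil\ge\sum_{i\le j} n_i$. A summation-by-parts identity then gives $\sum_j b_j a_j=\sum_{j<n_0}(\sum_{i\le j} b_i)(a_j-a_{j+1})+(\sum_{i\le n_0} b_i)a_{n_0}\ge\sum_j n_j a_j=T\sum_j q^*(S^{(j)})|S^{(j)}\cap C_k|\ge\ell_k$, where the last inequality is the relaxed covering constraint. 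I would also check that all customers can be assigned, i.e.\ $\tau_{n_0}=\lceil T\sum_j q^*(S^{(j)})\rceil\le T$ (since $\sum_j q^*(S^{(j)})\le1$), with any leftover customers receiving $\varnothing$.

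The \textbf{revenue bound} is the crux. Let $f_j:=\tau_j-\sum_{i\le j} n_i=\lceil\sum_{i\le j} n_i\rceil-\sum_{i\le j} n_i\in[0,1)$ be the accumulated rounding surplus, with $f_0=0$. Abel summation yields
\[
\sum_{j=1}^{n_0} n_j R(S^{(j)})-\sum_{j=1}^{n_0} b_j R(S^{(j)})=\sum_{j=1}^{n_0-1} f_j\big(R(S^{(j+1)})-R(S^{(j)})\big)-f_{n_0} R(S^{(n_0)}).
\]
The naive estimate $|b_j-n_j|<1$ only bounds this by $n_0\,r_{\max}$, which is too weak; the improvement to a single $r_{\max}$ comes from exploiting that the $f_j$ never accumulate (they stay in $[0,1)$) together with \emph{revenue monotonicity along the chain}. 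I would first argue that an optimal nested solution may be taken with $R(S^{(1)})\le R(S^{(2)})\le\cdots\le R(S^{(n_0)})$: if $R(S^{(j)})\ge R(S^{(j+1)})$ for some $j$, then shifting the mass $q^*(S^{(j+1)})$ onto the larger set $S^{(j)}$ weakly increases every coverage term (as $S^{(j)}\supseteq S^{(j+1)}$) and weakly increases revenue, so $S^{(j+1)}$ can be dropped; iterating makes the revenues increasing along the chain. Given this, each $R(S^{(j+1)})-R(S^{(j)})\ge0$ and $f_j<1$, so the displayed gap is at most $\sum_{j<n_0}\big(R(S^{(j+1)})-R(S^{(j)})\big)-f_{n_0}R(S^{(n_0)})=(1-f_{n_0})R(S^{(n_0)})-R(S^{(1)})\le R(S^{(n_0)})\le r_{\max}$. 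Combining, $\sum_t R(\tilde S_t)\ge TZ_r-r_{\max}\ge\mathrm{OPT}-r_{\max}$, and since $\{\tilde S_t\}$ is feasible the two values differ by at most $r_{\max}$.

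The \textbf{main obstacle} I anticipate is exactly this last step: controlling the rounding loss without letting the per-block errors accumulate over the (up to $n$) chain elements. Its resolution hinges on the two facts that the cumulative surpluses $f_j$ are uniformly bounded by $1$ and that the chain is revenue-monotone; establishing the latter by the mass-shifting exchange argument above is the delicate part, since it must simultaneously preserve feasibility of the covering constraints, which is precisely where nestedness (and hence $S^{(j)}\supseteq S^{(j+1)}$) is exploited.
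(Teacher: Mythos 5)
Your proposal is correct and follows essentially the same route as the paper's proof: the relaxation upper bound via the empirical distribution, feasibility via summation-by-parts using nestedness and $\tau_j \ge \sum_{i\le j} n_i$, the Abel-summation identity for the rounding gap, and revenue monotonicity along the nested chain established by the mass-shifting exchange argument, yielding the telescoping bound of at most $r_{\max}$. The only cosmetic difference is that you retain the term $-f_{n_0}R(S^{(n_0)})$ to handle leftover mass on $\varnothing$, whereas the paper normalizes so that $\tau_{n_0}=T$; both resolutions are valid.
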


\begin{proof}
We first verify the feasibility of $\{\tilde{S}_t\}_{t=1}^T$. For notational brevity we define $S^{(n_0+1)}=\varnothing$. For any $k\in\{1,2,\dots,K\}$, we have
\begin{align*}
&\sum_{t=1}^T|\tilde{S}_t\cap C_k|=\sum_{i=1}^{n_0} (\tau_i-\tau_{i-1})|S^{(i)}\cap C_k|=\sum_{i=1}^{n_0} \tau_i|S^{(i)}\cap C_k|-\sum_{i=1}^{n_0}\tau_{i-1}|S^{(i)}\cap C_k|\\
&=\sum_{i=1}^{n_0} \tau_i|S^{(i)}\cap C_k|-\sum_{i=1}^{n_0}\tau_i|S^{(i+1)}\cap C_k|=\sum_{i=1}^{n_0} \tau_i(|S^{(i)}\cap C_k|-|S^{(i+1)}\cap C_k|)\\
&\geq\sum_{i=1}^{n_0} (|S^{(i)}\cap C_k|-|S^{(i+1)}\cap C_k|)\sum_{j=1}^i Tq^*(S^{(j)}).
\end{align*}
Here the inequality is due to $S^{(i)}\supseteq S^{(i+1)}$ and $\tau_i\geq \sum_{j=1}^i Tq(S^{(i)})$. We further have
\begin{align*}
&\sum_{i=1}^{n_0} (|S^{(i)}\cap C_k|-|S^{(i+1)}\cap C_k|)\sum_{j=1}^i Tq^*(S^{(j)})=T\sum_{j=1}^{n_0}\sum_{i=j}^{n_0} (|S^{(i)}\cap C_k|-|S^{(i+1)}\cap C_k|)q^*(S^{(j)})\\
&=T\sum_{j=1}^{n_0} |S^{(j)}\cap C_k|q^*(S^{(j)})=T\sum_{S\subseteq\mathcal{N}}|S\cap C_k|q^*(S)\geq T\cdot \ell_k/T=\ell_k.
\end{align*}
Therefore we have $\sum_{t=1}^T|\tilde{S}_t\cap C_k|\geq \ell_k$ for all $k\in\{1,2,\dots,K\}$, thus $\{\tilde{S}_t\}_{t=1}^T$ is feasible to Problem~\eqref{prob:assortment_customization}.

Then we prove an upper bound on the optimality gap of $\{\tilde{S}_t\}_{t=1}^T$. We first prove that the optimal value of Problem~\eqref{prob:assortment_customization}~is upper bounded by $T$ times that of Problem~\eqref{prob:relaxed_assortment_customization}. Let $\{S^*_t\}_{t=1}^T$ be an optimal solution to Problem~\eqref{prob:assortment_customization}. For any $S\subseteq\mathcal{N}$, we define $n(S)$ as the number of times assortment $S$ is offered in $\{S_t^*\}_{t=1}^T$, i.e.,
\begin{equation*}
n(S)=|\{t\in\{1,2,\dots,T\}:S_t^*=S\}|.
\end{equation*}
We further define $\tilde{q}(S)=n(S)/T$ for all $S\subseteq\mathcal{N}$. Since by definition $\sum_{S\subseteq\mathcal{N}}n(S)=T$, we have \mbox{$\sum_{S\subseteq\mathcal{N}}\tilde{q}(S)=1$.} For any $k\in\{1,2,\dots,K\}$, we have
\begin{equation*}
\sum_{S\subseteq\mathcal{N}}\tilde{q}(S)|S\cap C_k|=\sum_{S\subseteq\mathcal{N}}n(S)|S\cap C_k|/T=\sum_{t=1}^T |S_k^*\cap C_k|/T\geq \ell_k/T.
\end{equation*}
Therefore $\tilde{q}$ is a feasible solution to Problem~\eqref{prob:relaxed_assortment_customization}. Since $q^*$ is an optimal solution to Problem~\eqref{prob:relaxed_assortment_customization}, we have
\begin{equation}\label{eq:customization_upper_bound}
\sum_{t=1}^T R(S_t^*)=\sum_{S\subseteq\mathcal{N}}n(S)R(S)=T\sum_{S\subseteq\mathcal{N}}\tilde{q}(S)R(S)\leq T\sum_{S\subseteq\mathcal{N}} q^*(S)R(S).
\end{equation}
We also have
\begin{align*}
&\sum_{t=1}^T R(\tilde{S}_t)=\sum_{i=1}^{n_0} R(S^{(i)})(\tau_{i}-\tau_{i-1})=\sum_{i=1}^{n_0} \tau_iR(S^{(i)})-\sum_{i=1}^{n_0} \tau_{i-1}R(S^{(i)})\\
&=\sum_{i=1}^{n_0} \tau_iR(S^{(i)})-\sum_{i=1}^{n_0-1}\tau_iR(S^{(i+1)})=\tau_{n_0}R(S^{(n_0)})-\sum_{i=1}^{n_0-1}\tau_i(R(S^{(i+1)})-R(S^{(i)}))\\
&=TR(S^{(n_0)})-\sum_{i=1}^{n_0-1}\tau_i(R(S^{(i+1)})-R(S^{(i)})).
\end{align*}
Here the last equality is due to $\sum_{i=1}^{n_0} q^*(S^{(i)})=1$, thus $\tau_n=\lceil T\sum_{i=1}^{n_0} q^*(S^{(i)})\rceil=T$. Similarly we get
\begin{align*}
T\sum_{S\subseteq\mathcal{N}}q^*(S)R(S)=&T\sum_{i=1}^{n_0} q^*(S^{(i)})R(S^{(i)})=T\sum_{i=1}^{n_0} R(S^{(i)})\Big(\sum_{j=1}^i q^*(S^{(j)})-\sum_{j=1}^{i-1}q^*(S^{(j)})\Big)\\
=&T\sum_{i=1}^{n_0} R(S^{(i)})\sum_{j=1}^i q^*(S^{(j)})-T\sum_{i=2}^{n_0} R(S^{(i)})\sum_{j=1}^{i-1}q^*(S^{(j)})\\
=&T\sum_{i=1}^{n_0} R(S^{(i)})\sum_{j=1}^i q^*(S^{(j)})-T\sum_{i=1}^{n_0-1} R(S^{(i+1)})\sum_{j=1}^{i}q^*(S^{(j)})\\
=&TR(S^{(n_0)})\sum_{j=1}^{n}q^*(S^{(j)})-\sum_{i=1}^{n_0-1}(R(S^{(i+1)})-R^{(i)})\cdot \sum_{j=1}^i Tq^*(S^{(j)}).
\end{align*}
Therefore we have
\begin{equation}\label{eq:optimality_gap_bound}
T\sum_{S\subseteq\mathcal{N}}q^*(S)R(S)-\sum_{t=1}^T R(\tilde{S}_t)=\sum_{i=1}^{n_0-1}(R(S^{(i+1)})-R(S^{(i)}))\Big(\tau_{i}-\sum_{j=1}^i q^*(S^{(j)})\Big).
\end{equation}

Then we prove that $R(S^{(i+1)})\geq R(S^{(i)})$ for all $i\in\{1,2,\dots,n_0-1\}$. Suppose there exists \mbox{$i_0\in\{1,2,\dots,n_0-1\}$} such that $R(S^{(i_0+1)})<R(S^{(i_0)})$. Then we define a distribution over assortments $\hat{q}$ as $\hat{q}(S^{(i_0)})=q^*(S^{(i_0)})+q^*(S^{(i_0+1)})$, $\hat{q}(S^{(i_0+1)})=0$, and $\hat{q}(S)=q^*(S)$ for all $S\neq S^{(i_0)}, S^{(i_0+1)}$. It is easy to verify that $\hat{q}$ is a valid probability mass function over assortments. Since in the definition of $\hat{q}$, we are replacing $S^{(i_0+1)}$ in $q^*$ with a larger assortment $S^{(i_0)}$, $\hat{q}$ should still satisfy the covering constraints in Problem~\eqref{prob:relaxed_assortment_customization}. Therefore $\hat{q}$ is also feasible to Problem~\eqref{prob:relaxed_assortment_customization}. We further have
\begin{equation*}
\begin{split}
&\sum_{S\subseteq\mathcal{N}}\hat{q}(S)R(S)-\sum_{S\subseteq\mathcal{N}}q^*(S)R(S)\\
&=(q^*(S^{(i_0)})+q^*(S^{(i_0+1)}))R(S^{(i_0)})-q^*(S^{(i_0)})R(S^{(i_0)})-q^*(S^{(i_0+1)})R(S^{(i_0+1)})\\
&=q^*(S^{(i_0+1)})(R(S^{(i_0)})-R(S^{(i_0+1)}))>0.
\end{split}
\end{equation*}
This contradicts with the optimality of $q^*$. Therefore $R(S^{(i+1)})\geq R(S^{(i)})$ for all \mbox{$i\in\{1,\dots,n_0-1\}$.}

By definition we have $0\leq \tau_i-T\sum_{j=1}^i q^*(S^{(j)})\leq 1$, and we have proven that $R(S^{(i+1)})\geq R(S^{(i)})$ for all $i\in\{1,2,\dots,n_0-1\}$. Therefore by~\eqref{eq:optimality_gap_bound}~we have
\begin{equation}\label{eq:customization_upper_bound_2}
T\sum_{S\subseteq\mathcal{N}}q^*(S)R(S)-\sum_{t=1}^T R(\tilde{S}_t)\leq \sum_{i=1}^{n_0-1}(R(S^{(i+1)})-R(S^{(i)}))=R(S^{(n_0)})-R(S^{(1)})\leq R(S^{(n_0)})\leq r_{\max}.
\end{equation}
Combining \eqref{eq:customization_upper_bound} and \eqref{eq:customization_upper_bound_2} we have
\begin{equation*}
\sum_{t=1}^TR(S_t^*)-\sum_{t=1}^T R(\tilde{S}_t)\leq r_{\max}.
\end{equation*}
Therefore we conclude that the optimality gap of $\{\tilde{S}_t\}_{t=1}^T$ is upper bounded by $r_{\max}$.
\end{proof}

With Lemma~\ref{lemma:integrality_gap}, we are able to finish the proof of Theorem~\ref{thm:approximation_customization}.

\begin{proof}[Proof of Theorem~\ref{thm:approximation_customization}]
Assume without loss of generality that product $1$ is the product with the highest revenue. Since under the MNL model, adding the product with the highest revenue to the assortment always increases the expected revenue and does not violate the covering constraints, we have $1\in S_t^*$ for all $t\in\{1,2,\dots,T\}$. Therefore
\begin{equation*}
\sum_{t=1}^T R(S_t^*)\geq \sum_{t=1}^T r_{\max}\phi(1,S_t^*)\geq Tr_{\max}\dfrac{v_{\min}}{1+nv_{\max}}.
\end{equation*}
Here the second inequality is because for any $i\in S$, $\phi(i,S)\geq v_{\min}/(1+nv_{\max})$. We have proven in Lemma~\ref{lemma:integrality_gap}~that $\sum_{t=1}^T R(\tilde{S}_t)\geq \sum_{t=1}^T R(S_t^*)-r_{\max}$, therefore we have
\begin{equation*}
\dfrac{\sum_{t=1}^T R(S_t^*)-\sum_{t=1}^T R(\tilde{S}_t)}{\sum_{t=1}^T R(S_t^*)}\leq \dfrac{r_{\max}}{Tr_{\max}v_{\min}/(1+nv_{\max})}=\dfrac{1+nv_{\max}}{Tv_{\min}}.
\end{equation*}
This further implies
\begin{equation*}
\dfrac{\sum_{t=1}^T R(\tilde{S}_t)}{\sum_{t=1}^T R(S_t^*)}\geq 1-\dfrac{1+nv_{\max}}{Tv_{\min}},
\end{equation*}
and we conclude that $\{\tilde{S}_t\}_{t=1}^T$ is a $1-(1+nv_{\max})/Tv_{\min}$-approximation to Problem~\eqref{prob:assortment_customization}.
\end{proof}

}

\section{Proof of \cref{thm:hardness_multi_segment}} \label{sec:thm:hardness_multi_segment}

We prove the theorem by a reduction from the maximum independent set problem. We start by presenting the maximum independent set problem.

\noindent\textbf{Maximum independent set.} Given a graph $G=(V,E)$ with vertex set indexed by $V=\{1,2,\dots,n\}$. We say a subset of vertices $S\subseteq V$ is an independent set if any two vertices in $S$ are not adjacent to each other. The goal is to find an independent set in $G$ with maximum size. The problem is NP-hard to approximate within a factor of $\Omega(1/n^{1-\epsilon})$ for all $\epsilon>0$ unless $NP\subseteq BPP$.
 
 Consider any instance $G=(V,E)$ of the maximum independent set problem. We index $V$ by $\{1,2,\dots,m\}$. We construct an instance of~\eqref{prob:deterministic_multi_segment} as follows. We index the set of customer segments by $\mathcal{M}=\{1,2,\dots,m\}$ and the set of products by $\mathcal{N}=\{1,2,\dots,m+1\}$. Revenue $r_i$ and preference weights $v_{ij}$ are defined as
\begin{align*}
		&r_{m+1}=2,\ r_i=m^{-2}/2,\ \forall i\in\{1,2,\dots,m\},\\
		&v_{m+1,j}=1,\ v_{jj}=4m^2,\ v_{ij}=0,\ \forall j\in\{1,2,\dots,m\},\ i\neq j.
	\end{align*}
We set $K=|E|$, and denote $E=\{(i_k,j_k)\}_{k=1}^K$. We set the product categories as follows: $C_k=\{i_k,j_k\}$, and the thresholds $\ell_k=2-1/m$ for all $k\in\{1,2,\dots,K\}$. We further set the arrival probabilities  as $\theta_j=1/m$ for all $j\in\{1,2,\dots,m\}$.

		
For any independent set $A\subseteq V$, we set $S_j=\mathcal{N}\backslash\{j\}$ if $j\in A$, and $S_j=\mathcal{N}$ if $j\notin A$. We claim that $\{S_j\}_{j=1}^m$ is a feasible solution to \eqref{prob:deterministic_multi_segment} and $\sum_{j=1}^m \theta_jR_j(S_j)\geq |A|/m$.
		
To prove feasibility of $\{S_j\}_{j=1}^m$, it suffices to prove that for all $(i,i')\in E$,
\begin{equation*}
	\sum_{j=1}^m \bm{1}[i\in S_j]+\sum_{j=1}^m\bm{1}[i'\in S_j]\geq 2m-1.
\end{equation*}
Since $i\in S_j$ for all $j\neq i$ and $i'\in S_j$ for all $j\neq i'$, the inequality above is equivalent to \mbox{$\bm{1}[i\in S_i]+\bm{1}[i'\in S_{i'}]\geq 1$}, or equivalently, either $i\in S_i$ or $i'\in S_{i'}$. Since by definition $A$ is an independent set, and $(i,i') \in E$, then at least one of $i$ and $i'$ does not belong to $A$, which implies that $i\in S_i$ or $i'\in S_{i'}$ and therefore
$\{S_j\}_{j=1}^m$ is a feasible solution to~\eqref{prob:deterministic_multi_segment}.
		
Furthermore, we have
\begin{equation*}
	\sum_{j=1}^n\theta_j R_j(S_j)\geq \sum_{j\in A}R_j(S_j)/m=  \sum_{j\in A}R_j( {\cal N} \setminus j)/m =   |A|/m.
\end{equation*}
		
Conversely, we claim that for any feasible solution $\{S_j\}_{j=1}^m$ to~\eqref{prob:deterministic_multi_segment}, there exists an independent set $A\subseteq V$ such that $|A|\geq\lfloor m\sum_{j=1}^m \theta_jR_j(S_j)\rfloor$. Consider any feasible solution $\{S_j\}_{j=1}^m$ to~\eqref{prob:deterministic_multi_segment}. We set $A=\{i\in V:i\notin S_i\}$. 
		
First we prove that $A$ is an independent set. Since $\{S_j\}_{j=1}^m$ is a feasible solution, for every $(i,i')\in E$,
\begin{equation*}
	\sum_{j=1}^m \bm{1}[i\in S_j]+\sum_{j=1}^m \bm{1}[i'\in S_j]\geq 2m-1,
\end{equation*}
which implies that $\bm{1}[i\in S_i]+\bm{1}[i\in S_{i'}]\geq 1$. In other words, for every $(i,i')\in E$, we can't have $i \notin S_i$ and $i' \notin S_{i'}$ at the same time, which means that at least one of $i$ and $i'$ does not belong to $A$,
thus $A$ is an independent set.
		
We further prove that $|A|\geq \lfloor m\sum_{j=1}^m \theta_jR_j(S_j)\rfloor$.
For $j\in A$, we have $R_j(S_j)=1$ if $m+1\in S_j$, and $R_j(S_j)=0$ if $m+1\notin S_j$. Thus $R_j(S_j)\leq 1$ for all $j\in A$. 

For $j\notin A$, if $m+1\notin S_j$, we have $R_j(S_j)<m^{-2}/2<m^{-2}$ because  the revenue of all products except $m+1$ is equal to $m^{-2}/2$.  If $m+1\in S_j$, then we have
\begin{equation*}
R_j(S_j)=\dfrac{r_{m+1}v_{m+1}+r_jv_j}{1+v_{m+1}+v_j}=\dfrac{2+m^{-2}/2\cdot 4m^2}{1+1+4m^2}\leq \dfrac{1}{m^2}.
\end{equation*}
Therefore
\begin{equation*}
	m\sum_{j=1}^m \theta_jR_j(S_j)=\sum_{j\in A}R_j(S_j)+\sum_{j\notin A}R_j(S_j)\leq|A|+m\cdot\dfrac{1}{m^2}=|A|+\dfrac{1}{m},
\end{equation*}
which implies $|A|\geq \lfloor m\sum_{j=1}^m R_j(S_j)\rfloor$.
		
Therefore, if $A^*$ is the optimal independent set and $R^*$ is the optimal value of~\eqref{prob:deterministic_multi_segment}, then we get $\lfloor mR^*\rfloor\leq |A^*|\leq mR^*$. Consequently, an $\alpha$-approximation to~\eqref{prob:deterministic_multi_segment}~implies a $\Omega(\alpha)$-approximation to the maximum independent set problem.
By~\cite{feige1996interactive}, it is NP-hard to approximate the maximum set cover problem within a factor of $\Omega(1/m^{1-\epsilon})$ for all $\epsilon>0$, it is also NP-hard to approximate~\eqref{prob:deterministic_multi_segment}~within a factor of $\Omega(1/m^{1-\epsilon})$ for all $\epsilon>0$.

{ 

\section{Numerical Experiments on Synthetic Instances}\label{sec:numerical_synthetic}

In this section, we consider conduct numerical experiments using synthetic instances to test the performance of our algorithm for \eqref{prob:deterministic_single_segment}.

\noindent\underline{\bf Experimental Setup:} We use the following steps to generate our instances. In all of our instances, we set the number of product to be $n=200$. The revenue $r_i$ of each product $i$ is drawn from an exponential distribution with mean $1$, and the preference weight $v_i$ of each product $i$ is drawn from a uniform distribution over $[1,5]$. To generate covering constraints, we first randomly generate $K=3K_0$ categories, where we vary $K_0$ in the experiments. The categories are divided into three types, each type containing $K_0$ categories. The first type of categories are indexed by $\{1,2,\dots,K_0\}$, and every product is added to each type-one category with probability $\alpha$. The second type of categories are indexed by $\{K_0+1,K_0+2,\dots,2K_0\}$. Every product whose revenue is higher than median revenue is added to each type-two category with probability $\alpha$, while products with revenue lower than median revenue are not added to type-two categories. The third type of categories are indexed by $\{2K_0+1,2K_0+2,\dots,3K_0\}$. Every product whose revenue is lower than median revenue is added to each type-three category with probability $\alpha$, while products with revenue higher than median revenue are not added to type-three categories. We vary the parameter $\alpha$ in the experiments. Here we generate type-two and type-three categories because we would like products to be categorized based on revenue, i.e., some categories consist solely of high-revenue products while others consist solely of low-revenue products. Then we construct minimum thresholds $\ell_k$ for each category $C_k$, given by $\ell_k=\lceil \beta U_k|C_k|\rceil$, where $U_k$ is drawn from a uniform distribution over $[0,1]$ for all $k\in\{1,2,\dots,K\}$, and we also vary the parameter $0<\beta<1$ in the experiments.

\noindent\underline{\bf Benchmark Heuristics:} We compare the performance of our algorithm with the following two heuristics.

\noindent\textbf{Heuristic 1:} Let $\tilde{S}_k$ be the set of $\ell_k$ products from $C_k$ with the highest revenues. The heuristic first finds the unconstrained optimal assortment $S_{\text{OPT}}$, then returns $\bigcup_{k=1}^K \tilde{S}_k\cup S_{\text{OPT}}$.

\noindent\textbf{Heuristic 2:} The heuristic first finds $\tilde{S}=\bigcup_{k=1}^K \tilde{S}_k$, where $\tilde{S}_k$ is defined in Heuristic 1 above, and then returns an optimal expansion of $\tilde{S}$.

We measure the performance of our algorithm and the two heuristics by the approximation ratio compared to the exact optimal value of \eqref{prob:deterministic_single_segment}. In our experiments, the optimal value of \eqref{prob:deterministic_single_segment} is computed by solving the following equivalent integer program

\begin{equation*}
\begin{aligned}
&\max_{\mathbf{x}\in\mathbb{R}^{n+1}}&&\sum_{i\in\mathcal{N}}r_iv_ix_i\\
&\text{s.t.}&& x_0+\sum_{i\in\mathcal{N}}v_ix_i=1,\\
&&& x_i\in\{0,x_0\},\ \forall i\in\mathcal{N},\\
&&& \sum_{i\in C_k}x_i\geq \ell_kx_0,\ \forall k\in\{1,2,\dots,K\}.
\end{aligned}
\end{equation*}

Varying $K_0\in\{10,20\}$, $\alpha\in\{0.2,0.4,0.6\}$, $\beta\in\{0.2,0.5\}$, we obtain $12$ parameter configurations. Under each parameter configuration, we randomly generate $100$ instances, and compute the mean approximation ratios of our algorithm as well as two heuristics under the generated instances.

\begin{table}[!ht]
\centering
\begin{tabular}{|c|c c c||c|c c c|}
\hline
$(K_0,\alpha,\beta)$&ALG&Heur.1&Heur.2&$(K_0,\alpha,\beta)$&ALG&Heur.1&Heur.2\\
\hline
$(10,0.2,0.2)$&0.893&0.774&0.787&$(10,0.2,0.5)$&0.900&0.791&0.797\\
$(10,0.4,0.2)$&0.886&0.778&0.780&$(10,0.4,0.5)$&0.897&0.787&0.788\\
$(10,0.6,0.2)$&0.884&0.788&0.789&$(10,0.6,0.5)$&0.902&0.794&0.794\\
\hline
$(20,0.2,0.2)$&0.887&0.767&0.770&$(20,0.6,0.5)$&0.900&0.779&0.780\\
$(20,0.4,0.2)$&0.883&0.780&0.779&$(20,0.4,0.5)$&0.899&0.779&0.779\\
$(20,0.6,0.2)$&0.885&0.790&0.790&$(20,0.6,0.5)$&0.902&0.786&0.786\\
\hline
\end{tabular}
\caption{Mean approximation ratios of our algorithm and two heuristics under synthetic instances}
\label{table:mean_approx_ratio}
\end{table}

\underline{\textbf{Results:}} We present the mean approximation ratios of our algorithm and two heuristics under the synthetic instances in Table~\ref{table:mean_approx_ratio}. From Table~\ref{table:mean_approx_ratio}~one can see that in all tested parameter settings, the mean approximation ratio of our algorithm is around $0.9$, while the corresponding approximation ratio guarantees established in Theorem~\ref{thm:single_segment} are $1/(\log 30+2)\approx 0.18$ when $K=30$, and $1/(\log 60+2)\approx 0.16$ when $K=60$. This suggests that under the tested instances, Algorithm~\ref{alg:single_segment}~can often achieve a much better approximation ratio than its theoretical approximation ratio guarantee established in Theorem~\ref{thm:single_segment}. When comparing the performance of our algorithm with the two heuristics, one can observe that under all tested parameter settings, the assortments returned by Algorithm~\ref{alg:single_segment} attains an expected revenue more than $10\%$ higher than those returned by the two heuristics. This suggests that Algorithm~\ref{alg:single_segment}~is able to achieve a better empirical performance than the two heuristics under synthetic instances. 

}

{ 

\section{Joint Assortment Optimization and Product Framing with Covering Constraints}

In this section, we consider the joint assortment optimization and product framing problem under the MNL model with covering constraints. 

\subsection{Problem Formulation}
In this problem, the seller has $G$ positions indexed by $\mathcal{G}=\{1,2,\dots,G\}$, each of which can include one product. Customer's browsing probabilities are given by $\{\beta_g\}_{g\in\{1,2,\dots,G\}}$. With probability $\beta_g$, a customer browses from position $1$ through position $g$, and makes a purchase decision following the MNL model under the assortment offered in the first $g$ positions.

The decision of the seller is to decide a product framing $X:\mathcal{G}\to\mathcal{N}\cup\{0\}$ that maps from positions $\mathcal{G}$ to products. Under product framing $X$, the product placed at position $g$ is $X(g)$ for each $g\in\mathcal{G}$. When no product is placed at position $g$, we set $X(g)=0$. We use $X(g_1:g_2)$ to denote the set of products offered in positions $g_1$ through $g_2$ under product framing $X$. With probability $\beta_g$, a customer browses through positions $1$ through $g$, observes assortment $X(1:g)$, and generates an expected revenue of $R(X(1:g))$. Therefore the total expected revenue of a product framing $X$ is given by 
\begin{equation*}
\mathcal{R}(X)=\sum_{g=1}^G \beta_gR(X(1:g)). 
\end{equation*}
We further require that the products that appear in the product framing should satisfy the covering constraints, i.e., $|X(1:G)\cap C_k|\geq \ell_k$ for all $k\in\{1,2,\dots,K\}$. The formal definition of the problem is given by

\begin{equation}
\label{prob:framing}
\begin{aligned}
&\max_{X:\mathcal{G}\to\mathcal{N}\cup\{0\}} && \mathcal{R}(X)\\
&\text{s.t.} && |X(1:G)\cap C_k|\geq \ell_k,\ \forall k\in\{1,2,\dots,K\}.
\end{aligned}
\end{equation}

\subsection{Approximation Algorithm}

Before introducing our algorithm for Problem \eqref{prob:framing}, we first make the following assumptions.

\begin{assumption}\label{assump:monotone_browsing_distribution}
The browsing probabilities satisfy $\beta_g\leq \beta_{g'}$ for all $g\geq g'$.
\end{assumption}

\begin{assumption}\label{assump:enough_position}
The total number of positions $G$ satisfies $G\geq 3n$.
\end{assumption}

Assumption~\ref{assump:monotone_browsing_distribution}~states that the browsing probability $\beta_g$ is monotonically decreasing, while Assumption~\ref{assump:enough_position}~states that there are at least $3n$ positions available for product framing. Both assumptions are reasonable in practice. Customers' attention are usually limited to top-ranked products, resulting in a decreasing browsing probability. Furthermore, online platforms usually have a large number of positions available for products.

We construct $n$ candidate solutions $\{X_i\}_{i=1}^n$ for Problem~\eqref{prob:framing} as follows. For any $i\in\{1,2,\dots,n\}$, we define $S_i^*$ as an optimal solution to the following constrained assortment optimization problem
\begin{equation}
\label{prob:mnl_cardinality}
\begin{aligned}
&\max_{S\subseteq\mathcal{N}}&&R(S)&&\text{s.t.}&& |S|\leq i.
\end{aligned}
\end{equation}
\cite{rusmevichientong2010dynamic} shows that Problem~\eqref{prob:mnl_cardinality}~can be solved in polynomial time. We further define $\hat{S}$ as a approximate solution of \eqref{prob:deterministic_single_segment} we obtain using Algorithm \ref{alg:single_segment}. Then for any $i\in\{1,2,\dots,n\}$, we define a product framing $X_i$ by positioning $S_i^*$ in the first $|S_i^*|$ positions (in arbitrary order) and $\hat{S}$ in the last $|\hat{S}|$ positions, and leaving other positions empty. We further define $Y$ as the candidate solution with the maximum expected revenue, given by
\begin{equation}\label{eq:best_candidate_solution}
Y=\underset{X\in\{X_i\}_{i=1}^n}{\mathrm{argmax}}\,\mathcal{R}(X).
\end{equation}

\begin{theorem}
Under Assumptions~\ref{assump:monotone_browsing_distribution}~and~\ref{assump:enough_position}, $Y$ defined in \eqref{eq:best_candidate_solution} provides a $1/2\log_2(8n)$-approximation to Problem~\eqref{prob:framing}.
\end{theorem}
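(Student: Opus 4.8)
The plan is to relate the best candidate framing $Y$ to an optimal framing $X^*$ through the optimal values of the cardinality-constrained problems \eqref{prob:mnl_cardinality}. Write $\rho_k := R(S_k^*)$ for the best expected revenue achievable with at most $k$ products; since a cardinality-$k$ assortment is feasible for budget $k+1$, the sequence $\rho_1\le\rho_2\le\dots\le\rho_n=R^*$ is nondecreasing, where $R^*=\max_{S\subseteq\mathcal N}R(S)$. First I would check that each $X_i$ is feasible for \eqref{prob:framing}: it contains $\hat S$, which satisfies the covering constraints by \cref{thm:single_segment}, and it fits since $|S_i^*|+|\hat S|\le n+n=2n\le G$ by \cref{assump:enough_position}. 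Because the positions between $S_i^*$ and $\hat S$ are empty, the prefix assortment stabilizes, $X_i(1:g)=S_i^*$ for every $g$ in the plateau $[\,|S_i^*|,\,G-|\hat S|\,]$, so discarding the remaining (nonnegative) terms yields the lower bound $\mathcal R(X_i)\ge \rho_i\sum_{g=|S_i^*|}^{G-|\hat S|}\beta_g$.

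For the upper bound, since $X^*$ is a framing its prefix assortments $A_g:=X^*(1:g)$ are nested with $|A_g|\le g$, hence $R(A_g)\le\rho_{\min(g,n)}$ and $\mathcal R(X^*)=\sum_{g=1}^G\beta_g R(A_g)\le\sum_{g=1}^G\beta_g\rho_{\min(g,n)}$. The crucial structural fact I would establish is the subadditivity of $R$: for any partition $S=S_1\sqcup S_2$ one has $R(S)\le R(S_1)+R(S_2)$, which follows immediately by writing $R(S)=\tfrac{a_1+a_2}{1+b_1+b_2}$ with $a_j=\sum_{i\in S_j}r_iv_i$ and $b_j=\sum_{i\in S_j}v_i$ and using $\tfrac{a_1+a_2}{1+b_1+b_2}\le\tfrac{a_1}{1+b_1}+\tfrac{a_2}{1+b_2}$. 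Splitting an optimal budget-$2k$ assortment into two halves of size at most $k$ then gives the doubling bound $\rho_{2k}\le 2\rho_k$, which is exactly what lets dyadic candidate indices approximate every $\rho_g$ to within a factor of two.

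With these pieces in place, the final step is a dyadic charging argument. Let $I$ be the set of powers of two not exceeding $n$, together with $n$ itself, so $|I|\le\log_2 n+2$. For each position $g\le n$ I would charge $\beta_g R(A_g)$ to the candidate $X_i$ with $i=i^-(g)$ the largest power of two at most $g$: since $i\le g\le 2i$ the doubling bound gives $R(A_g)\le\rho_g\le\rho_{2i}\le 2\rho_i$, and $g$ lies in the plateau of $X_i$ because $|S_i^*|\le i\le g\le G-|\hat S|$; summing over the disjoint blocks $g\in[i,2i)$ gives $\sum_{g\le n}\beta_gR(A_g)\le 2\sum_{i\in I}\mathcal R(X_i)$. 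For $g>n$ I would use $R(A_g)\le R^*=\rho_n$ and charge to $X_n$, where monotonicity of $\{\beta_g\}$ (\cref{assump:monotone_browsing_distribution}) together with $G\ge 3n$ ensures the at most $|\hat S|\le n$ tail positions reserved for $\hat S$ carry no more browsing weight than an equal-length earlier block, so that $\sum_{g>n}\beta_gR(A_g)\le 2\,\mathcal R(X_n)$. Bounding each $\mathcal R(X_i)$ and $\mathcal R(X_n)$ by $\mathcal R(Y)=\max_i\mathcal R(X_i)$ then gives $\mathcal R(X^*)\le 2(|I|+1)\,\mathcal R(Y)$, and since $|I|+1\le\log_2 n+3=\log_2(8n)$ this is precisely the claimed $1/(2\log_2(8n))$ guarantee. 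I expect the main obstacle to be the bookkeeping that simultaneously (i) matches each dyadic index to positions inside its plateau, so that $X_i$ genuinely realizes $\rho_i$ there, and (ii) controls the browsing weight lost on the final positions that must be set aside for $\hat S$; both are exactly what \cref{assump:monotone_browsing_distribution,assump:enough_position} are designed to handle.
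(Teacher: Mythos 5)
Your proposal is correct, and it takes a genuinely different route from the paper's proof. The paper works directly with the optimal framing $X^*$: after de-duplicating it, it partitions the positions into dyadic blocks, defines for each block $\ell$ a framing $\hat{X}_\ell$ that retains only that block's products at their original positions, and uses the MNL monotonicity $\phi(i,S')\geq \phi(i,S)$ for $i\in S'\subseteq S$ to show $\mathrm{OPT}\leq \sum_\ell \mathcal{R}(\hat{X}_\ell)$; it then compares only the \emph{best} block to the candidate $X_i$ whose index $i$ equals that block's length. You instead decouple the optimal framing entirely through the scalar sequence $\rho_k=R(S_k^*)$: the bound $\mathcal{R}(X^*)\leq\sum_{g}\beta_g\rho_{\min(g,n)}$ reduces everything to properties of $\rho$, and your key lemma --- subadditivity of $R$ under partitions, hence the doubling bound $\rho_{2k}\leq 2\rho_k$ --- lets you charge each position additively to the largest dyadic index below it. The two key facts are cousins (your subadditivity follows from the same $\phi$-monotonicity the paper invokes), but the decompositions built on them differ structurally: the paper's blocks depend on $X^*$, yours do not. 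Your route buys a cleaner, reusable intermediate object (the doubling property of cardinality-constrained MNL optima), avoids the de-duplication step, and replaces the best-block selection with a charging argument; the paper's route stays closer to the positional structure of the framing. Both proofs handle the tail $g>n$ identically (monotone $\beta_g$ plus $G\geq 3n$ yield the factor $2$), count the same $\log_2 n + O(1)$ dyadic scales, and land on the same constant. Two cosmetic points: feasibility of the candidates follows from the construction in Algorithm~\ref{alg:single_segment} (its loop exits only once all covering constraints hold) rather than from Theorem~\ref{thm:single_segment} itself; and your final count $2(|I|+1)$ double-counts $X_n$, which already lies in $I$ --- harmless slack, since $2|I|\leq 2(|I|+1)\leq 2\log_2(8n)$.
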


\begin{proof}

Let $X^*:\mathcal{G}\to\mathcal{N}\cup\{0\}$ be an optimal solution and $\text{OPT}$ be the optimal value of Problem~\eqref{prob:framing}. We define $\hat{X}$ as
\begin{equation*}
\hat{X}(g)=\begin{cases}
X^*(g),\ &\text{if}\ X^*(i)\notin X^*(1:g-1);\\
0,\ &\text{otherwise}.
\end{cases}
\end{equation*}
In other words, in the definition of $\hat{X}$, we remove duplicate products by only keeping each products in the position where it first appears. By definition of $\hat{X}$ we have $i\in \hat{X}(1:g)$ implies $i\in X^*(1:g)$. Furthermore, if $i\in X^*(1:g)$, then the first position where product $i$ appears is among positions $1$ through $g$, thus $i\in \hat{X}(1:g)$. Therefore we conclude that $X^*(1:g)=\hat{X}(1:g)$ for all $g\in\{1,2,\dots,G\}$, thus $\text{OPT}=\mathcal{R}(X^*)=\mathcal{R}(\hat{X})$. We further have
\begin{equation}\label{eq:framing_1}
\begin{split}
\text{OPT}&=\sum_{g=1}^G\beta_gR(\hat{X}(1:g))=\sum_{g=1}^G\beta_g\sum_{i\in \hat{X}(1:g)}r_i\phi(i,\hat{X}(1:g))\\
&=\sum_{g=1}^G\beta_g\sum_{g'=1}^g r_{\hat{X}(g')}\phi(\hat{X}(g'),\hat{X}(1:g))\\
&=\sum_{g'=1}^G\sum_{g=g'}^G \beta_g r_{\hat{X}(g')}\phi(\hat{X}(g'),\hat{X}(1:g)).
\end{split}
\end{equation}
Here for notational brevity we define $r_0=0$, and the third equality holds because by definition of $\hat{X}$, every product appears at most once in the product framing $\hat{X}$. We set $\ell_{\max}=\lceil\log_2(n)\rceil$, and define
\begin{equation*}
g_0=0,\ g_\ell=2^{\ell-1},\ \forall \ell\in\{1,2,\dots,\ell_{\max}\},\ g_{\ell_{\max}+1}=n,\ g_{\ell_{\max}+2}=G.
\end{equation*}

For any $\ell\in\{0,1,\dots,\ell_{\max}+1\}$, we define $\hat{X}_\ell$ as the product framing where positions $g_\ell+1$ through $g_{\ell+1}$ are kept same as $\hat{X}$ while all other positions are left empty, i.e.,
\begin{equation*}
\hat{X}_\ell(g)=\begin{cases}
\hat{X}(g),\ &\text{if}\ g_{\ell}+1\leq g\leq g_{\ell+1},\\
0,\ &\text{otherwise}.
\end{cases}
\end{equation*}
Then by~\eqref{eq:framing_1}~we get
\begin{align*}
\text{OPT}=&\sum_{g'=1}^G\sum_{g=g'}^G \beta_g r_{\hat{X}(g')}\phi(\hat{X}(g'),\hat{X}(1:g))=\sum_{\ell=0}^{\ell_{\max}+1}\sum_{g'=g_{\ell}+1}^{g_{\ell+1}}\sum_{g=g'}^G \beta_g r_{\hat{X}(g')}\phi(\hat{X}(g'),\hat{X}(1:g))\\
\leq&\sum_{\ell=0}^{\ell_{\max}+1}\sum_{g'=g_\ell+1}^{g_{\ell+1}}\sum_{g=g'}^G\beta_g r_{\hat{X}(g')}\phi(\hat{X}(g'),\hat{X}(g_\ell+1:\min\{g,g_{\ell+1}\}))=\sum_{\ell=0}^{\ell_{\max}+1}\mathcal{R}(\hat{X}_\ell).
\end{align*}
Here the inequality is because under the MNL model, $\phi(i,S)\geq \phi(i,S')$ if $i\in S$ and $S\subseteq S'$. We define $\ell^*=\mathrm{argmax}_{\ell}\,\mathcal{R}(\hat{X}_\ell)$, then we have
\begin{equation*}
\mathcal{R}(\hat{X}_{\ell^*})\geq \dfrac{1}{\ell_{\max}+2}\sum_{\ell=0}^{\ell_{\max}+1}\mathcal{R}(\hat{X}_\ell)\geq \dfrac{\text{OPT}}{\ell_{\max}+2}.
\end{equation*}

Then we prove that there exists $i\in\{1,2,\dots,n\}$ such that $\mathcal{R}(X_i)\geq \mathcal{R}(\hat{X}_{\ell^*})/2$. If $\ell^*\neq 0$ and $\ell^*\neq \ell_{\max}+1$, we set $i=g_{\ell^*+1}-g_{\ell^*}$, then
\begin{equation*}
\mathcal{R}(\hat{X}_{\ell^*})=\sum_{g=g_{\ell^*}+1}^{G}\beta_g R(\hat{X}(g_{\ell^*}+1:\min\{g,g_{\ell^*+1}\}))\leq \sum_{g=g_{\ell^*}+1}^{G}\beta_g R(S_i^*)\leq \sum_{g=i}^G\beta_gR(S_i^*).
\end{equation*}
Here the first inequality is due to $|\hat{X}(g_{\ell^*}+1:\min\{g,g_{\ell^*+1}\})|\leq g_{\ell^*+1}-g_{\ell^*}=i$, thus by definition of $S_i^*$ we have $R(S_i^*)\geq R(\hat{X}(g_{\ell^*}+1:g))$ for all $g_{\ell^*}+1\leq g\leq G$. The second inequality is due to the assumption that $\ell^*\neq \ell_{\max}+1$ and $\ell^*\neq 0$, thus $i=g_{\ell^*+1}-g_{\ell^*}\leq g_{\ell^*}$. Furthermore, since the approximate solution $\hat{S}$ of \eqref{prob:deterministic_single_segment} satisfies $|\hat{S}|\leq n$, under product framing $X_i$, if a customer browses up to $g$ for some $i\leq g\leq G-n$, the set of products offered to the customer is $S_i^*$. Therefore
\begin{equation*}
\mathcal{R}(X_i)\geq \sum_{g=i}^{G-n}\beta_g R(S_i^*).
\end{equation*}
We further have
\begin{equation*}
\sum_{g=G-n+1}^G \beta_g\leq \sum_{g=G-2n+1}^{G-n}\beta_g\leq \sum_{g=i}^{G-n}\beta_g.
\end{equation*}
Here the first inequality is because by Assumption~\ref{assump:monotone_browsing_distribution}~we get $\beta_g\leq \beta_{g-n}$ for all $G-n+1\leq g\leq G$. The second inequality is because we have assumed $\ell^*\neq \ell_{\max}+1$, thus $i=g_{\ell^*+1}-g_{\ell^*}\leq g_{\ell^*+1}\leq n$, and by Assumption~\ref{assump:enough_position}~we have $G-2n+1\geq n\geq i$. Therefore we have $\sum_{g=i}^{G-n}\beta_g\leq \sum_{g=i}^G\beta_g/2$. Then we get
\begin{equation*}
\mathcal{R}(X_i)\geq \sum_{g=i}^{G-n}\beta_g R(S_i^*)\geq \dfrac{1}{2}\sum_{g=i}^G \beta_gR(S_i^*)\geq \dfrac{1}{2}\mathcal{R}(\hat{X}_{\ell^*})\geq \dfrac{\text{OPT}}{2(\ell_{\max}+2)}.
\end{equation*}

If $\ell^*=0$, since by definition $g_1=1$, we have
\begin{equation*}
\mathcal{R}(\hat{X}_0)=\sum_{g=1}^G \beta_g R(\{\hat{X}_0(1)\})=R(\{\hat{X}(1)\})\cdot\sum_{g=1}^G\beta_g=R(\{\hat{X}(1)\})\leq R(S_1^*).
\end{equation*}
On the other hand, we have
\begin{equation*}
\mathcal{R}(X_1)\geq \sum_{g=1}^{G-n}\beta_gR(S_1^*).
\end{equation*}
By Assumption~\ref{assump:monotone_browsing_distribution}~we also have
\begin{equation*}
\sum_{g=G-n+1}^G\beta_g\leq \sum_{g=G-2n+1}^{G-n}\beta_g\leq \sum_{g=1}^{G-n}\beta_g.
\end{equation*}
Therefore we have $\sum_{g=1}^{G-n}\beta_g\geq \sum_{g=1}^G\beta_g/2$, therefore
\begin{equation*}
\mathcal{R}(X_1)\geq \sum_{g=1}^{G-n}\beta_g R(S_1^*)\geq \dfrac{1}{2}\sum_{g=1}^G \beta_gR(S_1^*)\geq \mathcal{R}(\hat{X}_0)\geq \dfrac{\text{OPT}}{2(\ell_{\max}+2)}.
\end{equation*}

If $\ell^*=\ell_{\max}+1$, then we have
\begin{equation*}
\mathcal{R}(\hat{X}_{\ell^*})=\sum_{g=n+1}^G \beta_gR(\hat{X}(n:g))\leq \sum_{g=n+1}^G\beta_gR(S_n^*).
\end{equation*}
Here the inequality is because $S_n^*$ is the unconstrained optimal assortment, thus $R(S_n^*)\geq R(S)$ for all $S\subseteq\mathcal{N}$. On the other hand, we have
\begin{equation*}
\mathcal{R}(X_n)\geq \sum_{g=n}^{G-n}\beta_g R(S_n^*).
\end{equation*}
We also have
\begin{equation*}
\sum_{g=G-n+1}^{G}\beta_g\leq \sum_{g=G-2n+1}^{G-n}\beta_g\leq \sum_{g=n}^{G-n}\beta_g.
\end{equation*}
Here the first inequality is because by Assumption~\ref{assump:monotone_browsing_distribution}~we get $\beta_g\leq \beta_{g-n}$ for all $G-n+1\leq g\leq G$. The second inequality is because by Assumption~\ref{assump:enough_position} we have $G-2n+1\geq n$. Therefore we have $\sum_{g=n}^{G-n}\beta_g\geq \sum_{g=n}^G \beta_g/2$, thus
\begin{equation*}
\mathcal{R}(X_n)\geq \sum_{g=n}^{G-n}\beta_g R(S_n^*)\geq \dfrac{1}{2}\sum_{g=n+1}^G \beta_gR(S_n^*)\geq \dfrac{1}{2}\mathcal{R}(\hat{X}_{\ell^*})\geq \dfrac{\text{OPT}}{2(\ell_{\max}+2)}.
\end{equation*}
Therefore in all three cases, there exists $i\in\{1,2,\dots,n\}$ such that $\mathcal{R}(X_i)\geq \text{OPT}/2(\ell_{\max}+2)$. Since we define $Y=\mathrm{argmax}_{X\in\{X_i\}_{i=1}^n}\mathcal{R}(X)$, we have
\begin{equation*}
\mathcal{R}(Y)\geq \dfrac{\text{OPT}}{2(\ell_{\max}+2)}\geq \dfrac{\text{OPT}}{2\log_2(8n)}.
\end{equation*}

Finally we show that for any $i\in\{1,2,\dots,n\}$, $X_i$ satisfies the covering constraints. Since $\hat{S}$ is always offered at the last $|\hat{S}|$ positions of $X_i$, we get $X_i(1:G)\supseteq \hat{S}$, thus $|X_i(1:G)\cap C_k|\geq |\hat{S}\cap C_k|\geq \ell_k$ for all $k\in\{1,2,\dots,K\}$. Therefore $Y$ satisfies the covering constraints, thus is a $1/2\log(8n)$-approximation to Problem~\eqref{prob:framing}.
\end{proof}

}

\end{appendices}

\end{document}